\newtheorem{theorem}{Theorem}
\newtheorem{lemma}{Lemma}
\newtheorem{corollary}{Corollary}
\newtheorem{proposition}{Proposition}
\theoremstyle{definition}
\newtheorem{definition}{Definition}
\theoremstyle{remark}
\newtheorem{remark}{Remark}
\newtheorem{example}{Example}
\newcommand{\V}{\mathcal V}
\newcommand{\Z}{\mathbb Z}
\newcommand{\R}{\mathbb R}
\title{Crossing tribes of tangles in a thickened surface}
\author{Igor Nikonov}
\date{}
\begin{document}

\maketitle

\begin{abstract}
Crossings of knot diagrams can be divided into classes (tribes) compatible with Reidemeister moves. Tribes can be considered as localization of the notion of weak chord index introduced by M. Xu. In the article we describe tribes of crossings for tangles in a fixed surface and show that crossings are differentiated by their component, order and homotopy types. As a consequence, we conclude that there are no nontrivial indices on diagrams of classical knots.
\end{abstract}


\section*{Introduction}

%
%
%

In 2004 V.~Turaev~\cite{T} assigned an index $n(e)$ to each crossing $e$ of a flat knot diagram. Using the index, he defined a polynomial invariant (index polynomial), see also~\cite{T2}. This polynomial was numerously rediscovered in various forms afterwards~\cite{ILL,Cheng,K2,FK,Kim}. A.~Henrich~\cite{H} presented indices valued in flat link diagrams. Other types of indices can be found in~\cite{Dye,Jeong,Cheng3,KPV,CGX}. The notion of index was axiomatized by Z. Cheng~\cite{Cheng2}, and finally, M. Xu~\cite{Xu} found the general conditions for an index to define an invariant polynomial.

In~\cite{Nbm, Nif} we localized the notion of index and defined tribes as classes of crossings compatible in some sense with the Reidemeister moves (see Definition~\ref{def:crossing_tribe}). It follows from the definition that crossings which belong to one tribe have the same value for any (weak chord) index. The aim of this paper is to describe tribe of crossings and the universal index for knots, links and tangles in a fixed surface.

The paper is organized as follows. Section~\ref{sect:definitions} contains the main definitions of tangles, tribes and indices. Using technics of~\cite{Nwp}, in Section~\ref{sect:tribes_tangles} we prove the main theorem of the paper which shows that a tribe of crossings is determined by the component, order and homotopy types. This result allows to give the negative answer to the question~\cite{CGX,CFGMX} whether nontrivial indices for classical knots exist (we postpone this conclusion to Corollary~\ref{cor:classical_trivial_indices} in Section~\ref{subsect:universal_index}). Section~\ref{sect:tribes_flat_crossings} is devoted to description of tribes for flat tangles in the surface. There we also describe phratries of crossings which in some sense substitute for the sign function of crossings in the situation when we don't have it. In Section~\ref{sect:universal_index_tribe} we find the universal index for tangles and flat tangles in the surface. It appears that in order to pass from tribes to the index we need to factorize the homotopy type by the action of the inner monodromy group. We finish the paper with Section~\ref{sect:virtual_flat_knots} where we describe the universal index for diagrams of minimal genus of virtual and flat links.

\section{Definitions}\label{sect:definitions}

Let $F$ be an oriented compact connected surface with the boundary $\partial F$ (perhaps empty).

\begin{definition}\label{def:tangle}
A \emph{(oriented) tangle} in the thickened surface $F\times (0,1)$ is an embedding $T\colon M\to F\times (0,1)$ of an oriented compact $1$-dimensional manifold $M$ such that $T(\partial M)\subset \partial F\times (0,1)$ and $T$ is transversal to $\partial F$. The image of the map $T$ will be also called a tangle.

A  connected component of a tangle whose boundary is empty is called a \emph{closed component}, otherwise, the components is called \emph{long}.

If a tangle consists of one closed component then it is a \emph{knot}; a tangle consisting of one long component is a \emph{long knot}; a tangle with several component which are all closed, is a \emph{link}.

We consider tangles up to isotopies of $F\times (0,1)$ with the boundary $\partial F\times (0,1)$ fixed.

A \emph{tangle diagram} $D$ is the image of a tangle $T$ in general position under the natural projection $F\times (0,1)\to F$. Combinatorially, a tangle diagram $D$ is an embedded graph in $F$ with vertices of valency $4$ (called \emph{crossings}) and vertices of valency $1$ (forming the \emph{boundary} $\partial D$ of the diagram $D$) such that $D\cap\partial F=\partial D$ and each vertex of valency $4$ carries the structure of under-overcrossing (Fig.~\ref{pic:tangle}).

\begin{figure}[h]
\centering
  \includegraphics[width=0.4\textwidth]{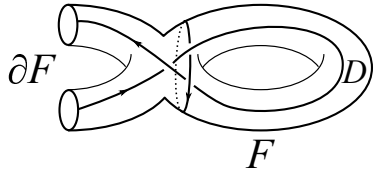}
  \caption{A tangle diagram}\label{pic:tangle}
\end{figure}
\end{definition}

Any tangle diagrams which correspond to isotopical tangles can be connected by a sequence of isotopies of the surface $F$ identical on $\partial F$, and \emph{Reidemeister moves} (Fig.~\ref{pic:reidmove}).

\begin{figure}[h]
\centering
  \includegraphics[width=0.8\textwidth]{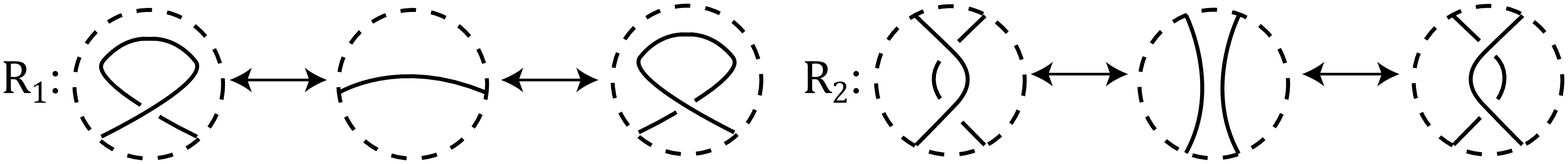}\\ \vbox{\phantom{1em}}
  \includegraphics[width=0.3\textwidth]{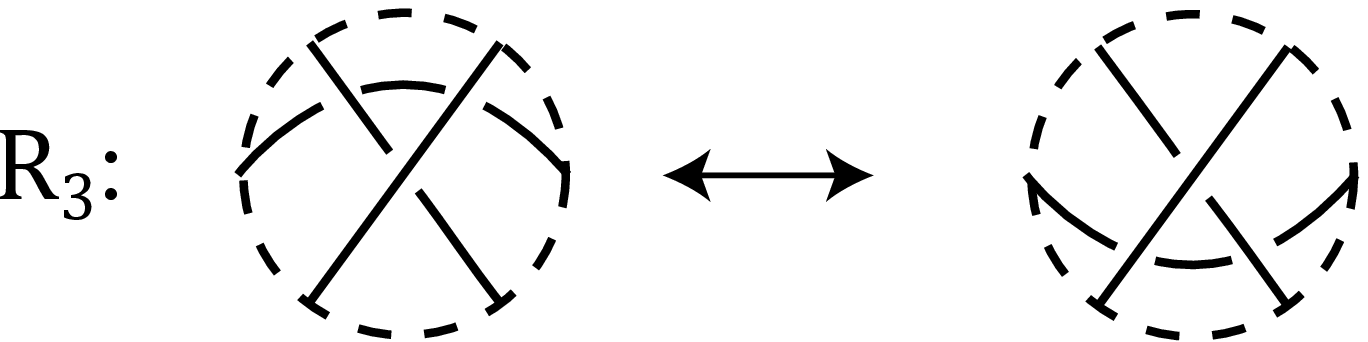}
  \caption{Reidemeister moves}\label{pic:reidmove}
\end{figure}

Let $D$ be a tangle diagram. Denote the set of its crossings by $\V(D)$. We assume that all tangle diagrams are oriented, hence, any crossing $v\in\V(D)$ has a \emph{sign} (Fig.~\ref{pic:crossing_sign}).

\begin{figure}[h]
\centering\includegraphics[width=0.25\textwidth]{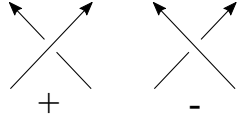}
\caption{The sign of a crossing}\label{pic:crossing_sign}
\end{figure}

Let $D=D_1\cup\cdots\cup D_n$ be a diagram of a tangle $T$ and $v$ be a self-crossing of some component $D_i$. Then the oriented smoothing at $v$ splits the component $D_i$ into two the \emph{left half} $D^l_v$ and the \emph{right half} $D^r_v$  (Fig.~\ref{pic:knot_halves}). We define also the \emph{signed halves} $D^\pm_v$ by the formula $D^{sgn(v)}_v=D^l_v$ and $D^{-sgn(v)}_v=D^r_v$.

\begin{figure}[h]
\centering\includegraphics[width=0.4\textwidth]{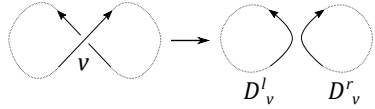}
\caption{The left and the right halves of the diagram}\label{pic:knot_halves}
\end{figure}

Note that $D^+_v$ is the half of $D_i$ from the undercrossing of $v$ to the overcrossing of $v$, and $D^-_v$ is the half from the overcrossing of $v$ to the undercrossing of $v$.

\subsection{Tribes and indices}

Let $T$ be a tangle and $\mathfrak T$ be the set of its diagram. The set $\mathfrak T$ can be considered as the objects of a diagram category whose morphisms are compositions of isotopies and Reidemeister moves.

Given tangle diagrams $D$ and $D'$ and a morphism $f\colon D\to D'$ between them, there is a correspondence $f_*\colon\V(D)\to\V(D')$ between the crossings of the diagrams. The map $f_*$ is a partial bijection between the crossing sets because some crossing can disappear or appear when $f$ is a first or second Reidemeister move.

\begin{remark}
Below we will often denote the crossing $f_*(v)$ in the diagram $D'$ which corresponds to a crossing $v$ in $D$, by the same letter $v$ when the morphism $f$ is clear from the context.
\end{remark}

Recall the notion of a tribal system~\cite{Nbm,Nif}.

\begin{definition}\label{def:crossing_tribe}
Let $T$ be a tangle and $\mathfrak T$ be the category of its diagrams. A \emph{tribal system} on diagrams of the tangle $T$ is a family of partitions $\mathfrak C(D)$ of the sets $\V(D)$, $D\in\mathfrak T$, into subsets $C\in\mathfrak C(D)$ called \emph{tribes} such that
\begin{itemize}
  \item[(T0)] Let $f\colon D\to D'$ be a Reidemeister move, $v_1,v_2\in\V(D)$ and $v'_1=f_*(v_1)$ and $v'_2=f_*(v_2)$ be the correspondent crossings in $D'$. Then if $v_1$ and $v_2$ belong to one tribe in $\mathfrak C(D)$ then $v'_1$ and $v'_2$ also belong to one tribe in $\mathfrak C(D')$;
  \item[(T2)] If $f\colon D\to D'$ is a decreasing second Reidemeister move and $v_1$ and $v_2$ are the disappearing crossings then $v_1$ and $v_2$ belong to one tribe in $\mathfrak C(D)$.
\end{itemize}
\end{definition}

Below in the paper, we will always consider the \emph{finest tribal system} $\mathfrak C^u$ on the diagram category $\mathfrak T$, i.e. the equivalence relations on the crossing sets $\V(D)$, $D\in\mathfrak T$, generated by the properties (T0) and (T2).

For crossings $v_1,v_2\in\V(D)$, $D\in\mathfrak T$, we denote $v_1\sim v_2$ if the crossings $v_1,v_2$ belong to one tribe.

The sign function splits any tribe $C\in\mathfrak C(D)$, $D\in\mathfrak T$, into two subsets $C^+$ and $C^-$ we call \emph{phratries}. By definition
\[
C^\pm=\{v\in C\mid sgn(v)=\pm 1\}.
\]
The phratries $C^+$ and $C^-$ are called \emph{dual}.

The notion of tribes is a localization of the notion a (weak chord) index defined by M. Xu~\cite{Xu} (see also~\cite{Cheng2} for the original axiomatic definition of chord index, and~\cite{CGX,CFGMX} for examples of chord indices).

\begin{definition}\label{def:index}
Let $I$ be an arbitrary set. An \emph{index on the diagrams of a tangle $T$ in the surface $F$ with coefficients in the set $I$} is a map $\iota$ which assigns some value $\iota(v)$ to any crossing $v$ in any diagram $D$ of the tangle $T$ and possesses the following properties:
\begin{itemize}
\item[(I0)] for any Reidemeister move $f\colon D\to D'$ and any crossings $v\in\V(D)$ and $v'\in\V(D')$ such that $v'=f_*(v)$, one has $\iota(v)=\iota(v')$;
\item[(I2)] $\iota(v_1)=\iota(v_2)$ for any crossings $v_1,\,v_2\in\V(D)$ to which a decreasing second Reidemeister move can be applied.
\end{itemize}
\end{definition}

\begin{example}
Let $K$ be a knot in the surface $F$. For any diagram $D$ of $K$ and any crossing $v\in\V(D)$, define its index to be the intersection number of two cycles: $Ind(v)=D\cdot D^+_v\in\Z$. Then the map $Ind$ obeys the conditions (I0) and (I2).
\end{example}

The following statement follows from the definitions.
\begin{proposition}\label{prop:index_and_tribes}
Let $\iota$ be an index on the diagrams of a tangle $T$ in the surface $F$ with coefficients in a set $I$. Let $v$ and $w$ be two vertices of a diagram $D$ of $T$. If $v$ and $w$ belong to one tribe then $\iota(v)=\iota(w)$.
\end{proposition}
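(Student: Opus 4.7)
The plan is to define, for each diagram $D \in \mathfrak{T}$, an equivalence relation $\approx_D$ on $\V(D)$ by declaring $v \approx_D w$ if and only if $\iota(v)=\iota(w)$. This furnishes a family of partitions $\mathfrak{C}^\iota(D)$ of the crossing sets, and I would verify that this family is itself a tribal system in the sense of Definition~\ref{def:crossing_tribe}. Once that is established, the conclusion is automatic: since $\mathfrak{C}^u$ is by construction the \emph{finest} tribal system — i.e. generated as the minimal equivalence relation closed under (T0) and (T2) — every tribe in $\mathfrak{C}^u(D)$ is contained in a single $\approx_D$-class. Thus $v \sim w$ forces $\iota(v)=\iota(w)$.

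The verification of (T0) is essentially a repackaging of (I0). Given a Reidemeister move $f\colon D\to D'$ and crossings $v_1,v_2\in\V(D)$ with $v_i'=f_*(v_i)$ defined, axiom (I0) yields $\iota(v_i')=\iota(v_i)$ for $i=1,2$; hence $v_1\approx_D v_2$, meaning $\iota(v_1)=\iota(v_2)$, implies $\iota(v_1')=\iota(v_2')$, i.e. $v_1'\approx_{D'} v_2'$. The verification of (T2) is even more direct: if $f\colon D\to D'$ is a decreasing second Reidemeister move destroying the crossings $v_1,v_2\in\V(D)$, then (I2) asserts $\iota(v_1)=\iota(v_2)$ outright, which is the definition of $v_1\approx_D v_2$.

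I do not anticipate any technical obstacle — the two sets of axioms (I0)/(I2) and (T0)/(T2) are essentially parallel, and the proposition is really a structural remark about the finest tribal system. The only subtlety worth flagging is making sure the role of ``finest'' is used correctly: one must recognize that any equivalence relation on crossings compatible with (T0) and (T2) refines (in the sense of being coarser than) none other than $\mathfrak{C}^u$, so $\mathfrak{C}^u$ sits below $\mathfrak{C}^\iota$ in the refinement order, which is exactly what is needed to transfer identifications from tribes to index values.
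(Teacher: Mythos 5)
Your argument is correct and is exactly the one the paper leaves implicit (it states the proposition "follows from the definitions," and the same pattern — define $\sim_\iota$ by equality of values, check (T0) and (T2), invoke that $\mathfrak C^u$ is the finest such system — is spelled out in the paper's later proofs, e.g. of Proposition~\ref{prop:closed_selfcrossings}). No gaps; the verification of (T0) from (I0) and of (T2) from (I2) is as routine as you describe.
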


\begin{definition}\label{def:universal_index}
An index $\iota^u$ on the diagrams of a tangle $T$ in the surface $F$ with coefficients in a set $I^u$ is called the \emph{universal index} on diagrams of the tangle $T$ if for any index $\iota$ on $T$ with coefficients in a set $I$ there is a unique map $\psi\colon I^u\to I$ such that $\iota=\psi\circ\iota^u$.
\end{definition}

The main result of the paper is a description of the tribes (and phratries) of the finest tribal system $\mathfrak C$ on diagrams of any tangle $T$ in the surface $F$, and the universal index on $T$.

\section{Crossing tribes of a tangle in the surface $F$}\label{sect:tribes_tangles}

Let $T=K_1\cup K_2\cup\cdots\cup K_n$ be tangle with $n$ components in the surface $F$ and $\mathfrak T$ be its diagram category.

\begin{definition}
Let $D=D_1\cup\cdots\cup D_n$ be a diagram of the tangle $T$ and $v\in\V(D)$ be a crossing of $D$. The \emph{component type} $\tau(v)$ of the crossing $v$ is the pair $(i,j)$ where $D_i$ is the overcrossing component at $v$ and $D_j$ is the undercrossing component at $v$ (Fig.~\ref{pic:crossing_type}).

\begin{figure}[h]
\centering\includegraphics[width=0.1\textwidth]{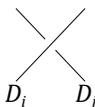}
\caption{A crossing of type $(i,j)$}\label{pic:crossing_type}
\end{figure}

Note that there are $n^2$ types of crossings.
\end{definition}

The following statement is evident.
\begin{proposition}\label{prop:tribe_type}
Let $v_1, v_2\in\V(D)$, $D\in\mathfrak T$. If $v_1\sim v_2$ then $\tau(v_1)=\tau(v_2)$.
\end{proposition}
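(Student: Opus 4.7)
The plan is to observe that the equivalence $\sim$ is by definition the smallest equivalence relation on $\V(D)$ (as $D$ ranges over $\mathfrak T$) compatible with (T0) and generated by (T2), so it is enough to show that the component type $\tau$ is constant on the generators of $\sim$. Concretely, I would verify two things:

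First, for any Reidemeister move $f\colon D\to D'$ and any crossing $v\in\V(D)$ for which $f_*(v)$ is defined, one has $\tau(f_*(v))=\tau(v)$. This is because a Reidemeister move is supported in a small disk $U\subset F$, and a surviving crossing $v$ lies in $D\setminus U$; the germs of the two strands through $v$, together with their under/over data, are unchanged by $f$, so they belong to the same components of the tangle $T=K_1\cup\cdots\cup K_n$ before and after the move. Consequently, if $v_1,v_2\in\V(D)$ both survive and satisfy $\tau(v_1)=\tau(v_2)$, then $\tau(f_*(v_1))=\tau(f_*(v_2))$, which takes care of (T0).

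Second, for (T2), I would check that in a decreasing second Reidemeister move $f\colon D\to D'$ the two disappearing crossings $v_1,v_2$ always share their component type. Indeed, an R2 configuration consists of two arcs $\alpha,\beta$ such that $\alpha$ lies entirely above $\beta$; if $\alpha\subset D_i$ and $\beta\subset D_j$, then at each of $v_1,v_2$ the overcrossing strand belongs to $D_i$ and the undercrossing strand belongs to $D_j$, so $\tau(v_1)=\tau(v_2)=(i,j)$.

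Combining these two observations, any witness of $v_1\sim v_2$ as a finite sequence of applications of (T0) and (T2) preserves $\tau$ step by step, hence $\tau(v_1)=\tau(v_2)$. There is no real obstacle here; the only point requiring a brief inspection is the R2 case, which follows directly from the standard picture of the move.
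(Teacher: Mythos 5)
Your proof is correct and is exactly the argument the paper has in mind (the paper simply declares the proposition ``evident'' and gives no details): you verify that the component type is unchanged for surviving crossings under any Reidemeister move and coincides for the two crossings of an R2 pair, so the partition by $\tau$ is itself a tribal system, and the finest tribal system therefore refines it. This is the same template the paper uses explicitly in the proofs of Propositions~\ref{prop:closed_selfcrossings} and~\ref{prop:mixed_crossings}.
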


\subsection{An auxiliary lemma}

Let us formulate an auxiliary lemma.

\begin{lemma}\label{lem:main_lemma}
Let $D$ be a tangle diagram in $F$. Let $a,b\in\V(D)$ be the ends of two curves $\gamma_1,\gamma_2\subset D$ such that (Fig.~\ref{pic:homotopical_paths})
\begin{enumerate}
  \item $\gamma_1$ and $\gamma_2$ intersect transversely (i.e. they do not have common edges);
  \item $\gamma_2$ overcrosses $\gamma_1$ at the crossings $a$ and $b$;
  \item $\gamma_1$ is homotopic to $\gamma_2$ in $F$ as a curve with fixed ends.
\end{enumerate}

\begin{figure}[h]
\centering\includegraphics[width=0.25\textwidth]{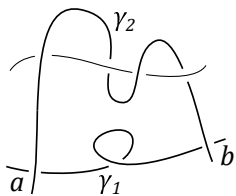}
\caption{Paths $\gamma_1$ and $\gamma_2$}\label{pic:homotopical_paths}
\end{figure}

Then $a$ and $b$ belong to one tribe.
\end{lemma}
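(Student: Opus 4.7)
The plan is to use the homotopy $\gamma_1\simeq\gamma_2$ in $F$ to transform $D$, by a sequence of Reidemeister moves fixing the crossings $a$ and $b$, into a diagram $D'$ in which $\gamma_1$ and $\gamma_2$ form a parallel bigon with $\gamma_2$ still the over-strand at both corners and with no other strand of $D$ passing through the interior. A decreasing second Reidemeister move then cancels $a$ and $b$ simultaneously, so axiom~(T2) gives $a\sim b$ in $D'$, and axiom~(T0), applied in reverse along the sequence of moves, transfers the relation $a\sim b$ back to $D$.

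To produce this simplification, I would fix a smooth representative $H\colon[0,1]^2\to F$ of the homotopy in general position with respect to the ``walls'' $D\setminus(\gamma_1\cup\gamma_2)$, with respect to $\gamma_1$, and with respect to itself. The image of $H$ is an immersed disk whose boundary is the loop $\gamma_2\cdot\gamma_1^{-1}$, and whose interior meets the rest of $D$ in finitely many transverse arcs with endpoints on $\gamma_1\cup\gamma_2$. I would induct on a complexity measure combining the number of these interior arcs with the number of interior transverse intersections of $\gamma_1$ with $\gamma_2$. In the base case (empty interior, $\gamma_1\cap\gamma_2=\{a,b\}$) the configuration at $\{a,b\}$ is exactly that of a decreasing Reidemeister~II move, and the conclusion follows immediately from (T2). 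In the inductive step an innermost interior arc of $D\setminus(\gamma_1\cup\gamma_2)$ cuts off a sub-disk with no other intrusions, and this arc can be pushed out of the bigon by an R2 move if both its endpoints lie on the same side ($\gamma_1$ or $\gamma_2$) or by an R3 move when combined with a nearby crossing; an innermost interior intersection of $\gamma_1$ with $\gamma_2$ splits the bigon into two sub-bigons satisfying the hypotheses of the lemma, to which the induction hypothesis applies, and transitivity of $\sim$ combines the two equivalences.

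The main obstacle lies in matching over/under data: reducing an innermost intrusion or sub-bigon by a given Reidemeister move requires the local strand configuration to match the move's pattern, and the over/under information at interior crossings is not prescribed by the hypothesis (only at $a$ and $b$ is $\gamma_2$ known to be on top). I would handle this by choosing the lift of $H$ to $F\times(0,1)$ carefully so that each elementary reduction respects the existing height data, using the general-position framework of~\cite{Nwp} to organize the case analysis and ensure that the induction actually terminates. Once this is done the desired decreasing R2 move at $\{a,b\}$ becomes available in the resulting diagram, and the tribal equivalence $a\sim b$ propagates back to $D$ as described above.
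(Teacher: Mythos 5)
Your plan hinges on transforming $D$ by Reidemeister moves into a diagram in which $\gamma_1$ and $\gamma_2$ bound an empty bigon, so that $a$ and $b$ die together in one decreasing second Reidemeister move. This reduction is not achievable in general, and the obstacle you flag at the end (matching the over/under data) is not a bookkeeping issue that a careful choice of the lift of $H$ can repair --- it is a genuine topological obstruction. The hypothesis only provides a homotopy of $\gamma_1$ to $\gamma_2$ in the surface $F$, not an isotopy of the corresponding arcs in $F\times(0,1)$ relative to the rest of the diagram. Already for $F=\R^2$, another component $C$ may cross the bigon passing under $\gamma_1$ and over $\gamma_2$, so that the loop $\gamma_2\gamma_1^{-1}$ has nonzero linking number with $C$; then no sequence of Reidemeister moves can ever place $a$ and $b$ in cancelling position, since at the moment of the final decreasing move that loop would bound an empty disk and hence have linking number zero with $C$, while linking numbers are preserved by Reidemeister moves. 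Yet the lemma still asserts $a\sim b$ in this situation, so any correct proof must establish the tribal relation without ever realizing a direct cancellation of $a$ with $b$. Your induction on innermost arcs and sub-bigons has no mechanism for these trapped configurations and therefore cannot terminate.

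The paper's proof supplies exactly the missing idea. After pulling $b$ to $a$ so that $\gamma_2$ becomes a contractible loop, the contraction is carried out as a homotopy in $F\times(0,1)$ whose singular moments are crossing changes, and each crossing change is replaced by the insertion of a small kink (a ``sprout''), which is a legitimate isotopy. This creates auxiliary crossings $b_1,a_1,\dots,b_n,a_n$, and the conclusion is obtained as a chain $a\sim b_1\sim a_1\sim b_2\sim\cdots\sim a_n\sim b$, each link being an application of (T2) to a different local R2 pair (either the pair created by separating the sprout bases, or the pair obtained by sliding a short arc along a sprout to its tip). A secondary gap in your argument: even in the unobstructed case a decreasing second Reidemeister move requires $a$ and $b$ to have opposite signs, which the hypotheses do not guarantee; the paper devotes a separate step to reducing to that case by creating an auxiliary crossing with first, second and third Reidemeister moves.
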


\begin{proof}

The proof follows the reasoning of~\cite{Nwp}.

Without loss of generality, we assume that $\gamma_1$ and $\gamma_2$ are oriented from $a$ to $b$.

Step 1.  Pull the crossing $b$ to the crossing $a$ as shown in Fig.~\ref{pic:crossing_pull}. The pulling process consists second and third Reidemeister moves and defines a morphism $f\colon D\to D'$. The pulling contracts $\gamma_1$ and transforms the path $\gamma_2$ to a path $\gamma'_2$ in $D'$ which is homotopically equivalent to $\gamma_2\gamma_1^{-1}$. Since $\gamma_1$ is homotopic to $\gamma_2$ then $\gamma'_2$ can be considered as a contractible loop.

\begin{figure}[h]
\centering\includegraphics[width=0.5\textwidth]{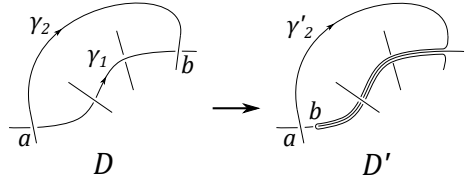}
\caption{Step 1: pulling the crossing $b$ to the crossing $a$ }\label{pic:crossing_pull}
\end{figure}

By the property (T0) $a\sim b$ in $D$ if and only if $a\sim b$ in $D'$. Thus, we reduce the situation to the case when $\gamma_1$ is a small arc and $\gamma_2$ is a contractible loop in $F$ (up to contraction $\gamma_1$ to a point).

Step 2. If $a$ and $b$ have the same sign we apply a first, a second and a third Reidemeister move to create the crossing $c$ as shown in Fig.~\ref{pic:crossing_reverse}.
\begin{figure}[h]
\centering\includegraphics[width=0.5\textwidth]{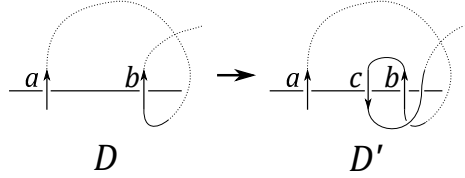}
\caption{Step $2$: changing the crossing sign if necessary}\label{pic:crossing_reverse}
\end{figure}

By the property (T2) $b\sim c$ in $D'$. Then it is suffice to prove that $a\sim c$: in this case $a\sim c\sim b$ in $D'$, hence $a\sim b$ in $D$ by the property (T0). Thus, below we will assume the $a$ and $b$ have different signs.

Step 3. There is a homotopy which contracts $\gamma_2$ to a small arc with ends $a$ and $b$ (Fig.~\ref{pic:contracting_homotopy}).

\begin{figure}[h!]
\centering\includegraphics[width=0.5\textwidth]{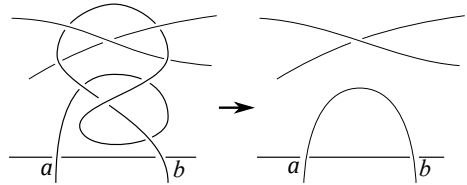}
\caption{Contracting homotopy}\label{pic:contracting_homotopy}
\end{figure}

The contraction process can be considered as a homotopy of the curve in the thickening $F\times (0,1)$. The homotopy can be represented as a sequence of isotopies in $F\times (0,1)$ and self-intersections which look like a change of under- and overcrossing in some crossing (see
Fig.~\ref{pic:cross_change}) after projection to $F$.
\begin {figure}[ht]
\centering
\includegraphics[width=0.3\textwidth]{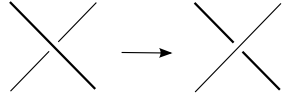}
\caption{Change of crossing}\label{pic:cross_change}
\end {figure}

Then we replace each crossing change by an isotopy which adds a small loop (see Fig.~\ref{pic:cross_change1}).

\begin {figure}[ht]
\centering
\includegraphics[width=0.4\textwidth]{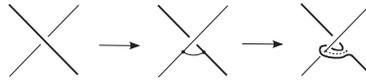}
\caption{Adding a loop instead of crossing change}\label{pic:cross_change1}
\end {figure}

As a result we get an isotopy whose projection transforms the curve $\gamma_2$ to a small arc with "sprouts" (see Fig.~\ref{pic:contraction} middle).

\begin{figure}[ht]
\centering\includegraphics[width=0.9\textwidth]{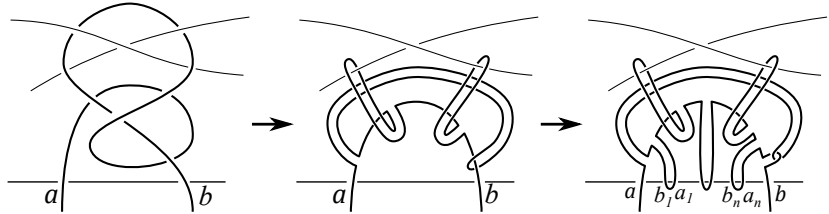}
\caption{Contraction of $\gamma_2$}\label{pic:contraction}
\end{figure}

Then we apply second Reidemeister moves to each arc separating the bases of sprouts (Fig.~\ref{pic:contraction} right) and to the arc $ab$. Let us denote the appearing crossings by $b_1, a_1,\dots, b_n, a_n$. By the property (T2) we have $b_1\sim a_1$, \dots, $b_n\sim a_n$.

Step 4. Let us prove that $a\sim b_1$, $a_1\sim b_2$,\dots, $a_n\sim b$.

Let $a$ and $b$ be the roots of a sprout which grows outside the small loop which $\gamma_2$ was contracted to (Fig.~\ref{pic:reduction1} left). Then take the segment $ab$ and pull it under the diagram along the sprout to its tip. Then in the diagram obtained (Fig.~\ref{pic:reduction1} right) we have $a\sim b$ by the property (T2). And by the property (T0), $a\sim b$ in the initial diagram.
\begin{figure}[h]
\centering\includegraphics[width=0.65\textwidth]{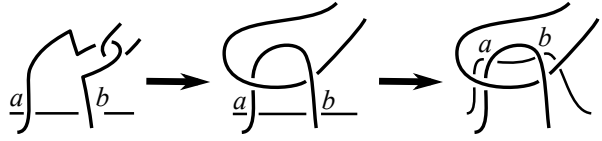}
\caption{A sprout growing outside}\label{pic:reduction1}
\end{figure}

Let $a$ and $b$ be the roots of a sprout which grows outside the small loop (Fig.~\ref{pic:reduction2} top left). Then pull the arc $ab$ up to the base of the sprout (Fig.~\ref{pic:reduction2} top left). For the appearing crossings $c, c'$, $d, d'$, we have $c\sim c'$ and $d\sim d'$ by the property (T2) (Fig.~\ref{pic:reduction2} bottom left). We pull the arc $ab$ to the beginning of the sprout and see (Fig.~\ref{pic:reduction2} bottom right) that $a\sim c'$ and $b\sim d'$ by the property (T2). Then $a\sim c$ and $b\sim d$. After pulling the arc $cd$ to the tip of the sprout we get the equivalence $c\sim d$ by the property (T2) (Fig.~\ref{pic:reduction2} top right). Hence, $a\sim b$.
\begin{figure}[h]
\centering\includegraphics[width=0.7\textwidth]{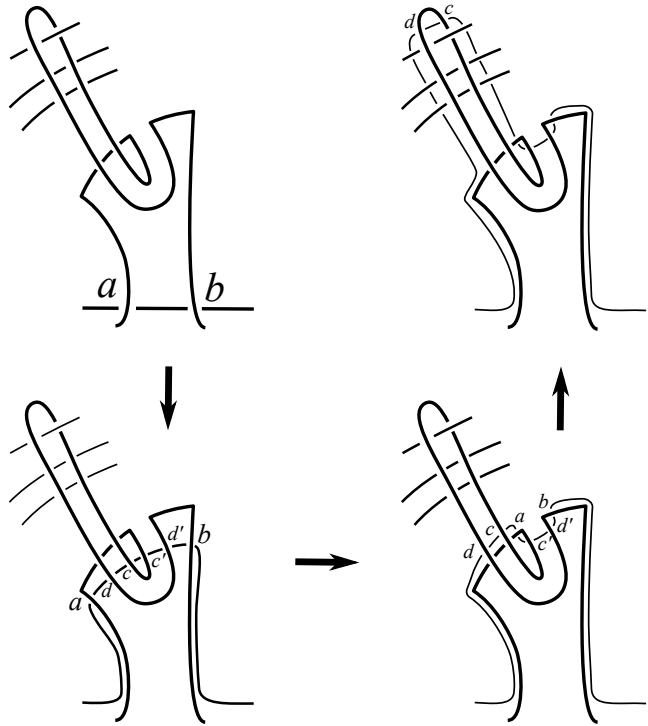}
\caption{A sprout growing inside}\label{pic:reduction2}
\end{figure}

Applying the reasoning above to every sprout, we get $a\sim b_1$, $a_1\sim b_2$,\dots $a_n\sim b$. On the other hand, $b_1\sim a_1$, \dots, $b_n\sim a_n$. Then $a\sim b$. The lemma is proved.
\end{proof}

\clearpage

\subsection{Self-crossings of a long component}

Let $D_i$ be a component of the tangle diagram $D$ such that $\partial D_i\ne\emptyset$. Let $s\in\partial D_i$ be the beginning point of the oriented component $D_i$.

\begin{definition}\label{def:order_type}
A self-crossing $v$  of the component $D_i$ is called an \emph{early undercrossing} if we pass the undercrossing of $v$ before its overcrossing while moving along the component $D_i$ from $s$. Otherwise, the crossing $v$ is an \emph{early overcrossing}.

\begin{figure}[h]
\centering\includegraphics[width=0.5\textwidth]{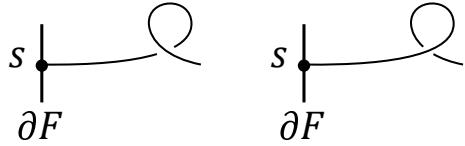}
\caption{An early undercrossing (left) and an early overcrossing (right) }\label{pic:early_underovercrossing}
\end{figure}

Define the \emph{order type} $o$ of the crossing $v$ to be $o(v)=-1$ if $v$ is an early undercrossing, and $o(v)=+1$ if $v$ is an early overcrossing.
\end{definition}

The following proposition is checked directly.

\begin{proposition}\label{prop:order_type_invariance}
Let $D_i$ be a long component of the tangle diagram $D$ and $v_1$ and $v_2$ be self-crossing of $D_i$ (i.e. $\tau(v_1)=\tau(v_2)=(i,i)$). If $v_1\sim v_2$ then $o(v_1)=o(v_2)$.
\end{proposition}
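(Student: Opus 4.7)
The plan is to show that the order type function $o$, restricted to self-crossings of $D_i$, is invariant under the generators (T0) and (T2) of the tribal equivalence; then $o$ descends to a well-defined function on tribes of self-crossings of $D_i$ and the proposition follows at once. Concretely, I would verify two claims: (i) for every Reidemeister move $f\colon D\to D'$ and every self-crossing $v$ of $D_i$ in $D$ such that $f_*(v)$ is defined, $o(v)=o(f_*(v))$; and (ii) whenever $v_1,v_2\in\V(D)$ are the two disappearing crossings of a decreasing second Reidemeister move and both are self-crossings of $D_i$, one has $o(v_1)=o(v_2)$.

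For (ii), the two arcs involved in such an R2 are two subarcs of $D_i$, one of which (call it $\alpha$) overcrosses the other ($\beta$) at both $v_1$ and $v_2$. Since the basepoint $s\in\partial D_i$ is fixed throughout, the traversal of $D_i$ from $s$ passes through $\alpha$ and $\beta$ in some definite order. If $\alpha$ is traversed first, then at each of $v_1,v_2$ the overcrossing visit precedes the undercrossing visit, so $o(v_1)=o(v_2)=+1$; otherwise $o(v_1)=o(v_2)=-1$. In either case $o(v_1)=o(v_2)$.

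For (i), the moves R1$^{\pm}$ and R2$^{\pm}$ only create or destroy their own crossings, and for any unaffected self-crossing $v$ of $D_i$ the two parameter values at which $D_i$ visits $v$ are only displaced by a small ambient isotopy that does not alter their relative order nor their over/under roles, so $o(v)$ is preserved. The main obstacle is R3, since it is the only move that reshuffles three existing crossings without adding or removing any. Here the key observation is that the three arcs involved in R3 are disjoint subarcs of $D_i$ sitting in pairwise disjoint parameter intervals $I_\alpha,I_\beta,I_\gamma\subset[0,1]$ of the parametrization of $D_i$, and neither these intervals nor their relative order on $[0,1]$ are altered by R3, while the over/under assignment at each of the three crossings is also preserved. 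Consequently, for each of the three crossings the two visits from $D_i$ still fall in the same two intervals in the same relative order and with the same over/under roles, so its order type is unchanged. Combined with (ii), this proves the proposition.
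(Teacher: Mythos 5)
Your argument is correct and is exactly the ``direct check'' that the paper alludes to without writing out: one verifies that the order type is preserved by (T0) (the only nontrivial case being R3, where the three strands occupy disjoint, order-preserved parameter intervals and the over/under data at each crossing is unchanged) and that the two crossings cancelled by a decreasing R2 share the same order type because one strand of the bigon overcrosses the other at both crossings. Nothing is missing; this matches the intended proof.
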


\begin{definition}
Let $v$ be a self-crossing $v$  of the component $D_i$. Let $\gamma$ be the arc of $D_i$ from the beginning point $s$ to the first appearance of $v$, and $\alpha$ be the arc from the first appearance of $v$ to the second (Fig.~\ref{pic:long_homotopy_type}). The \emph{homotopy type} of the crossing $v$ is the homotopy class
\begin{equation}\label{eq:long_homotopy_type}
h(v)=[\gamma\alpha\gamma^{-1}]\in\pi_1(F,s).
\end{equation}

\begin{figure}[h]
\centering\includegraphics[width=0.3\textwidth]{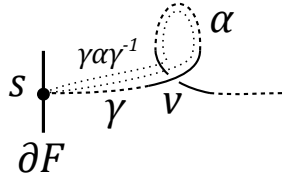}
\caption{The homotopy type of a crossing $v$}\label{pic:long_homotopy_type}
\end{figure}
\end{definition}

\begin{proposition}\label{prop:long_selfcrossing}
Let $v$ and $w$ be self-crossings of the long component $D_i$ of one order type, i.e. $\tau(v)=\tau(w)=(i,i)$ and $o(v)=o(w)$. Then $v\sim w$ if and only if $h(v)=h(w)$.
\end{proposition}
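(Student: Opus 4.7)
For the necessity direction, I would show that the homotopy type $h$ is a tribe invariant; the order type is already known to be one by Proposition~\ref{prop:order_type_invariance}. Since the tribe relation is generated by (T0) and (T2), it suffices to check each separately. Under (T0), when a self-crossing $v$ survives a Reidemeister move, the arcs $\gamma_v$ and $\alpha_v$ are modified only by local, null-homotopic detours, so the class $[\gamma_v \alpha_v \gamma_v^{-1}] \in \pi_1(F, s)$ is preserved. Under (T2), if $v_1, v_2$ are the two self-crossings of a decreasing R2 forming a bigon in $D_i$, the conjugated-chord loops for $v_1$ and $v_2$ differ only by the null-homotopic boundary of the bigon, hence represent the same class in $\pi_1(F, s)$.

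For the sufficiency direction, given $o(v) = o(w)$ and $h(v) = h(w)$, my plan is to apply Lemma~\ref{lem:main_lemma}. I need two curves $\gamma_1, \gamma_2 \subset D$ from $v$ to $w$ with $\gamma_2$ overcrossing $\gamma_1$ at both endpoints and $\gamma_1 \simeq \gamma_2$ in $F$ rel endpoints. Natural candidates arise from arcs of $D_i$ meeting at $s$: take $\gamma_2$ to backtrack along $D_i$ from the first-visit (over-strand, if $o(v)=+1$) half-edge of $v$ to $s$, then proceed forward to the first-visit half-edge of $w$; and $\gamma_1$ analogously via the second-visit half-edges of $v$ and $w$. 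The hypothesis $o(v) = o(w)$ is exactly what makes these choices compatible so that $\gamma_2$ genuinely overcrosses $\gamma_1$ at both endpoints, and a direct computation in $\pi_1(F, v)$ shows that $[\gamma_1 \gamma_2^{-1}]$ reduces to an element of the form $\gamma_v^{-1}([h(v)]^{-1}[h(w)])\gamma_v$, which is trivial precisely when $h(v) = h(w)$.

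The main technical obstacle is the transversality hypothesis of the lemma: the natural candidates above share the arc of $D_i$ from $s$ to the first visit of either crossing, so they have common edges. I would resolve this by first performing auxiliary Reidemeister moves on $D$ (such as inserting an R1 loop or an R2 bigon) to produce an edge-disjoint alternate route for one of the two paths, thereby reducing to a legitimate application of the lemma. A cleaner alternative is a bootstrap via intermediate crossings: create by an R2 move a pair $c_1 \sim c_2$ positioned so that Lemma~\ref{lem:main_lemma} applies cleanly to the pairs $(v, c_1)$ and $(c_2, w)$ separately, and conclude $v \sim c_1 \sim c_2 \sim w$ by transitivity. The various cases depending on the relative order of the four visit events of $v$ and $w$ along $D_i$ are handled uniformly by the same argument, with only minor bookkeeping changes.
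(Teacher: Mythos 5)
Your overall strategy coincides with the paper's: necessity by checking that $h$ is preserved under (T0) and killed-pair-compatible under (T2), and sufficiency by engineering a configuration to which Lemma~\ref{lem:main_lemma} applies, with the contractibility hypothesis of that lemma translating exactly into $h(v)=h(w)$. Your computation that $[\gamma_1\gamma_2^{-1}]$ reduces to a conjugate of $h(v)^{-1}h(w)$ is correct, and you have correctly located the real difficulty: the natural candidate curves are overlapping subarcs of the single component $D_i$, so the transversality (no common edges) hypothesis of Lemma~\ref{lem:main_lemma} fails badly.

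The gap is in your proposed repair of that failure. Inserting a single R1 loop or R2 bigon is a local modification, whereas your two candidate paths share entire arcs of $D_i$ (each of $\gamma_1,\gamma_2$ runs through $s$ and traverses initial arcs of $D_i$ that contain one another), so no bounded-size local move can render them edge-disjoint; likewise, a pair $c_1\sim c_2$ created by one R2 move sits in a contractible neighbourhood and carries trivial homotopy data, so Lemma~\ref{lem:main_lemma} applied to $(v,c_1)$ would again require a pair of edge-disjoint homotopic paths between $v$ and a point of $D_i$ --- the same obstruction reappears. What actually resolves it (and is what the paper does) is a \emph{global} rerouting: pull the undercrossing strands of both $v$ and $w$ along $D_i$ all the way to the beginning point $s$ by a sequence of second and third Reidemeister moves. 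In the resulting diagram $D'$ the two undercrossing strands form a short arc near $s$ that is edge-disjoint from the long arc $\alpha\beta$ of $D_i$ carrying the overcrossings, one reads off $h(v)=[\alpha]$ and $h(w)=[\alpha\beta]$, the hypothesis gives $[\beta]=1$, and Lemma~\ref{lem:main_lemma} applies verbatim; (T0) then transports $v\sim w$ back to $D$. So your plan is salvageable, but the decisive step is this pulling construction rather than the local insertions you suggest.
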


\begin{proof}
By definition the homotopy type of a crossing $v$ does not change under isotopies and Reidemeister moves which do not eliminate $v$.

Let $v$ and $w$ be crossings to which a second Reidemeister move can be applied (Fig.~\ref{pic:homotopy_type_r2}). Denote the arc of $D_i$ from the beginning point $s$ to $v$ by $\gamma$, and the arc between the appearances of $w$ by $\alpha$. We ignore the small arcs between $v$ and $w$ by contracting them to a point. Then $h(v)=h(w)=[\gamma\alpha\gamma^{-1}]$.

\begin{figure}[h]
\centering\includegraphics[width=0.3\textwidth]{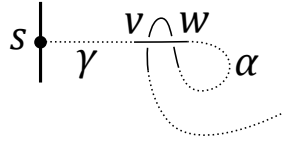}
\caption{Invariance of homotopy type $h$ under a second Reidemeister move}\label{pic:homotopy_type_r2}
\end{figure}

The invariance of the homotopy type established above means that all crossings of one tribe have the same homotopy type. Thus, $v\sim w$ implies $h(v)=h(w)$.

Now, assume that $v$ and $w$ are early overcrossings and $h(v)=h(w)$. Pull the undercrossings of $v$ and $w$ to the starting point $s$ and get a diagram $D'$ (Fig.~\ref{pic:long_homotopy_type_proof}). The pulling process does not change the homotopy types  $h(v)$ and $h(w)$. Assume that the undercrossing of $v$ preceeds the undercrossing of $w$ in the component $D_i$. Denote the arc of $D'$ between the overcrossing of $w$ and the undercrossing of $v$ by $\alpha$ and the arc between the undercrossing of $v$ and the undercrossing of $w$ by $\beta$.

\begin{figure}[h]
\centering\includegraphics[width=0.3\textwidth]{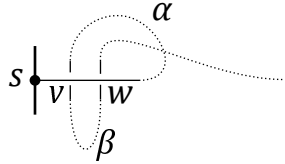}
\caption{Diagram $D'$}\label{pic:long_homotopy_type_proof}
\end{figure}

Then $h(v)=[\alpha]$ and $h(w)=[\alpha\beta]$. Since $h(v)=h(w)$, we have $[\beta]=1$, that is the arc $\beta$ is contractible. By Lemma~\ref{lem:main_lemma}, $v\sim w$ in the diagram $D'$. Then by the property (T0) $v\sim w$ in $D$.

The case of early undercrossings is proved analogously.
\end{proof}

\subsection{Self-crossings of a closed component}

Let $D_i$ be a closed component of the tangle diagram $D$, i.e. $\partial D_i=\emptyset$.

Fix an arbitrary non-crossing point $z$ in $D_i$.

Let $v$ be a self-crossing on the component $D_i$. Denote the arc of $D_i$ which connects $z$ with the overcrossing of $v$ and does not contain the undercrossing of $v$, by $\gamma^o_{v,z}$; the arc which connects $z$ with the undercrossing of $v$ and does not contain the overcrossing of $v$, by $\gamma^u_{v,z}$. Let $\alpha_{z,v}$ be the arc between the under- and overcrossings of $v$ which does not include $z$ (Fig.~\ref{pic:based_diagram}). We orient the curves $\gamma^o_{v,z}$ and $\gamma^u_{v,z}$ from $z$ to $v$, and the arc $\alpha_{z,v}$ keeps the orientation of the component $D_i$.

\begin{figure}[h]
\centering\includegraphics[width=0.3\textwidth]{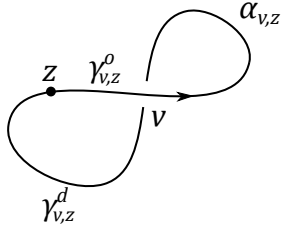}
\caption{Arcs $\gamma^o_{v,z}$, $\gamma^u_{v,z}$ and $\alpha_{z,v}$}\label{pic:based_diagram}
\end{figure}

Recall that $D^+_v$ is the half of $D_i$ from the undercrossing of $v$ to the overcrossing of $v$, and $D^-_v$ starts in the overcrossing of $v$ and ends in the undercrossing of $v$. The crossing $v$ is called a \emph{$z$-overcrossing} if $z\in D^+_v$ (i.e. one passes first the overcrossing of $v$ when moves from $z$ along $D_i$), and a \emph{$z$-undercrossing} if $z\in D^-_v$.

Denote the homotopy class of the component $D_i$ by $\kappa_z=[D_i]\in\pi_1(F,z)$. Let $\bar\pi_D(F,z)$ be the quotient of $\pi_1(F,z)$ by the adjoint action of the element $\kappa_z$. Then $\bar\pi_D(F,z)$ consists of orbits $\bar x=\{\kappa_z^n x\kappa_z^{-n} \mid n\in\Z\}$, $x\in\pi_1(F,z)$.

\begin{definition}\label{pic:closed_homotopy_type}
Let $v$ be a self-crossing of the component $D_i$. The loops $\hat D^+_{v,z}=\gamma^u_{v,z}D^+_v(\gamma^u_{v,z})^{-1}$ and $\hat D^-_{v,z}=\gamma^o_{v,z}D^-_v(\gamma^o_{v,z})^{-1}$ are called the \emph{based signed halves} of the crossing $v$ with the base point $z$ (Fig.~\ref{pic:based_halves}). Let us denote the homotopy classes of the based signed halves $D^\pm_{v,z}$ in $\pi_1(F,z)$ by $\delta^\pm_{v,z}$.

\begin{figure}[h]
\centering\includegraphics[width=0.3\textwidth]{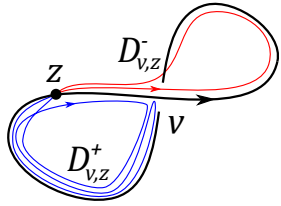}
\caption{Based signed halves}\label{pic:based_halves}
\end{figure}

The orbit
\begin{equation}\label{eq:closed_homotopy_type}
h_{D,z}(v)=\bar\delta^+_{v,z}\in\bar\pi_D(F,z)
\end{equation}
of the homotopy class of the based half $\hat D^+_{v,z}$ is called the \emph{homotopy type} of the crossings $v$.
\end{definition}

\begin{lemma}\label{lem:homotopy_type_basepoint_change}
1. Let $v$ be a self-crossing of a closed component $D_i$. If $v$ is a $z$-overcrossing then $\delta^-_{v,z}\delta^+_{v,z}=\kappa_z$, if $v$ is a $z$-undercrossing then $\delta^+_{v,z}\delta^-_{v,z}=\kappa_z$.

2. Let $v$ and $w$ be a self-crossings of a closed component $D_i$ and $z,z'$ be points in $D_i$ which are not crossings. Then $h_{D,z}(v)=h_{D,z}(w)$ if and only if $h_{D,z'}(v)=h_{D,z'}(w)$.
\end{lemma}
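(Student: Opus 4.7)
For part 1, I would argue by direct path decomposition. Suppose $v$ is a $z$-overcrossing, so $z\in D^+_v$; let $a_1,a_2,a_3$ be the three arcs of $D_i$ obtained by cutting at the three points $z$, the overcrossing of $v$, the undercrossing of $v$, taken in the positive order along $D_i$. The definitions then read off as $D^+_v=a_3a_1$, $D^-_v=a_2$, $\gamma^o_{v,z}=a_1$, and $\gamma^u_{v,z}=a_3^{-1}$. Substituting into the formulas for the based signed halves gives $\hat D^+_{v,z}=a_1a_3$ and $\hat D^-_{v,z}=a_1a_2a_1^{-1}$, so $\delta^-_{v,z}\delta^+_{v,z}=a_1a_2a_3=\kappa_z$. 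The $z$-undercrossing case is symmetric under the cyclic relabelling that puts $z$ into $D^-_v$, and yields $\delta^+_{v,z}\delta^-_{v,z}=\kappa_z$.

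For part 2, my plan is to construct a bijection between $\bar\pi_D(F,z')$ and $\bar\pi_D(F,z)$ that sends $h_{D,z'}(v)$ to $h_{D,z}(v)$ for every self-crossing $v$ of $D_i$. Let $\eta$ be the forward arc of $D_i$ from $z'$ to $z$, and write $\phi_\eta(\alpha)=\eta^{-1}\alpha\eta$ for the standard basepoint-change isomorphism $\pi_1(F,z')\to\pi_1(F,z)$. If $\eta'$ denotes the complementary arc of $D_i$, then $\kappa_{z'}=\eta\eta'$ and $\kappa_z=\eta'\eta=\eta^{-1}\kappa_{z'}\eta$, i.e.\ $\phi_\eta(\kappa_{z'})=\kappa_z$. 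Because $\phi_\eta$ conjugates the distinguished element, it sends $\kappa_{z'}$-orbits to $\kappa_z$-orbits and descends to a bijection $\bar\phi_\eta\colon\bar\pi_D(F,z')\to\bar\pi_D(F,z)$.

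It then remains to verify $\bar\phi_\eta(h_{D,z'}(v))=h_{D,z}(v)$. Applying $\phi_\eta$ to $\hat D^+_{v,z'}=\gamma^u_{v,z'}\,D^+_v\,(\gamma^u_{v,z'})^{-1}$ produces a loop of the same shape $\xi\,D^+_v\,\xi^{-1}$ based at $z$, but with basing path $\xi=\eta^{-1}\gamma^u_{v,z'}$ in place of $\gamma^u_{v,z}$. Both $\xi$ and $\gamma^u_{v,z}$ are arcs from $z$ to the undercrossing of $v$ that lie entirely in $D_i$, and any two such arcs on the circle $D_i$ differ by an integer number of traversals of $D_i$, i.e.\ by a power of $\kappa_z$. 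Hence $\phi_\eta(\delta^+_{v,z'})$ and $\delta^+_{v,z}$ differ by $\kappa_z$-conjugation, which is exactly the relation quotiented out in $\bar\pi_D(F,z)$. The biconditional in the lemma then follows from the bijectivity of $\bar\phi_\eta$.

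The one delicate point I expect is the basing-path comparison at the end: one must be certain that the ambiguity between two paths from $z$ to the crossing point inside $D_i$ is captured by a power of $\kappa_z$ and nothing more general. This is precisely why the homotopy type is defined in the quotient $\bar\pi_D$ rather than in $\pi_1(F,z)$, and once it is made explicit the rest of the argument is bookkeeping with the cyclic structure of $D_i$ and the definition of $\phi_\eta$.
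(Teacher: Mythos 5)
Your proposal is correct and follows essentially the same route as the paper: part 1 is the same computation with the three arcs made explicit (the paper writes $D^+_v=(\gamma^u_{v,z})^{-1}\gamma^o_{v,z}$ and $D^-_v=\alpha_{v,z}$ and multiplies out), and part 2 is the paper's argument with the base-point-change isomorphism run in the opposite direction, including the key observation that the two basing arcs inside $D_i$ differ by a power of $\kappa$, which is absorbed by the quotient $\bar\pi_D$. No gaps.
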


\begin{proof}
1. Let $v$ be a $z$-overcrossing. Then $D^+_v=(\gamma^u_{v,z})^{-1}\gamma^o_{v,z}$ and $D^-_v=\alpha_{v,z}$. Hence, $\delta^+_{v,z}=[\gamma^u_{v,z}(\gamma^u_{v,z})^{-1}\gamma^o_{v,z}(\gamma^u_{v,z})^{-1}]=[\gamma^o_{v,z}(\gamma^u_{v,z})^{-1}]$ and
$\delta^-_{v,z}=[\gamma^o_{v,z}\alpha_{v,z}(\gamma^o_{v,z})^{-1}]$. Thus,
\[
\delta^-_{v,z}\delta^+_{v,z}=[\gamma^o_{v,z}\alpha_{v,z}(\gamma^o_{v,z})^{-1}\gamma^o_{v,z}(\gamma^u_{v,z})^{-1}]=
[\gamma^o_{v,z}\alpha_{v,z}(\gamma^u_{v,z})^{-1}]=\kappa_z.
\]
The proof for a $z$-undercrossing is analogous.

2. Let $\gamma_{z,z'}$ is an arc of $D_i$ from $z$ to $z'$. Then the formula $x\mapsto(\gamma_{z,z'})^{-1}x\gamma_{z,z'}$ defines an isomorphism $\phi\colon \pi_1(F,z)\to\pi_1(F,z')$. Since $\phi(\kappa_z)=\kappa_{z'}$, the isomorphism $\phi$ induces a bijection $\bar\phi\colon\bar\pi_D(F,z)\to \bar\pi_D(F,z')$.

Since the paths $\gamma^u_{v,z'}$ and $(\gamma_{z,z'})^{-1}\gamma^u_{v,z}$ connects $z'$ with $v$, we have $(\gamma_{z,z'})^{-1}\gamma^u_{v,z}=\kappa_{z'}^p\gamma^u_{v,z'}$ for some $p\in\Z$. Then
\begin{multline*}
\bar\phi(h_{D,z}(v))=\overline{[(\gamma_{z,z'})^{-1}\gamma^u_{v,z}D^+_v(\gamma^u_{v,z})^{-1}\gamma_{z,z'}]}=
\overline{[\kappa_{z'}^{p}\gamma^u_{v,z'}D^+_v(\gamma^u_{v,z'})^{-1}\kappa_{z'}^{-p}]}=\\
\overline{[\gamma^u_{v,z'}D^+_v(\gamma^u_{v,z'})^{-1}]}=h_{D,z'}(v).
\end{multline*}

Analogously, $\bar\phi(h_{D,z}(w))=h_{D,z'}(w)$. Since $\bar\phi$ is a bijection, $h_{D,z}(v)=h_{D,z}(w)$ if and only if $h_{D,z'}(v)=h_{D,z'}(w)$.
\end{proof}

\begin{proposition}\label{prop:closed_selfcrossings}
Let $v$ and $w$ be self-crossings of the closed component $D_i$. Then $v\sim w$ if and only if $h_{D,z}(v)=h_{D,z}(w)$.
\end{proposition}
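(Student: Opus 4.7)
The plan is to imitate the proof of Proposition~\ref{prop:long_selfcrossing}, with the basepoint $z$ playing the role of the fixed starting point $s$. The quotient $\bar\pi_D(F,z)$ precisely absorbs the ambiguity introduced by the movability of $z$ along the closed loop $D_i$: sliding $z$ once around $D_i$ conjugates any $\pi_1(F,z)$-representative by $\kappa_z$.

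\textbf{Forward direction ($v\sim w\Rightarrow h_{D,z}(v)=h_{D,z}(w)$).} I would verify invariance of $h_{D,z}$ under the generators of tribal equivalence. For any Reidemeister move $f\colon D\to D'$ with $f_*(v)=v'$ defined, both $\gamma^u_{v,z}$ and $D^+_v$ deform continuously, so $\delta^+_{v,z}=\delta^+_{v',z}$ in $\pi_1(F,z)$ and hence in $\bar\pi_D(F,z)$. For condition (T2), I would consider a decreasing second Reidemeister move with disappearing crossings $v_1,v_2$: the bigon between them is contractible, and a direct computation shows that $\delta^+_{v_1,z}$ and $\delta^+_{v_2,z}$ coincide up to conjugation by a power of $\kappa_z$ (corresponding to which bigon arc serves as a segment of the auxiliary path $\gamma^u$). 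Hence they project to the same class in $\bar\pi_D(F,z)$.

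\textbf{Backward direction.} Suppose $h_{D,z}(v)=h_{D,z}(w)$, i.e.\ $\delta^+_{v,z}=\kappa_z^n\delta^+_{w,z}\kappa_z^{-n}$ for some $n\in\Z$. By Lemma~\ref{lem:homotopy_type_basepoint_change}(2) I may reposition $z$ freely along $D_i$. Following Step~1 of Proposition~\ref{prop:long_selfcrossing}, I pull the undercrossings of both $v$ and $w$ along $D_i$ onto a small arc through $z$ by a sequence of second and third Reidemeister moves, producing a diagram $D'$; by (T0) the relation $v\sim w$ is preserved, and by the forward direction so are the homotopy types. In $D'$ I identify, via a short computation, the product $\delta^+_{v,z}(\delta^+_{w,z})^{-1}$ with the homotopy class of the short arc $\beta$ of $D_i$ between the two pulled undercrossings, so the hypothesis forces $[\beta]=\kappa_z^n$. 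I then absorb the factor $\kappa_z^n$ by wrapping the pulled configuration $n$ additional times around $D_i$ (again Reidemeister moves compatible with (T0)), obtaining a diagram in which the corresponding short arc $\beta'$ is contractible. Now Lemma~\ref{lem:main_lemma} applies with $\gamma_1=\beta'$ and $\gamma_2$ the complementary arc of $D_i$ from $v$ to $w$ passing through the two overcrossings: $\gamma_2$ overcrosses $\gamma_1$ at $v$ and $w$, and $\gamma_1$ is homotopic to $\gamma_2$ rel endpoints after the unwinding. The lemma delivers $v\sim w$ in the modified diagram, and (T0) transfers the equivalence back to $D$.

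\textbf{Principal obstacle.} The essential new difficulty relative to Proposition~\ref{prop:long_selfcrossing} is the $\kappa_z$-conjugation ambiguity. In the long case the pulled arc $\beta$ is immediately contractible from $h(v)=h(w)$; on a closed component $[\beta]$ only belongs to the $\kappa_z$-orbit of the trivial class, and the orbit representative must be undone by an explicit geometric wrapping of the diagram around $D_i$ before the configuration meets the hypotheses of Lemma~\ref{lem:main_lemma}.
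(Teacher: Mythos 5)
Your forward direction, and your treatment of the case in which the representatives $\delta^+_{v,z}$ and $\delta^+_{w,z}$ are literally equal in $\pi_1(F,z)$, both match the paper: pull the crossings to the base point, observe that the connecting arc is contractible, and invoke Lemma~\ref{lem:main_lemma}. The gap sits exactly where you locate the ``principal obstacle''. First, the identity you extract from the hypothesis is wrong: $h_{D,z}(v)=h_{D,z}(w)$ means $\delta^+_{v,z}=\kappa_z^n\delta^+_{w,z}\kappa_z^{-n}$, so if in the pulled diagram $\delta^+_{w,z}=\delta^+_{v,z}[\beta]$, then $[\beta]=(\delta^+_{v,z})^{-1}\kappa_z^{-n}\delta^+_{v,z}\kappa_z^{n}$, which in a non-abelian $\pi_1(F,z)$ is neither $\kappa_z^n$ nor, in general, any power of $\kappa_z$; a conjugation discrepancy cannot be recast as a multiplicative factor to be ``absorbed''. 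Second, and more fundamentally, the proposed absorption mechanism cannot exist: as your own forward direction shows, the representative $\delta^+_{v,z}\in\pi_1(F,z)$ itself (not merely its orbit in $\bar\pi_D(F,z)$) is invariant under every isotopy and Reidemeister move fixing $z$. Hence no amount of wrapping the pulled configuration around $D_i$ can change the conjugating power $n$ separating $\delta^+_{v,z}$ from $\delta^+_{w,z}$, and after your wrapping step the arc $\beta'$ is no more contractible than $\beta$ was.

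The paper resolves the conjugation ambiguity by a different device. Using second Reidemeister moves near $z$ it creates \emph{new} crossings $v_1,\dots,v_p$ whose based halves can be prescribed at will (they are not Reidemeister images of existing crossings, so their representatives are not constrained by invariance). The auxiliary crossing $v_1$ is arranged so that, with respect to a base point $z_1$ on one side of it, its representative equals that of $v$ (whence $v\sim v_1$ by the equal-representative case, after moving the base point via Lemma~\ref{lem:homotopy_type_basepoint_change}), while with respect to $z$ its representative is $\kappa_z\delta^+_{v,z}\kappa_z^{-1}$. Iterating $p$ times walks the representative through the conjugation orbit until it coincides with $\delta^+_{w,z}$, and transitivity gives $v\sim w$. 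Some construction of this kind --- crossings that are tribally equivalent yet whose $z$-based representatives differ by conjugation by $\kappa_z$ --- is indispensable here, and your proposal does not supply one.
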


\begin{proof}
1.  Let us show first that $v\sim w$ implies $h_{D,z}(v)=h_{D,z}(w)$. Define an equivalence relation on the crossings of the component $K_i$ of the tangle $T$ for all diagrams of $T$ by the formula: $v\sim_h w$ if and only if $h_{D,z}(v)=h_{D,z}(w)$, $v,w\in\V(D_i)\subset\V(D)$, $D\in\mathfrak T$. By Lemma~\ref{lem:homotopy_type_basepoint_change}, the equivalence relation $\sim_h$ does not depend on the choice of the point $z$.

  Let us check that the relation $\sim_h$ obeys the properties (T0) and (T2).

  Let $f\colon D\to D'$ be an isotopy or a Reidemeister move. We can assume that $f$ does not move the point $z$ (by Lemma~\ref{lem:homotopy_type_basepoint_change} we can choose any appropriate point). Then $\kappa_z=\kappa'_z\in\pi_1(F,z)$. Hence, the sets of orbits $\bar\pi_D(F,z)$ and $\bar\pi_{D'}(F,z)$ are identical. For any crossing $v$ in the component $D_i$ which survives under the move $f$, we have $\delta^+_{v,z}=(\delta')^+_{f_*(v),z}\in\pi_1(F,z)$, hence, $h_{D,z}(v)=h_{D',z}(f_*(v))$ where $f_*(v)$ is the correspondent vertex in $D'_i$, and $(\delta')^+_{f_*(v),z}$ is the homotopy class of $(\widehat{D'})^+_{f_*(v),z}$. Thus, the condition $v\sim_h w$ is equivalent to the condition $f_*(v)\sim_h f_*(w)$. So, the property (T0) is fulfilled.

  Let $v,w$ be two self-crossings of $D_i$ to which a second Reidemeister move can be applied (Fig.~\ref{pic:closed_homotopy_type_T2}). Then $[\hat D^+_{v,z}]=[\hat D^+_{w,z}]\in\pi_1(F,z)$. Hence, $h_{D,z}(v)=h_{D,z}(w)$. Thus, the property (T2) holds.

\begin{figure}[h]
\centering\includegraphics[width=0.3\textwidth]{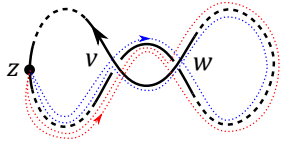}
\caption{Based signed halves $\hat D^+_{v,z}$ (blue) and  $\hat D^+_{w,z}$ (red)}\label{pic:closed_homotopy_type_T2}
\end{figure}

Since, the equivalence relation $\sim$ is the finest with the properties (T0) and (T2), the condition $v\sim w$ implies $v\sim_h w$, i.e. $h_{D,z}(v)=h_{D,z}(w)$.

2. Now, assume that $h_{D,z}(v)=h_{D,z}(w)$.

If $\delta^+_{v,z}=\delta^+_{w,z}$ for some base point $z$ then pull the overcrossings of $v$ and $w$ to the point $z$ along the arcs $\gamma^u_{v,z}$ and $\gamma^u_{w,z}$ and get a new diagram $D'$ (Fig.~\ref{pic:closed_homotopy_type_proof1}).

\begin{figure}[h]
\centering\includegraphics[width=0.35\textwidth]{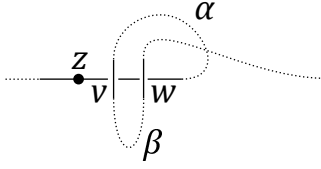}
\caption{Diagram $D'$}\label{pic:closed_homotopy_type_proof1}
\end{figure}

Then $\delta^+_{v,z}=(\delta')^+_{v,z}=[\alpha]$ and $\delta^+_{w,z}=(\delta')^+_{w,z}=[\alpha\beta]$. Since $\delta^+_{v,z}=\delta^+_{w,z}$, the arc $\beta$ is contractible. Then $v\sim w$ in $D'$ by Lemma~\ref{lem:main_lemma}, and $v\sim w$ in $D$ by the property (T0).

Assume then that $\delta^+_{w,z}=\kappa_z^p\delta^+_{v,z}\kappa_z^{-p}$. We can suppose that the undercrossing of $v$ is close to $z$ so $\hat D^+_{v,z}=\alpha_{v,z}$ up to arcs in a contractible neighbourhood of $z$ (Fig.~\ref{pic:closed_homotopy_type_proof2} left). For brevity denote the arc $\alpha_{v,z}$ by $\alpha$.

\begin{figure}[h]
\centering\includegraphics[width=0.7\textwidth]{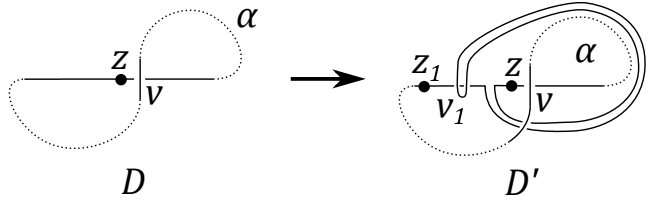}
\caption{Construction of the crossing $v_1$}\label{pic:closed_homotopy_type_proof2}
\end{figure}

Choose a point $z_1$ on $D_i$ near $z$ and pull a part of the segment $zz_1$ along the loop $\alpha$ and create a crossing $v_1$ as shown in Fig.~\ref{pic:closed_homotopy_type_proof2} right. Then $(\delta')^+_{v_1,z'}=[\alpha]$ and $(\delta')^+_{v,z'}=[\alpha\alpha^{-1}\alpha\alpha^{-1}\alpha]=[\alpha]$. Then as we proved above, $v\sim v_1$ in $D'$. On the other hand, $(\delta')^+_{v,z}=\delta^+_{v,z}$ and $(\delta')^+_{v_1,z}=[\kappa'_z\alpha(\kappa'_z)^{-1}]=\kappa_z\delta^+_{v,z}\kappa_z^{-1}$.

Repeating the process, we will get a diagram $\tilde D$ with new crossings $v_1, v_2,\dots, v_p$ such that $v_i\sim v_{i+1}$ in $\tilde D$ and $\tilde\delta^+_{v_i,z}=\kappa^i\delta^+_{v,z}\kappa^{-i}$. Hence, $v_p\sim v$ in $\tilde D$. On the other hand, $\tilde\delta^+_{v_p,z}=\kappa^i\delta^+_{v,z}\kappa^{-i}=\delta^+_{w,z}=\tilde\delta^+_{w,z}$, therefore, $v_n\sim w$ in $\tilde D$. Thus, $v\sim w$ in $\tilde D$ and $v\sim w$ in $D$ by the property (T0).

The case $p<0$ can be proved analogously.
\end{proof}


\subsection{Mixed crossings}

Let $D_i$ and $D_j$ be two components of the tangle diagram $D$.

Choose a point $z_i$ in the component $D_i$ to be equal to the beginning point $s_i$ if the component $D_i$ is not closed, and an arbitrary non crossing point if $D_i$ is closed. By the same rule, choose a point $z_j$ in $D_j$.

Let $v$ be a crossing of the components $D_i$ and $D_j$. Denote the arc in $D_i$ from $z_i$ to $v$ by $\gamma_{z_i,v}$, and the arc in $D_i$ from $z_j$ to $v$ by $\gamma_{z_j,v}$ (Fig.~\ref{pic:mixed_crossing_paths})

\begin{figure}[h]
\centering\includegraphics[width=0.3\textwidth]{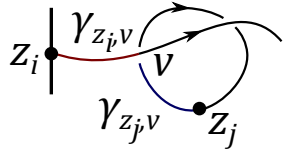}
\caption{Paths $\gamma_{z_i,v}$ and $\gamma_{z_j,v}$}\label{pic:mixed_crossing_paths}
\end{figure}

Let $\kappa_i\in\pi_1(F,z_i)$ (respectively, $\kappa_j\in\pi_1(F,z_j)$) be the homotopy class of $D_i$ (respectively, the homotopy class of $D_j$) if it is closed and $1$ otherwise.

Let $\pi_1(F,z_i,z_j)$ be the set of homotopy classes of continuous paths in $F$ from $z_i$ to $z_j$. Then the fundamental group $\pi_1(F,z_i)$ acts on this set from the left, and the group $\pi_1(F,z_j)$ acts on it from the right. Consider the set $\bar\pi_D(F,z_i,z_j)=\kappa_i\backslash\pi_1(F,z_i,z_j)/\kappa_j$ consisting of cosets
\[
\bar x=\{\kappa_i^p x\kappa_j^q \mid p,q\in\Z\},\ x\in\pi_1(F,z_i,z_j).
\]

\begin{definition}\label{def:mixed_homotopy_type}
Let $v$ be a crossing of the components $D_i$ and $D_j$. The \emph{homotopy type} of the crossing $v$ is the element
\begin{equation}\label{eq:mixed_homotopy_type}
h_{D,z_i,z_j}(v)=\overline{[\gamma_{z_i,v}\gamma_{z_j,v}^{-1}]}\in\bar\pi_D(F,z_i,z_j).
\end{equation}
\end{definition}

\begin{proposition}\label{prop:mixed_crossings}
Let $v$ and $w$ be crossings of the components $D_i$ and $D_j$ with $D_i$ overcrossing $D_j$. Then $v\sim w$ if and only if $h_{D,z_i,z_j}(v)=h_{D,z_i,z_j}(w)$.
\end{proposition}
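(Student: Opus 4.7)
The plan is to mirror the proof of Proposition~\ref{prop:closed_selfcrossings}. The forward implication $v\sim w\Rightarrow h_{D,z_i,z_j}(v)=h_{D,z_i,z_j}(w)$ is obtained by defining the auxiliary relation $v\sim_h w\iff h_{D,z_i,z_j}(v)=h_{D,z_i,z_j}(w)$ and checking that $\sim_h$ satisfies axioms (T0) and (T2); minimality of $\sim$ then forces $\sim\,\Rightarrow\,\sim_h$. The converse is reduced, first by pulling to the base points and then by a bootstrap construction that creates auxiliary crossings, to an application of Lemma~\ref{lem:main_lemma} with $\gamma_2$ an arc of $D_i$ and $\gamma_1$ an arc of $D_j$.

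For the forward direction I would first establish base-point independence: if $z_i'\in D_i$ is another admissible point, an arc $\gamma_{z_i,z_i'}\subset D_i$ induces a bijection $\bar\pi_D(F,z_i,z_j)\to\bar\pi_D(F,z_i',z_j)$ carrying $h_{D,z_i,z_j}(v)$ to $h_{D,z_i',z_j}(v)$; the double-coset quotient by $\kappa_i$ absorbs the two possible choices of $D_i$-arc to $v$ when $D_i$ is closed, and likewise on the $z_j$ side. This is a direct analogue of Lemma~\ref{lem:homotopy_type_basepoint_change}(2). Axiom (T0) is then immediate: choose $z_i,z_j$ outside the small disk in which a Reidemeister move $f$ acts, so that $\kappa_i,\kappa_j$ and the arcs $\gamma_{z_i,v},\gamma_{z_j,v}$ survive $f$ up to homotopy. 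For (T2), if $v,w$ are the two crossings of a decreasing R2, they form a small bigon whose sides yield $\gamma_{z_i,v}\simeq\gamma_{z_i,w}$ along $D_i$ and $\gamma_{z_j,v}\simeq\gamma_{z_j,w}$ along $D_j$, so the two double-coset classes agree.

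For the converse, write the equality as
\[
[\gamma_{z_i,v}\gamma_{z_j,v}^{-1}]=\kappa_i^{p}\,[\gamma_{z_i,w}\gamma_{z_j,w}^{-1}]\,\kappa_j^{q}\quad\text{in }\pi_1(F,z_i,z_j)
\]
for some $p,q\in\Z$, with $p=0$ when $D_i$ is long and $q=0$ when $D_j$ is long. In the case $p=q=0$, pull the overcrossings of $v$ and $w$ to $z_i$ along $\gamma_{z_i,v},\gamma_{z_i,w}$ and their undercrossings to $z_j$ along $\gamma_{z_j,v},\gamma_{z_j,w}$, obtaining a diagram $D'$ in which both $v$ and $w$ lie near $z_i$. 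The arcs $\gamma_2\subset D_i$ and $\gamma_1\subset D_j$ joining $v$ to $w$ then satisfy the hypotheses of Lemma~\ref{lem:main_lemma}: $\gamma_2$ overcrosses $\gamma_1$ at both endpoints, and the assumed equality of classes translates, after conjugation by the short arc from $v$ to $z_i$, into $[\gamma_2\gamma_1^{-1}]=1\in\pi_1(F,v)$. Lemma~\ref{lem:main_lemma} then yields $v\sim w$ in $D'$, and (T0) brings this back to $D$.

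For general $(p,q)$ I would apply the bootstrap of Proposition~\ref{prop:closed_selfcrossings}. Assuming $D_i$ is closed, construct an auxiliary mixed crossing $v_1$ close to $v$ by routing a small arc of $D_j$ near the undercrossing of $v$ once along the closed curve $D_i$; one checks that representatives of $h_{D',z_i,z_j}(v_1)$ and $h_{D,z_i,z_j}(v)$ in $\pi_1(F,z_i,z_j)$ differ by left multiplication by $\kappa_i$, while $v\sim v_1$ follows from the $p=q=0$ case applied inside the enlarged diagram. Iterating this $|p|$ times (with the sign of $p$ controlling the direction of the loop), and performing the dual construction $|q|$ times on the $z_j$ side, yields a crossing whose representative in $\pi_1(F,z_i,z_j)$ coincides with one for $w$; one more application of the $p=q=0$ case concludes. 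The main obstacle is the diagrammatic bookkeeping in this last step: each auxiliary crossing must realise a clean left multiplication by $\kappa_i$ (respectively right multiplication by $\kappa_j$) without perturbing the other factor, which forces a careful choice of which strand is routed over the other in the local model.
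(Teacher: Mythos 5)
Your proposal is correct and follows essentially the same route as the paper: the forward implication via checking (T0) and (T2) for the relation $\sim_h$ together with base-point independence, and the converse by pulling $v$ and $w$ to the base points and invoking Lemma~\ref{lem:main_lemma}, with auxiliary R2-created crossings absorbing the $\kappa_i^p$, $\kappa_j^q$ ambiguity. The only (harmless) difference is in the last step: the paper disposes of the one-sided cases by pulling along a path in the class $\kappa_i^p\gamma_{z_i,v}$ and then needs a single auxiliary crossing $v_1$ with $[\gamma_{z_i,v_1}]=[\kappa_i^p\gamma_{z_i,v}]$, $[\gamma_{z_j,v_1}]=[\gamma_{z_j,v}]$ for the general case, whereas you iterate a Proposition~\ref{prop:closed_selfcrossings}-style bootstrap $|p|+|q|$ times; both rest on the same arc-choice trick.
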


\begin{proof}
Assume that $D_i$ is closed and $z_i$, $z_i'$ be two non crossing points in $D_i$. Denote the path from $z_i$ to $z_i'$ by $\gamma_{z_i,z_i'}$. The map $x\mapsto (\gamma_{z_i,z_i'})^{-1}x$ defines a bijection $\phi\colon\pi_1(F,z_i,z_j)\to\pi_1(F,z_i',z_j)$ and a bijection between the sets $\bar\phi\colon\bar\pi_D(F,z_i,z_j)\to\bar\pi_D(F,z_i',z_j)$.

For a crossing $v$ of the components $D_i$ and $D_j$, we have $\gamma_{z_i',v}=\kappa_i^p(\gamma_{z_i,z_i'})^{-1}\gamma_{z_i,v}$, hence $\bar\phi(h_{D,z_i,z_j}(v))=h_{D,z_i',z_j}(v)$. Thus, the condition $h_{D,z_i,z_j}(v)=h_{D,z_i,z_j}(w)$ is equivalent to the condition $h_{D,z_i',z_j}(v)=h_{D,z_i',z_j}(w)$. Analogously, we can change the point $z_j$ if the component $D_j$ is closed.

For any crossings $v$ and $w$ of component type $(i,j)$ in a diagram $D$ of the tangle $T$,  denote $v\sim_h w$ if and only if $h_{D,z_i,z_j}(v)=h_{D,z_i,z_j}(w)$. We prove that the equivalence relation $\sim_h$ obeys (T0) and (T2).

Let $f\colon D\to D'$ be an isotopy or a Reidemeister move. We can suppose that $f$ does not move points $z_i$ and $z_j$ (otherwise choose other base points). Then the sets $\bar\pi_D(F,z_i,z_j)$ and $\bar\pi_{D'}(F,z_i,z_j)$ coincide. The paths $\gamma_{z_i,v}\gamma_{z_j,v}^{-1}$ in $D$ and $\gamma'_{z_i,v}(\gamma'_{z_j,v})^{-1}$ in $D'$ are homotopic. Hence, for any crossing $v$ of component type $(i,j)$ we have $h_{D,z_i,z_j}(v)=h_{D',z_i,z_j}(f_*(v))$ if the correspondent crossings $f_*(v)$ in $D'$ exists. Thus, the condition $v\sim_h w$ in $D$ implies $f_*(v)\sim_h f_*(w)$ in $D$. The property (T0) is fulfilled.

Let $v$ and $w$ be two crossings of component type $(i,j)$ to which one can apply a second Reidemeister move (Fig.~\ref{pic:mixed_crossing_T2}).

\begin{figure}[h]
\centering\includegraphics[width=0.3\textwidth]{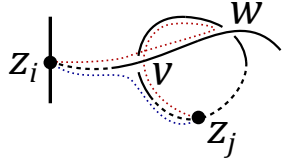}
\caption{Mixed crossings in a second Reidemeister move}\label{pic:mixed_crossing_T2}
\end{figure}

The paths $\gamma_{z_i,v}\gamma_{z_j,v}^{-1}$ and $\gamma_{z_i,w}\gamma_{z_j,w}^{-1}$ are homotopic, hence $v\sim_h w$. Thus, the property (T2) holds.

Since $\sim$ is the finest equivalence with the properties (T0) and (T2), $v\sim w$ implies $v\sim_h w$, i.e. $h_{D,z_i,z_j}(v)=h_{D,z_i,z_j}(w)$.

Now, let $h_{D,z_i,z_j}(v)=h_{D,z_i,z_j}(w)$. Assume first that $[\gamma_{z_i,v}\gamma_{z_j,v}^{-1}]=[\gamma_{z_i,w}\gamma_{z_j,w}^{-1}]$.

Pull the crossings $v$ and $w$ to the point $z_i$ along the paths $\gamma_{z_i,v}$ and $\gamma_{z_i,w}$ and get a diagram $D'$ (Fig.~\ref{pic:mixed_homotopy_type_proof1}). Let $\gamma$ be the arc in $D_j$ from $z_j$ to $w$ and $\alpha$ be the arc from $w$ to $v$.

\begin{figure}[h]
\centering\includegraphics[width=0.3\textwidth]{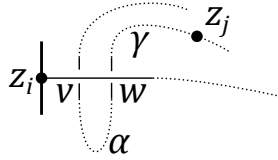}
\caption{Diagram $D'$}\label{pic:mixed_homotopy_type_proof1}
\end{figure}

Then $[\gamma_{z_i,v}\gamma_{z_j,v}^{-1}]=[\gamma\alpha]$ and $\gamma_{z_i,w}\gamma_{z_j,w}^{-1}]=[\gamma]$. By assumption, $[\gamma\alpha]=[\gamma]$, hence, $\alpha$ is contractible. Then $v\sim w$ by Lemma~\ref{lem:main_lemma}.

Let $[\kappa_i^p\gamma_{z_i,v}\gamma_{z_j,v}^{-1}]=[\gamma_{z_i,w}\gamma_{z_j,w}^{-1}]$, $p\in\Z$. Then pull the crossings $v$ and $w$ to the point $z_i$ along the paths $\kappa_i^p\gamma_{z_i,v}$ and $\gamma_{z_i,w}$, and apply the arguments of the previous case. Then we get $v\sim w$. Analogously, the equality $[\gamma_{z_i,v}\gamma_{z_j,v}^{-1}\kappa_j^q]=[\gamma_{z_i,w}\gamma_{z_j,w}^{-1}]$, $q\in\Z$ implies $v\sim w$.

In general case, $[\kappa_i^p\gamma_{z_i,v}\gamma_{z_j,v}^{-1}\kappa_j^q]=[\gamma_{z_i,w}\gamma_{z_j,w}^{-1}]$, $p,q\in\Z$. With a second Reidemeister move, create a crossing $v_1$ near $v$ such that $[\gamma_{z_i,v_1}]=[\kappa_i^p\gamma_{z_i,v}]$ and $[\gamma_{z_j,v_1}]=[\gamma_{z_j,v}]$ (we identify homotopy classes of paths from $z_i$ and $z_j$ to $v$ and $v_1$ by merging $v$ with $v_1$), see Fig.~\ref{pic:mixed_homotopy_type_proof2}. Then $v\sim v_1$ and $v_1\sim w$ by the previous cases. Thus, $v\sim w$.
\begin{figure}[h]
\centering\includegraphics[width=0.4\textwidth]{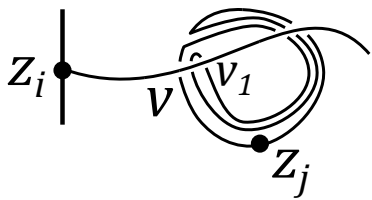}
\caption{Crossing $v_1$}\label{pic:mixed_homotopy_type_proof2}
\end{figure}
\end{proof}

\begin{remark}\label{rem:mixed_inverse_homotopy_type}
Instead of the element $h_{D,z_i,z_j}(v)$ we can take the element
\[
h_{D,z_j,z_i}(v)=\overline{[\gamma_{z_j,v}\gamma_{z_i,v}^{-1}]}\in\bar\pi_D(F,z_j,z_i)
\]
where $\bar\pi_D(F,z_j,z_i)=\kappa_j\backslash\pi_1(F,z_j,z_i)/\kappa_i$, as the homotopy type of a mixed crossing $v$. Since the classes $h_{D,z_i,z_j}(v)$ and $h_{D,z_j,z_i}(v)$ are inverse to each other, the condition $h_{D,z_i,z_j}(v)=h_{D,z_i,z_j}(w)$ is equivalent to the condition $h_{D,z_j,z_i}(v)=h_{D,z_j,z_i}(w)$.
\end{remark}

\subsection{Main theorem}

We can summarize Propositions~\ref{prop:long_selfcrossing},~\ref{prop:closed_selfcrossings} and~\ref{prop:mixed_crossings} in the following theorem.

\begin{theorem}\label{thm:main_theorem}
Let $T$ be an oriented tangle in the surface $F$ and $D$ be its diagram. Then two crossings $v$ and $w$ of the diagram $D$ belong to one tribe if and only if they have identical component type, order type (if $v$ and $w$ are self-crossings of a long component) and homotopy type.
\end{theorem}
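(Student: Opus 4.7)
The theorem is essentially a compilation of the component-wise results already established, so my plan is to organize the argument by the component type and invoke the appropriate proposition in each case.

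First, I would use Proposition~\ref{prop:tribe_type} to reduce to the situation where $v$ and $w$ have a common component type $\tau(v)=\tau(w)=(i,j)$. If $v\sim w$ but the component types differed, we would already have a contradiction; conversely, if $v$ and $w$ do not share a component type they obviously lie in different tribes. This reduces the theorem to proving the equivalence of $v\sim w$ with the equality of homotopy types (and, in the long self-crossing case, of order types) within a fixed component type.

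Next, I would split into three cases according to the component type and directly invoke the corresponding characterization:
\begin{itemize}
  \item If $i=j$ and $D_i$ is a long component, combine Proposition~\ref{prop:order_type_invariance} (which shows $v\sim w \Rightarrow o(v)=o(w)$) with Proposition~\ref{prop:long_selfcrossing} (which, assuming equal order types, gives $v\sim w \iff h(v)=h(w)$).
  \item If $i=j$ and $D_i$ is a closed component, apply Proposition~\ref{prop:closed_selfcrossings}, yielding $v\sim w \iff h_{D,z}(v)=h_{D,z}(w)$ for any choice of base point $z\in D_i$.
  \item If $i\ne j$, apply Proposition~\ref{prop:mixed_crossings}, which gives $v\sim w \iff h_{D,z_i,z_j}(v)=h_{D,z_i,z_j}(w)$.
\end{itemize}
In each case the relevant notion of ``homotopy type'' is the one defined in that subsection, and the three definitions together constitute the single label $h(\cdot)$ appearing in the theorem statement.

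There is no real obstacle here, since the substantive work has been carried out in the three propositions; the proof is a case analysis. The only thing worth being careful about is to observe that the order type only makes sense, and only needs to be included as an invariant, in the long self-crossing case: for mixed crossings the two branches lie on different components so the notion is vacuous, and for self-crossings of a closed component the order type is not Reidemeister-invariant and is not part of the classifying data (the homotopy class $h_{D,z}(v)\in\bar\pi_D(F,z)$ already encodes all the relevant information, which is precisely why one quotients by the adjoint action of $\kappa_z$). With this remark the three cases assemble into the unified statement of the theorem.
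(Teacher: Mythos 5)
Your proposal is correct and matches the paper's own treatment: the paper explicitly presents Theorem~\ref{thm:main_theorem} as a summary of Propositions~\ref{prop:long_selfcrossing}, \ref{prop:closed_selfcrossings} and~\ref{prop:mixed_crossings}, with Propositions~\ref{prop:tribe_type} and~\ref{prop:order_type_invariance} supplying the preliminary reductions on component and order type exactly as you describe. Your closing remark about why the order type is only relevant for self-crossings of a long component is a sensible clarification but introduces nothing beyond what the paper already does.
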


For the phratries of the crossings of a tangle we get immediately the following description.
\begin{corollary}\label{cor:phratries_surface_tangle}
Let $T$ be an oriented tangle in the surface $F$ and $D$ be its diagram. Then two crossings $v$ and $w$ of the diagram $D$ belong to one phratry if and only if they have identical signs, component type, order type (if $v$ and $w$ are self-crossings of a long component) and homotopy type.
\end{corollary}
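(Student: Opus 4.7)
The plan is straightforward: Theorem~\ref{thm:main_theorem} is essentially a consolidation of Propositions~\ref{prop:long_selfcrossing}, \ref{prop:closed_selfcrossings} and \ref{prop:mixed_crossings}, so I would proceed by case analysis on the component type $\tau(v)$. For the ``only if'' direction, assuming $v\sim w$, Proposition~\ref{prop:tribe_type} forces $\tau(v)=\tau(w)=:(i,j)$; if $i=j$ and $D_i$ is long, Proposition~\ref{prop:order_type_invariance} supplies the equality of order types, and the ``only if'' halves of Propositions~\ref{prop:long_selfcrossing}, \ref{prop:closed_selfcrossings} (long-self, closed-self) or \ref{prop:mixed_crossings} (mixed, $i\ne j$) supply the equality of the relevant homotopy type.

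For the ``if'' direction I would run the same three-way split. Given matching component type (and matching order type in the long-self case) and matching homotopy type, the converse halves of the same three propositions immediately yield $v\sim w$. Since the three cases (long self-crossings, closed self-crossings, mixed crossings) are mutually exclusive and jointly exhaust all possibilities once $\tau(v)=\tau(w)$ is fixed, no pair of crossings with a common component type is left unaddressed; and if $\tau(v)\ne\tau(w)$ the contrapositive of Proposition~\ref{prop:tribe_type} already excludes $v\sim w$.

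For Corollary~\ref{cor:phratries_surface_tangle} the remaining point is cosmetic: phratries are by definition the partition of each tribe by the sign function, and the sign of a surviving crossing is preserved by isotopies and by all three Reidemeister moves, so $v$ and $w$ belong to one phratry iff they belong to one tribe and $\mathrm{sgn}(v)=\mathrm{sgn}(w)$. Substituting the characterization from Theorem~\ref{thm:main_theorem} produces exactly the claimed list of invariants. There is no real obstacle at this stage, since all technical work lives in Lemma~\ref{lem:main_lemma} and the three propositions; the only care needed is to invoke the correct notion of homotopy type ($\pi_1(F,s)$, $\bar\pi_D(F,z)$, or $\bar\pi_D(F,z_i,z_j)$) in each case of the split.
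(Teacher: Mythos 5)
Your argument is correct and matches the paper's intent: the paper states the corollary as an immediate consequence of Theorem~\ref{thm:main_theorem}, since phratries are by definition the subsets $C^\pm=\{v\in C\mid sgn(v)=\pm1\}$ of a tribe $C$, so membership in one phratry is exactly membership in one tribe together with equality of signs. (In fact you need not even invoke preservation of sign under Reidemeister moves, since the phratry partition is read off within the fixed diagram $D$.)
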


\begin{corollary}\label{cor:classical_trivial_tribes}
Let $K$ be a classical knot. Then all the crossings of a diagram of $K$ belong to one tribe.
\end{corollary}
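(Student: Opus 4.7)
The plan is to deduce the corollary directly from Theorem~\ref{thm:main_theorem} by showing that every invariant that distinguishes tribes is forced to be trivial in the classical setting. A classical knot is realized as a knot in the thickened sphere $S^2\times(0,1)$ (equivalently, in $\R^2\times(0,1)$), so we take $F=S^2$ in the framework of the paper.

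First I would observe that since $K$ has a single component, every crossing of any diagram $D$ of $K$ has component type $(1,1)$, so component type cannot separate crossings into different tribes. Second, $K$ is a closed knot, so its unique component is closed, meaning that the order type dichotomy of Definition~\ref{def:order_type} does not arise (it only applies to self-crossings of a long component). Thus the only remaining tribe invariant listed in Theorem~\ref{thm:main_theorem} is the homotopy type $h_{D,z}(v)\in\bar\pi_D(F,z)$ of Definition~\ref{pic:closed_homotopy_type}.

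Finally I would invoke $\pi_1(S^2,z)=1$, which immediately forces $\bar\pi_D(S^2,z)=\pi_1(S^2,z)/\mathrm{ad}(\kappa_z)$ to be a one-element set. Hence every self-crossing $v$ of the (closed) diagram component has the same (trivial) homotopy type, and Theorem~\ref{thm:main_theorem} gives $v\sim w$ for every pair of crossings $v,w\in\V(D)$. There is no real obstacle here, since the theorem does all the work; the only point that deserves a brief mention is that the choice of model surface ($S^2$ versus $\R^2$) is immaterial because both have trivial fundamental group.
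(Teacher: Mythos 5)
Your proposal is correct and follows essentially the same route as the paper: both reduce the statement to Theorem~\ref{thm:main_theorem} by noting that the component type is trivial for a one-component knot, the order type does not apply to a closed component, and $\pi_1(F,z)=1$ for $F=S^2$ or $\R^2$ makes the homotopy type constant. No gaps.
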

\begin{proof}
For a classical knots we have $F=\R^2$ or $S^2$. In both cases $\pi_1(F,z)=1$, hence, the homotopy type $h$ is trivial. For a knot, the component type $\tau$ is trivial too. And we have no order type. Thus, all crossing belong to one tribe.
\end{proof}

We postpone the further corollaries of Theorem~\ref{thm:main_theorem}  to Section~\ref{subsect:universal_index}.

\section{Tribes and phratries of flat tangles in the surface $F$}\label{sect:tribes_flat_crossings}

\emph{Flat tangles} in the surface $F$ can be defined as immersions of an oriented compact 1-manifold into the thickened surface $F\times(0,1)$ up to homotopies fixed on the boundary $\partial F\times(0,1)$. Combinatorially, one defines flat tangles as equivalence classes of tangle diagrams modulo isotopies, Reidemeister moves (Fig.~\ref{pic:reidmove}) and crossing changes (Fig.~\ref{pic:cross_change}).

Flat tangle diagrams are obtained from tangle diagrams by forgetting under-overcrossing structure (Fig.~\ref{pic:flat_tangle}). A flat diagram can be considered as the equivalence class of a tangle diagram modulo crossing change operations applied to the crossings of the diagram. We identify the crossing sets of all the diagrams in this equivalence class.

\begin{figure}[h]
\centering
  \includegraphics[width=0.4\textwidth]{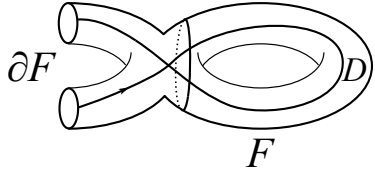}
  \caption{A flat tangle diagram}\label{pic:flat_tangle}
\end{figure}

\begin{proposition}\label{prop:types_crossing_change}
Let $D=D_1\cup\cdots\cup D_n$ be a diagram of a tangle $T$ in the surface $F$ and $v\in\V(D)$ be a crossing. After the crossing change applied to $v$ we get a tangle diagram $\bar D$ with the same set of crossing as $D$.
\begin{enumerate}
  \item Let $w\in\V(D)=\V(\bar D)$ be a crossing different from $v$. Then the component, order and homotopy types $\tau_{\bar D}(w)$, $o_{\bar D}(w)$, $h_{\bar D}(w)$ of the crossing $w$ in the diagram $\bar D$ coincide with the correspondent types $\tau_{D}(w)$, $o_{D}(w)$, $h_{D}(w)$  of $w$ in the diagram $D$:
\[
\tau_{\bar D}(w)=\tau_{D}(w),\quad o_{\bar D}(w)=o_{D}(w),\quad h_{\bar D}(w)=h_{D}(w).
\]
  \item Let $\tau_D(v)=(i,j)$, $o_{D}(v)$, $h_{D}(v)$ be the types of the crossing $v$ in $D$. Then
  \begin{itemize}
    \item $\tau_{\bar D}(v)=(j,i)$;
    \item $o_{\bar D}(v)=-o_D(v)$ if $i=j$ and the component $D_i$ is long;
    \item the homotopy type $h_{\bar D}(v)$ is equal to:
    \begin{itemize}
      \item $h_{D}(v)\in\pi(F,s_i)$ if $i=j$, $\partial D_i\ne\emptyset$ and $s_i$ is the beginning point of the component $D_i$;
      \item $h_D(v)\in\bar\pi(F,z_i,z_j)$ if $i\ne j$ (by Remark~\ref{rem:mixed_inverse_homotopy_type} we can choose the set $\bar\pi(F,z_i,z_j)$ for the homotopy index)
      \item $\kappa_i h_{D,z_i}^{-1}(v)\in\bar\pi_1(F,z_i)$ if $i=j$, $\partial D_i=\emptyset$ and $z_i\in D_i\setminus\V(D)$
    \end{itemize}
  \end{itemize}
\end{enumerate}
\end{proposition}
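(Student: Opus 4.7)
The proof splits naturally along the two parts of the statement, with part (1) being essentially routine and part (2) requiring a short calculation only in the closed self-crossing case.

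For part (1), the key observation is that a crossing change at $v$ affects only the under/over-strand assignment at the single vertex $v$; it does not alter the embedded graph $D\subset F$, nor the under/over-strand structure at any other crossing. The definitions of $\tau(w)$, $o(w)$ and $h(w)$ for $w\neq v$ are built from data of three types: (a) the combinatorial information at $w$ (which strand goes over at $w$), (b) the cyclic or linear order in which the four local arcs at $w$ are traversed along the host component(s), and (c) homotopy classes of subarcs of the $D_k$'s, viewed as curves in $F$. None of these depends on what happens at $v$. Hence each of $\tau_{\bar D}(w)$, $o_{\bar D}(w)$, $h_{\bar D}(w)$ coincides with its $D$-counterpart. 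I would spell this out once in a sentence or two, separately noting that $D^+_w,\ D^-_w,\ \gamma^o_{w,z},\ \gamma^u_{w,z}$ are unchanged as curves in $F$ because they are defined using the under/over-data at $w$ only.

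For part (2), the first two bullets are immediate: crossing change at $v$ swaps the over- and under-components, which swaps $(i,j)$, and (in the long self-crossing case) swaps which strand at $v$ is met first from $s_i$, so it swaps early under- with early overcrossings, giving $o_{\bar D}(v)=-o_D(v)$. For the homotopy type in the long self-crossing case, $\gamma$ and $\alpha$ are arcs of $D_i$ which, as curves in $F$ with endpoints, are unaffected by the crossing change, so $h_{\bar D}(v)=h_D(v)$. For the mixed case ($i\ne j$), the class $[\gamma_{z_i,v}\gamma_{z_j,v}^{-1}]$ is again built from arcs in $F$ whose images do not depend on which strand is on top at $v$; invoking Remark~\ref{rem:mixed_inverse_homotopy_type} to reinterpret $h_{\bar D}(v)$ in the group $\bar\pi(F,z_i,z_j)$ appropriate to the new component type, we get the same class.

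The only case with content is the closed self-crossing case, and this is the step I expect to write with care. After the crossing change at $v$, the roles of the over- and under-arcs at $v$ are exchanged, so $\bar\gamma^u_{v,z}=\gamma^o_{v,z}$ and the new positive half $\bar D^+_v$ (from the new undercrossing to the new overcrossing of $v$) is exactly the old $D^-_v$. Therefore
\[
\bar\delta^+_{v,z}=[\bar\gamma^u_{v,z}\,\bar D^+_v\,(\bar\gamma^u_{v,z})^{-1}]=[\gamma^o_{v,z}\,D^-_v\,(\gamma^o_{v,z})^{-1}]=\delta^-_{v,z}.
\]
Now apply Lemma~\ref{lem:homotopy_type_basepoint_change}(1): if $v$ is a $z$-overcrossing in $D$ then $\delta^-_{v,z}=\kappa_z(\delta^+_{v,z})^{-1}$, and if it is a $z$-undercrossing then $\delta^-_{v,z}=(\delta^+_{v,z})^{-1}\kappa_z$. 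In the quotient $\bar\pi_D(F,z)$ these two elements coincide, since conjugation by $\kappa_z^{-1}$ sends $\kappa_z(\delta^+_{v,z})^{-1}$ to $(\delta^+_{v,z})^{-1}\kappa_z$. Thus $h_{\bar D,z}(v)=\overline{\kappa_z(\delta^+_{v,z})^{-1}}=\kappa_z\,h_{D,z}(v)^{-1}$ in $\bar\pi_D(F,z)$.

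The only mild obstacle is bookkeeping in the quotient: one should verify (or at least note) that left multiplication by $\kappa_z$ and inversion descend to well-defined operations on $\bar\pi_D(F,z_i)$, which follows because $\kappa_z\cdot(\kappa_z^n x\kappa_z^{-n})=\kappa_z^{n+1}x\kappa_z^{-n}$ lies in the orbit of $\kappa_z x$, and because the orbit of $x^{-1}$ consists of inverses of elements in the orbit of $x$. With these checks in place the formula $h_{\bar D}(v)=\kappa_i h_{D,z_i}^{-1}(v)$ is well-defined and the case is complete.
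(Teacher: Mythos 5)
Your proof is correct and follows essentially the same route as the paper: everything except the closed self-crossing case is read off from the definitions, and for that case you identify $\bar D^+_v=D^-_v$ and apply Lemma~\ref{lem:homotopy_type_basepoint_change}(1), exactly as the paper does. Your extra checks (that the two cases of the lemma agree in $\bar\pi_D(F,z)$ and that $x\mapsto\kappa_z x^{-1}$ descends to the quotient) are correct and merely make explicit what the paper leaves implicit.
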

\begin{proof}
  The proposition follows from the definitions. For example, if $v$ is a self-crossing of a closed component $D_i$ then its homotopy type in $\bar D$ is
\[
h_{\bar D}(v)=h_{\bar D,z_i}(v)=\bar\delta'^+_{v,z}=\bar\delta^-_{v,z}=\kappa_i(\bar\delta^+_{v,z})^{-1}=\kappa_i h_{D,z_i}^{-1}(v),
\]
since $D'^+_v=D^-_v$ and $\delta^+_{v,z}\delta^-_{v,z}$ or $\delta^-_{v,z}\delta^+_{v,z}$ is equal to $\kappa_i$ by Lemma~\ref{lem:homotopy_type_basepoint_change}.
\end{proof}

Let $T=K_1\cup\cdots\cup K_n$ be a flat tangle in the surface $F$ and $\mathfrak T$ be the category of flat diagrams of $T$. Let us describe the finest tribal system on $T$ (the definition of tribes for flat diagrams repeats verbally Definition~\ref{def:crossing_tribe}).

\begin{definition}\label{def:flat_types}
Let $D=D_1\cup\cdots\cup D_n$ be a diagram of the flat tangle $T$. Let $v$ be a crossing of the diagram $D$. Then $v$ is an intersection point of components $D_i$ and $D_j$.

The \emph{flat component type} of the crossing $v$ is the unordered pair $\bar\tau(v)=\{i,j\}$.

Let $i=j$, the component $D_i$ be long, and $s_i$ be the beginning point of $D_i$. The \emph{flat homotopy type} $\bar h(v)$ of the crossing $v$ coincides with the homotopy type $h(v)\in\pi_1(F,s_i)$ defined by the formula~\eqref{eq:long_homotopy_type}.

Let $i=j$ and the component $D_i$ be closed. The \emph{flat homotopy type} $\bar h(v)$ of the crossing $v$ is the element $h_{D,z}(v)$ defined by the formula~\eqref{eq:closed_homotopy_type} and considered as an element in the quotient set
\[
\bar{\bar\pi}_D(F,z)=\bar\pi_D(F,z)/\sigma.
\]
where $\sigma$ is the involution $\sigma(x)=\kappa_i x^{-1}$, $x\in\bar\pi_D(F,z)$.

Let $i\ne j$. The \emph{flat homotopy type} $\bar h(v)=\bar h_{D,z_i,z_j}(v)$ of the crossing $v$ coincides with the homotopy type $h_{D,z_i,z_j}(v)\in\bar\pi_D(F,z_i,z_j)$ defined by the formula~\eqref{eq:mixed_homotopy_type}.
\end{definition}

Now we can formulate an analogue of Theorem~\ref{thm:main_theorem} for flat tangles.

\begin{theorem}\label{thm:tribes_flat_tangle}
Let $T$ be an oriented flat tangle in the surface $F$ and $D$ be its diagram. Then two crossings $v$ and $w$ of the diagram $D$ belong to one tribe if and only if they have identical flat component types and flat homotopy types.
\end{theorem}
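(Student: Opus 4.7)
The approach is to reduce Theorem~\ref{thm:tribes_flat_tangle} to Theorem~\ref{thm:main_theorem} via the lifting of flat diagrams to tangle diagrams, using Proposition~\ref{prop:types_crossing_change} to control the behavior of types under crossing changes. Throughout I exploit the following observation: any tangle Reidemeister move projects, under the forgetting-of-over/under map, to a flat Reidemeister move of the same type. Consequently, any chain of tangle moves witnessing $v\sim w$ on some lift $\tilde D$ of $D$ projects to a chain of flat moves witnessing $v\sim w$ on $D$, so the flat tribe equivalence on $\V(D)$ contains the tangle tribe equivalence on $\V(\tilde D)=\V(D)$ for every lift $\tilde D$.

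For necessity, I introduce the relation $v\sim_{\bar h}w$ defined by $\bar\tau(v)=\bar\tau(w)$ and $\bar h(v)=\bar h(w)$, and verify that it satisfies the flat analogues of (T0) and (T2). For (T0), a flat Reidemeister move $f\colon D\to D'$ lifts to a tangle Reidemeister move $\tilde f\colon\tilde D\to\tilde D'$; the invariance of $\tau,o,h$ under $\tilde f$ (from the proofs of Propositions~\ref{prop:long_selfcrossing},~\ref{prop:closed_selfcrossings},~\ref{prop:mixed_crossings}) implies invariance of $\bar\tau,\bar h$ under $f$. For (T2), a flat decreasing R2 can be lifted to an honest tangle decreasing R2 by choosing the over/under structure at the two disappearing crossings coherently; the two crossings are then in one tangle tribe, hence have equal tangle types, which project to equal flat types. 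Since the flat tribe relation is the finest equivalence satisfying the flat (T0) and (T2), $v\sim w$ implies $v\sim_{\bar h}w$.

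For sufficiency, given $v,w\in\V(D)$ with $\bar\tau(v)=\bar\tau(w)$ and $\bar h(v)=\bar h(w)$, I fix any lift $\tilde D$ of $D$ and modify it by at most one crossing change (at $v$ or $w$) to obtain a lift $\tilde D^*$ in which $v$ and $w$ share identical tangle component type, order type (when applicable), and homotopy type. Theorem~\ref{thm:main_theorem} then yields $v\sim w$ in the tangle tribe system on $\tilde D^*$, and the observation above transfers this to $v\sim w$ in the flat tribe system on $D$. The adjustment proceeds case by case, guided by Proposition~\ref{prop:types_crossing_change}. For mixed crossings ($i\ne j$), a crossing change swaps the tangle component type between $(i,j)$ and $(j,i)$ and inverts the homotopy type via the identification of Remark~\ref{rem:mixed_inverse_homotopy_type}; using the hypothesis $\bar h(v)=\bar h(w)$ I arrange a common type $(i,j)$ with equal homotopy classes. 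For self-crossings of a long component, a crossing change flips the order type without touching the homotopy type, so a single crossing change at $w$ aligns the order types while preserving $h(v)=h(w)$. For self-crossings of a closed component, the equality $\bar h(v)=\bar h(w)$ in $\bar{\bar\pi}_D(F,z)=\bar\pi_D(F,z)/\sigma$ means either $h_{D,z}(v)=h_{D,z}(w)$ (no adjustment needed) or $h_{D,z}(v)=\sigma(h_{D,z}(w))=\kappa_i h_{D,z}(w)^{-1}$; in the second case, a crossing change at $w$ transforms $h_{D,z}(w)$ into $\sigma(h_{D,z}(w))=h_{D,z}(v)$ by Proposition~\ref{prop:types_crossing_change} while leaving $h_{D,z}(v)$ unchanged.

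The main technical obstacle is the bookkeeping around crossing changes for closed self-crossings and the $(i,j)$ versus $(j,i)$ convention for mixed crossings, both of which are precisely what motivates the quotients $\bar{\bar\pi}_D(F,z)=\bar\pi_D(F,z)/\sigma$ and the symmetrization of $\bar\tau$ appearing in Definition~\ref{def:flat_types}. Once Proposition~\ref{prop:types_crossing_change} is in hand, the argument is essentially a routine reduction to the non-flat case.
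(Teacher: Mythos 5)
Your proposal is correct and follows essentially the same strategy as the paper: necessity by checking that the flat types are invariants of the flat tribe relation (via the behavior of the tangle types under crossing changes, Proposition~\ref{prop:types_crossing_change}), and sufficiency by choosing a lift $\tilde D$ whose over/under structure at $v$ and $w$ aligns their tangle component, order and homotopy types, applying Theorem~\ref{thm:main_theorem}, and flattening the resulting chain of Reidemeister moves. The only cosmetic difference is that you phrase the alignment as ``at most one crossing change of an arbitrary lift'' while the paper directly prescribes the over/under choice at $v$ and $w$; these are the same construction.
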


\begin{proof}
  By Proposition~\ref{prop:types_crossing_change}, if $v$ and $w$ belong to one tribe of flat crossings then their flat component type and flat homotopy type coincide.

Let $\bar\tau(v)=\bar\tau(w)$ and $\bar h(v)=\bar h(w)$. Then lift the diagram $D$ to a diagram $\tilde D$ of a tangle by choosing arbitrarily the under-overcrossing structure in all the crossings of $D$ except $v$ and $w$. For the crossings $v$ and $w$ we choose the under-overcrossings by the following rule:
\begin{itemize}
  \item Let $v$ and $w$ are mixed crossings of components $D_i$ and $D_j$, $i<j$. Then choose the under-over crossings so that $\tau(v)=\tau(w)=(i,j)$ in the diagram $\tilde D$.
  \item Let $v$ and $w$ are self-crossings of a long component $D_i$. Then choose the under-over crossings so that $o(v)=o(w)=+1$ in the diagram $\tilde D$.
  \item Let $v$ and $w$ are self-crossings of a closed component $D_i$. Let $\alpha\in\bar\pi_D(F,z)$ be one of the two elements which correspond to the element $h(v)=h(w)\in\bar{\bar\pi}_D(F,z)$. Then choose the under-over crossings so that $\bar\delta^+_{v,z}=\bar\delta^+_{w,z}=\alpha$ in the diagram $\tilde D$.
\end{itemize}
Thus, we obtain a tangle diagram $\tilde D$ with crossings $v$ and $w$ which have identical component, order and homotopy types. Then by Theorem~\ref{thm:main_theorem} $v$ and $w$ belong to one tribe of crossings of tangle diagrams. This means that there exists a sequence of intermediate diagrams $\tilde D,\tilde D_1,\dots,\tilde D_n,\tilde D$ and crossings in them which establishes the equivalence $v\sim w$ by sequental application of properties (T0) and (T2). Then the flattening $D, D_1,\dots, D_n, D$ of this sequence prove the equivalence of $v$ and $w$ as crossings of flat tangle diagrams.
\end{proof}

Crossings of flat diagrams have no signs, so we can not split tribes into phratries like we did in Definition~\ref{def:crossing_tribe}. But the difference between two parts of a tribe reveals when one considers an  oriented parity of crossings~\cite{Npf}: parity values of two crossings in a second Reidemeister move are opposite although the crossings belong to one tribe. Thus, we need another definition of tribe splitting.

\begin{definition}\label{def:phratry_system}
Let $T$ be a (flat) tangle and $\mathfrak T$ be the category of its diagrams. A \emph{phratry system} on diagrams of the tangle $T$ is a family of partitions $\mathfrak P(D)$ of the sets $\V(D)$, $D\in\mathfrak T$, into subsets $P\in\mathfrak P(D)$ called \emph{phratries} such that
\begin{itemize}
  \item[$(\Phi^\ast)$] There is an involution $\ast\colon\mathfrak P(D)\to\mathfrak P(D)$. For a phratry $P\in\mathfrak P(D)$, the phratry $P^*$ is called the \emph{dual phratry} to $P$; if $P^*=P$ then the phratry $P$ is called \emph{self-dual}. The union $P\cup P^*$ is called a \emph{tribe};
  \item[$(\Phi0)$] Let $f\colon D\to D'$ be a Reidemeister move, $v_1,v_2\in\V(D)$ and $v'_1=f_*(v_1)$ and $v'_2=f_*(v_2)$ be the correspondent crossings in $D'$. Then if $v_1$ and $v_2$ belong to one phratry in $\mathfrak P(D)$ then $v'_1$ and $v'_2$ also belong to one phratry in $\mathfrak P(D')$; if $v_1$ and $v_2$ belong to dual phratries then $v'_1$ and $v'_2$ also belong to dual phratries in $\mathfrak P(D')$;
  \item[$(\Phi2)$] If $f\colon D\to D'$ is a decreasing second Reidemeister move and $v_1$ and $v_2$ are the disappearing crossings then $v_1$ and $v_2$ belong to dual phratries in $\mathfrak P(D)$.
\end{itemize}
\end{definition}

\begin{remark}
1. A tribal system $\mathfrak C$ in Definition~\ref{def:crossing_tribe} defines a phratry system with phratries $C^\pm$, $C\in\mathfrak C(D)$, $D\in \mathfrak T$. The phratries $C^+$ and $C^-$ are dual.

2. Any phratry system $\mathfrak P$ induces a tribal system with tribes $P\cup P^*$, $P\in\mathfrak P(D)$, $D\in\mathfrak T$.
\end{remark}

Below we will always consider the \emph{finest phratry system} $\mathfrak P^u$, i.e. the partitions of the crossing sets $\V(D)$, $D\in\mathfrak T$, generated by the properties $(\Phi0)$ and $(\Phi2)$.

\begin{remark}\label{rem:phratry_graph}
Let us give a graph interpretation of the finest phratry and tribal systems. Let $T$ be a (flat) tangle in the surface $F$. We construct a family of simple unoriented graphs $G(D)$ whose vertex sets $V(G(D))$ are the crossing sets $\V(D)$, $D\in\mathfrak T$. The sets of edges $E(G(D))$ of the graphs $G(D)$ are defined inductively by the following rules:
\begin{itemize}
  \item ($R_2$-duality) If $v_1$ and $v_2$ are crossings in a diagram $D$ to which a decreasing second Reidemeister move can be applied then $v_1v_2\in E(G(D))$;
  \item (Reidemeister translation) Let $f\colon D\to D'$ be a Reidemeister move, $v_1,v_2\in\V(D)$ and $v'_1=f_*(v_1)$ and $v'_2=f_*(v_2)$ be the correspondent crossings in $D'$. If $v_1v_2\in E(G(D))$ then $v'_1v'_2\in E(G(D'))$;
  \item (transitive closure) Let $v_1$, $v_2$ and $v_3$ be crossings of a diagram $D\in\mathfrak T$ such that $v_1v_2, v_2v_3\in E(G(D))$. Then $v_1v_3\in E(G(D))$.
\end{itemize}

Then for each diagram $D\in\mathfrak T$ the graph $G(D)$ defines a partition of the crossing set $\V(D)$ into subsets which are the vertex sets of the connected components of $G(D)$. This partition is the partition $\mathfrak C^u(D)$ of the finest tribal system $\mathfrak C^u$.

In order to describe the finest phratries, let us assign weights $\epsilon(v_1v_2)\in\Z_2$ to the edges $v_1v_2\in E(G(D))$, $D\in\mathfrak T$, of the graphs $G(D)$. We define weights inductively by the following rules:
\begin{itemize}
  \item ($R_2$-duality) If $v_1$ and $v_2$ are crossings in a diagram $D$ to which a decreasing second Reidemeister move can be applied then $\epsilon(v_1v_2)=1$;
  \item (Reidemeister translation) Let $f\colon D\to D'$ be a Reidemeister move, $v_1,v_2\in\V(D)$ and $v'_1=f_*(v_1)$ and $v'_2=f_*(v_2)$ be the correspondent crossings in $D'$. If $v_1v_2\in E(G(D))$ then $\epsilon(v'_1v'_2)=\epsilon(v_1v_2)$;
  \item (transitive closure) Let $v_1$, $v_2$ and $v_3$ be crossings of a diagram $D\in\mathfrak T$ such that $v_1v_2, v_2v_3\in E(G(D))$. Then $\epsilon(v_1v_3)=\epsilon(v_1v_2)+\epsilon(v_2v_3)$.
\end{itemize}

For some components of the graphs $G(D)$, $D\in\mathfrak T$, it may happen that the weights are not consistent. For example, consider the flat diagram $D$ in Fig.~\ref{pic:self-dual_tribe}. It has three crossings $u,v,w$. One can eliminate one of the crossings with a first Reidemeister move and apply a second Reidemeister move to the other two. Then $G(D)$ is the triangle $uvw$ and the weights of all the edges are equal to $1$. This contradicts the transitive closure rule of weight assignment.

\begin{figure}[h]
\centering
  \includegraphics[width=0.15\textwidth]{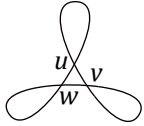}
  \caption{A diagram with a self-dual tribe}\label{pic:self-dual_tribe}
\end{figure}

If we encounter a contradiction of weights for some component $G'\subset G(D)$ then we call the component $G'$ \emph{self-dual} and its vertex set $C'=V(G')$ is called a \emph{self-dual tribe}. Then the tribe $C'$ consists of one phratry of the finest phratry system $\mathfrak P^u$ such that $(C')^*=C'$.

Let $G'\subset G(D)$ is a component which is not self-dual. Then due to the transitive closure rule, the vertex set $C'=V(G')$ splits into two subsets $C'_1$ and $C'_2$ such that any two crossings from one subset are connected by an even edge, and two crossings from different subsets are connected by an odd edge. The subsets $C'_1$ and $C'_2$ are dual phratries of the finest phratry system $\mathfrak P^u$.
\end{remark}

Let us describe the finest phratry system for diagrams of a flat tangle.

Let $T=K_1\cup\cdots\cup K_n$ be an oriented flat tangle in the surface $F$ and $\mathfrak T$ be the category of flat diagrams of $T$. In order to distinguish phratries we need to refine the flat component and homotopy types.

\begin{definition}\label{def:refined_flat_types}
1. Let $D=D_1\cup\cdots\cup D_n$ be a diagram of the tangle $T$, and $v$ be a crossing of $D$. Then $v$ is an intersection point of components $D_i$ and $D_j$. We order the component according to the orientation as shown in Fig.~\ref{pic:refined_flat_component_type}. The \emph{refined flat component type} of the crossing $v$ is the ordered pair $\tau^f(v)=(i,j)$.

\begin{figure}[h]
\centering
  \includegraphics[width=0.12\textwidth]{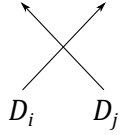}
  \caption{A crossing of refined flat component type $(i,j)$}\label{pic:refined_flat_component_type}
\end{figure}

Note that there are $n^2$ possible refined flat component types.

2. Let $i=j$ and $\partial D_i\ne\emptyset$. Let $s_i$ be the starting point of $D_i$. The crossing $v$ split the component $D_i$ into two halves $D^l_v$ and $D^r_v$ (Fig.~\ref{pic:knot_halves}) one of which is closed (it does not contain $s_i$) and the other is not (it contains $s_i$). We denote the closed half by $D^c_v$.

We define the \emph{refined order type} $o^f(v)$ of the crossing $v$ to be equal to $+1$ if $D^c_v=D^l_v$ and to be equal to $-1$ if $D^c_v=D^r_v$ (Fig.~\ref{pic:refined_flat_order_type})

\begin{figure}[h]
\centering
  \includegraphics[width=0.4\textwidth]{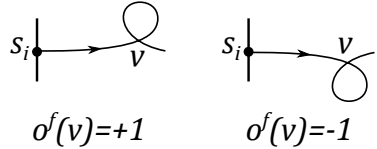}
  \caption{Refined order type}\label{pic:refined_flat_order_type}
\end{figure}

3. Let $i=j$ and $\partial D_i=\emptyset$.  Choose a non crossing point $z_i$ in the component $D_i$. Let $\kappa_{z_i}$ be the homotopy class of $D_i$ in the group $\pi_1(F,z_i)$.  Let $\bar\pi_D(F,z_i)$ be the quotient of $\pi_1(F,z_i)$ by the adjoint action of the element $\kappa_{z_i}$.

For the crossing $v$, choose a path $\gamma_{v,z_i}$ which connects $z_i$ with $v$ in $D_i$. Define the based left half as the loop $\hat D^l_{v,z_i}=\gamma_{v,z_i}D^l_v(\gamma_{v,z_i})^{-1}$ (Fig.~\ref{pic:based_left_half}). Analogously, $\hat D^r_{v,z_i}=\gamma_{v,z_i}D^r_v(\gamma_{v,z_i})^{-1}$.

\begin{figure}[h]
\centering
  \includegraphics[width=0.3\textwidth]{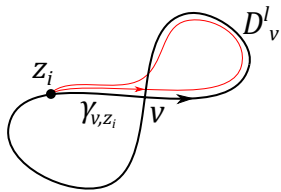}
  \caption{Based left half $\hat D^l_{v,z_i}$}\label{pic:based_left_half}
\end{figure}

The homotopy classes $\delta^l_{v,z_i}=[\hat D^l_{v,z_i}]$ and $\delta^r_{v,z_i}=[\hat D^r_{v,z_i}]$ in $\pi_1(F,z_i)$ depend on the choice of $\gamma_{v,z}$ but their images $\bar \delta^l_{v,z_i}$ and $\bar\delta^r_{v,z_i}$ in $\bar\pi_D(F,z_i)$ do not. Note that $\delta^l_{v,z_i}\delta^r_{v,z_i}=\kappa_{z_i}$ if $z_i\in D^r_v$ and $\delta^r_{v,z_i}\delta^l_{v,z_i}=\kappa_{z_i}$ if $z_i\in D^l_v$.

The element
\begin{equation}\label{eq:closed_refined_flat_homotopy_type}
  h^f_{D}(v)=\bar \delta^l_{v,z_i}\in \bar\pi_D(F,z_i)
\end{equation}
is called the \emph{refined flat homotopy type} of the self-crossing $v$ of the closed component $D_i$.

If $v$ is a self-crossing of a long component $D_i$ with the beginning point $s$ then the \emph{refined flat homotopy type} $h^f(v)$ of the crossing $v$ coincides with the homotopy type $h(v)$ defined by the formula~\eqref{eq:long_homotopy_type}:  $h^f_D(v)=h(v)\in\pi_1(F,s)$.

If $v$ is a mixed crossing of components $D_i$ and $D_j$, $i<j$ then the \emph{refined flat homotopy type} $h^f_D(v)$ of the crossing $v$ coincides with the homotopy type $h_{D,z_i,z_j}(v)$ defined by the formula~\eqref{eq:mixed_homotopy_type}: $h^f_{D}(v)=h_{D,z_i,z_j}(v)\in\bar\pi(F,z_i,z_j)$.
\end{definition}

We consider the following involutions on the sets which the types take values in:
\[
(i,j)^*=(j,i), \quad 1\le i,j\le n,
\]
for the refined flat component type,
\[
o^*=-o,\quad o\in\{-1,+1\},
\]
for the refined flat order type, and
\[
\bar x^*=\overline{\kappa_{z_i}x^{-1}},\quad \bar x\in\bar\pi_D(F,z_i)
\]
for the homotopy type of self-crossings of a closed component.

We define the involution to be the identity on the sets $\pi_1(F,s)$ and $\bar\pi(F,z_i,z_j)$ where the homotopy type of self-crossings of long components and mixed crossings takes value.

\begin{lemma}\label{lem:refined_flat_types_r2}
Let $v$ and $w$ be crossings of a flat tangle diagram $D$ to which a second Reidemeister move can be applied. Then
\begin{itemize}
  \item $\tau^f(w)=\tau^f(v)^*$;
  \item $o^f(w)=o^f(v)^*$ if $v$ and $w$ are self-crossings of a long component;
  \item $h_D(w)=h_D(v)^*$.
\end{itemize}
\end{lemma}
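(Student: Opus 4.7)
The plan is to verify each of the three claims separately by direct inspection of the local picture of a second Reidemeister move, leveraging the results already proved in Section~\ref{sect:tribes_tangles}. The common geometric input is that in an R2 bigon between $v$ and $w$, the two participating strand pieces swap their left/right roles from one crossing to the other, while the two small arcs inside the bigon are contractible and so may be ignored when computing homotopy classes.

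For the refined flat component type, the two strand pieces meeting at $v$ are literally the same two pieces meeting at $w$, just geometrically transposed. The orientation-based ordering convention that assigns the ordered pair $(i,j)$ to a flat crossing (Fig.~\ref{pic:refined_flat_component_type}) therefore reverses between $v$ and $w$, giving $\tau^f(w)=(j,i)=\tau^f(v)^*$.

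For the refined order type, applicable when $v$ and $w$ are self-crossings of a long component $D_i$, I would argue that the closed half $D^c$ (the half not containing the basepoint $s_i$) corresponds to the same topological subarc of $D_i$ at $v$ and at $w$: it is the component, after smoothing the relevant crossing, that does not contain $s_i$, and this is a topological invariant of the crossing. Since the left/right designation reverses between $v$ and $w$ by the geometric observation above, we have $D^c_v=D^l_v$ if and only if $D^c_w=D^r_w$, yielding $o^f(w)=-o^f(v)=o^f(v)^*$.

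For the refined flat homotopy type, the long-self-crossing and mixed-crossing cases are immediate: the involution is the identity on the target sets $\pi_1(F,s_i)$ and $\bar\pi(F,z_i,z_j)$, and R2-invariance of $h_D$ in these cases is already part of the proofs of Propositions~\ref{prop:long_selfcrossing} and~\ref{prop:mixed_crossings}. The only substantive case is self-crossings of a closed component $D_i$. Choosing $\gamma_{w,z_i}$ to agree with $\gamma_{v,z_i}$ outside the bigon, the left half smoothed at $v$ and the right half smoothed at $w$ represent the same loop in $F$ up to the contractible bigon arcs, so $\delta^l_{v,z_i}=\delta^r_{w,z_i}$ in $\pi_1(F,z_i)$. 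Combining with the identity $\delta^l_{w,z_i}\delta^r_{w,z_i}=\kappa_{z_i}$ (or its reverse, which gives the same element in the quotient $\bar\pi_D(F,z_i)$), I obtain
\[
\bar\delta^l_{w,z_i}=\overline{\kappa_{z_i}(\delta^r_{w,z_i})^{-1}}=\overline{\kappa_{z_i}(\delta^l_{v,z_i})^{-1}}=(\bar\delta^l_{v,z_i})^*,
\]
which is exactly $h^f_D(w)=h^f_D(v)^*$.

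The main obstacle is the careful bookkeeping of the left/right swap inside the R2 bigon together with the correct orientation conventions for defining $D^l$ versus $D^r$; once this local fact is established, all three verifications reduce to substitutions into the definitions of $\tau^f$, $o^f$, and $h^f$.
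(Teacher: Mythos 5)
Your proposal is correct and follows essentially the same route as the paper: each of the three claims is checked on the local picture of the $R_2$ bigon, with the only substantive case being self-crossings of a closed component, where you use the swap of left/right halves ($\delta^l_{v,z_i}=\delta^r_{w,z_i}$) together with the relation $\delta^l\delta^r=\kappa_{z_i}$ (or its reverse, equal in $\bar\pi_D(F,z_i)$) to land exactly on the involution $\sigma$. The bookkeeping of the left/right swap and the identification of the closed halves at $v$ and $w$ up to the contractible bigon arcs are exactly what the paper's figures encode.
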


\begin{proof}
The first statement of the lemma follows from the definition of the refined flat component type (Fig.~\ref{pic:refined_flat_component_type_r2}).

\begin{figure}[h]
\centering
  \includegraphics[width=0.2\textwidth]{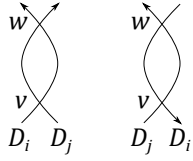}
  \caption{Mixed crossing in second Reidemeister moves}\label{pic:refined_flat_component_type_r2}
\end{figure}

If $v$ and $w$ a self-crossing of a long component then their refined order types are opposite by the definition of the type (see Fig.~\ref{pic:refined_flat_order_type_r2}).

\begin{figure}[h]
\centering
  \includegraphics[width=0.3\textwidth]{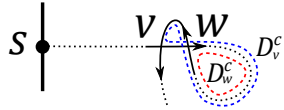}
  \caption{Refined order type in a second Reidemeister move}\label{pic:refined_flat_order_type_r2}
\end{figure}

Let $v$ and $w$ a self-crossing of a closed component. Then the left based half $\hat D^l_{w,z_i}$ is homotopic to the based right half $\hat D^r_{v,z_i}$ (Fig.~\ref{pic:refined_flat_homotopy_type_r2}).

\begin{figure}[h]
\centering\includegraphics[width=0.3\textwidth]{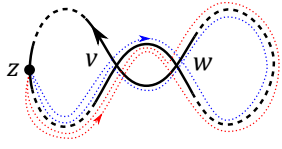}
\caption{Based halves $\hat D^r_{v,z}$ (blue) and  $\hat D^l_{w,z}$ (red)}\label{pic:refined_flat_homotopy_type_r2}
\end{figure}

Hence,
\[
h^f_{D,z_i}(w)=\overline{[\hat D^l_{w,z_i}]}=\overline{[\hat D^r_{v,z_i}]}=\overline{\kappa_{z_i}[\hat D^l_{w,z_i}]^{-1}}=\sigma\left(\overline{[\hat D^l_{v,z_i}]}\right)=\sigma\left(h^f_{D,z_i}(v)\right).
\]

By Theorem~\ref{thm:tribes_flat_tangle}, the homotopy types of $v$ and $w$ coincide if the crossings are mixed or they are self-crossings of a long component.
\end{proof}

Now, we can give a description of crossing phratries of flat tangle diagrams.

\begin{theorem}\label{thm:phratries_flat_tangles}
Let $T$ be an oriented flat tangle in the surface $F$ and $D$ be its diagram. Then two crossings $v$ and $w$ of the diagram $D$ belong to one phratry if and only if they have identical refined flat component types, refined order types (if $v$ and $w$ are self-crossing of a long component) and refined flat homotopy types.
\end{theorem}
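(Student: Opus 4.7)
The proof follows the template of Theorem~\ref{thm:tribes_flat_tangle} with an additional layer to handle the phratry duality.

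For the forward direction, I define a candidate phratry system $\mathfrak{P}^h$ on $\mathfrak T$ by declaring two crossings to lie in the same phratry iff all their refined flat types agree, and to lie in dual phratries iff the types are related by the involutions $*$ defined just before Lemma~\ref{lem:refined_flat_types_r2}. The axioms of Definition~\ref{def:phratry_system} are verified as follows: $(\Phi^\ast)$ is immediate from the involutions on the type sets; $(\Phi 0)$ holds because each refined flat type is a homotopy invariant of the flat diagram and is therefore preserved, for surviving crossings, under any flat Reidemeister move; and $(\Phi 2)$ is precisely Lemma~\ref{lem:refined_flat_types_r2}. Since the finest phratry system $\mathfrak{P}^u$ refines any phratry system on $\mathfrak T$, its equivalence classes are contained in those of $\mathfrak{P}^h$, so $v\sim_\Phi w$ in $\mathfrak{P}^u$ forces the refined flat types of $v$ and $w$ to coincide.

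For the backward direction, I adapt the lifting argument from Theorem~\ref{thm:tribes_flat_tangle} and additionally arrange that the signs at $v$ and $w$ in the lift coincide. Given matching refined flat types, I lift $D$ to a tangle diagram $\tilde D$ by choosing under/over arbitrarily at crossings other than $v,w$, and at $v$ and $w$ according to a uniform rule: mixed crossings of refined flat type $(i,j)$ are lifted with $D_i$ over; self-crossings of a long component are lifted as early overcrossings; and self-crossings of a closed component are lifted to have positive sign. By Proposition~\ref{prop:types_crossing_change}, a crossing change maps the pair (tangle type, sign) to the involution-image pair, so the refined flat types are precisely the crossing-change invariants of the orbits of such pairs. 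The uniform rule therefore selects matching representatives of these orbits at both $v$ and $w$, producing matching tangle component, order, and homotopy types together with matching signs. By Corollary~\ref{cor:phratries_surface_tangle}, $v\sim_\Phi w$ in the tangle phratry system on $\tilde D$, so there is a chain of tangle diagrams and Reidemeister moves establishing the equivalence via $(\Phi 0)$ and $(\Phi 2)$. Projecting each diagram to its underlying flat diagram and each tangle move to the corresponding flat move produces a valid chain in the flat category: $(\Phi 0)$-translates descend directly, and $(\Phi 2)$-dual pairs in the tangle (opposite-sign crossings in an R2) descend to $(\Phi 2)$-dual pairs in the flat by Lemma~\ref{lem:refined_flat_types_r2}. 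Hence $v\sim_\Phi w$ in $\mathfrak{P}^u$.

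The delicate step is the sign-matching in the backward direction: one must verify that the conventions in Definition~\ref{def:refined_flat_types} for $\tau^f$, $o^f$, and $h^f$ are arranged so that the uniform lift rule really does produce matching signs. This reduces to checking that $\tau^f$, $o^f$, and $h^f$ are precisely the crossing-change invariants of the pairs $(\tau,s)$, $(o,s)$, and $(\delta^+_{v,z},s)$ respectively, which is a direct calculation from Proposition~\ref{prop:types_crossing_change} combined with the sign convention for a crossing (for the closed self-crossing case, the identity $\sigma(\bar\delta^r_{v,z})=\bar\delta^l_{v,z}$ that follows from Lemma~\ref{lem:homotopy_type_basepoint_change}(1) is what makes $h^f$ equal $\bar\delta^+_{v,z}$ in a positive-sign lift).
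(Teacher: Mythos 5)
Your proof is correct, and its overall skeleton matches the paper's: the forward direction verifies that the refined flat types are consistent with the phratry axioms (your ``candidate phratry system'' $\mathfrak P^h$ is the paper's weighted graph $G(D)$ of Remark~\ref{rem:phratry_graph} in different clothing, and both rest on Lemma~\ref{lem:refined_flat_types_r2} for the $R_2$-duality step), while the backward direction reduces to the tangle case. Where you genuinely diverge is in how the backward direction is organized. The paper first invokes Theorem~\ref{thm:tribes_flat_tangle} to place $v$ and $w$ in one tribe, then rules out the dual-phratry alternative case by case using the fact (from the forward direction) that dual phratries carry involution-related types, and only falls back on a lift-and-project argument via Corollary~\ref{cor:phratries_surface_tangle} in the residual self-dual case $h_D(v)^*=h_D(v)$. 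You instead apply the lift-and-project argument uniformly to all three kinds of crossings, with a single sign-normalizing rule for the lift; this buys you a cleaner proof that never needs to discuss self-dual phratries, at the cost of having to verify the sign-matching claim in each case. That claim --- which you correctly flag as the delicate point --- does check out: for mixed crossings the orientation convention behind $\tau^f$ forces equal signs once the same component is put on top; for long self-crossings the sign of an early-overcrossing lift is a function of $o^f$ (the closed half of an early overcrossing is $D^-_v$, so $sgn(v)$ is determined by whether $D^c_v$ is the left or right half); and for closed self-crossings a positive lift gives $\delta^+_{v,z}=\delta^l_{v,z}$, so $h_{\tilde D}(v)=h^f_D(v)$ exactly as in the paper's self-dual case. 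The final descent of the witnessing chain from tangle diagrams to flat diagrams is the same flattening argument the paper uses in Theorem~\ref{thm:tribes_flat_tangle}, and it preserves the duality parity as you say.
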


\begin{proof}

Let $G(D)$ be the weighted graph from Remark~\ref{rem:phratry_graph}. We claim that for any crossings $v$ and $w$ which belong to one tribe (i.e. a connected component of $G(D)$) their refined flat component, order and homotopy types coincide if the edge weight $\epsilon(vw)$ is zero, and the types are dual (i.e. differ by the correspondent involutions) if the edge $vw$ is odd ($\epsilon(vw)=1$). This statement can be proved by the induction which defines the weights of the graph $G(D)$:
\begin{itemize}
\item $R_2$-duality is proved in Lemma~\ref{lem:refined_flat_types_r2};
\item Reidemeister translation follows from the fact that for any Reidemeister move $f\colon D\to D'$ and any vertices $v\in\V(D)$ and $v'\in\V(D)$ such that $v'=f_*(v)$ we have $\tau^f(v')=\tau^f(v)$, $o^f(v')=o^f(v)$ and $h^f_{D'}(v')=h^f_D(v)$;
\item transitive closure holds because the square of each of the involutions of the crossing types is identity.
\end{itemize}

Now, if $v$ and $w$ belong to one phratry the they are connected by an even edge of the graph $G(D)$. Thus, their refined flat types coincide.

Next, assume that the refined flat types of crossings $v$ and $w$ coincide. Then by definition $\bar\tau(v)=\bar\tau(w)$ and $\bar h(v)=\bar h(w)$. Hence, $v$ and $w$ belong to one tribe by Theorem~\ref{thm:tribes_flat_tangle}. This means $v$ and $w$ belong either to one phratry or to dual phratries.

If $v$ and $w$ are mixed crossings then the dual phratries are distinguished by the refined component type. Since $\tau^f(v)=\tau^f(w)$, they belong to one phratry.

If $v$ and $w$ are self-crossings of a long component then the dual phratries are distinguished by the refined order type. Since $o^f(v)=o^f(w)$, they belong to one phratry.

Let $v$ and $w$ are self-crossings of a closed component $D_i$. If $h_D(v)^*\ne h_D(v)$ then the dual phratries can be distinguished by the homotopy type. Since $h^f_D(v)=h^f_D(w)$, they belong to one phratry.

Assume that $h_D(v)^* = h_D(v)$. Then $\hat D^l_{v,z_i}=\hat D^r_{v,z_i}=\hat D^l_{w,z_i}=\hat D^r_{w,z_i}$. Lift the flat tangle diagram $D$ to a tangle diagram $\tilde D$ so that $v$ and $w$ be positive crossings in $\tilde D$. Then
\[
h_{\tilde D}(v)=\overline{[\hat {\tilde D}^+_{v,z_i}]}=\overline{[\hat D^{l/r}_{v,z_i}]}=\overline{[\hat D^{l/r}_{w,z_i}]}=\overline{[\hat {\tilde D}^{+}_{w,z_i}]}
=h_{\tilde D}(w).
\]
By Corollary~\ref{cor:phratries_surface_tangle} the crossings $v$ and $w$ belong to one phratry in $\tilde D$. Thus, they belong to one phratry in $D$.
\end{proof}

Let us describe the self-dual phratries of crossings. By Theorem~\ref{thm:phratries_flat_tangles} this can happen only for self-crossings of a closed component.

\begin{proposition}
Let $D$ be a flat tangle diagram, $D_i$ be a closed component of $D$ and $v$ be a self-crossing of $D_i$. Choose a non-crossing point $z_i$ in $D_i$. Let $\kappa_{z_i}\in \pi_1(F,z_i)$ be the homotopy class of $D_i$ and $\delta^l_{v,z_i}=[\hat D^l_{v,z_i}]\in\pi_1(F,z_i)$ be the homotopy class of a based left half at the crossing $v$. Then $v$ belongs to a self-dual phratry if  and only if
\begin{equation}\label{eq:self_dual_phratry}
  (\delta^l_{v,z_i})^2=\kappa_{z_i}.
\end{equation}
\end{proposition}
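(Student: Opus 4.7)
My plan is to reduce the self-dual phratry condition to an algebraic equation in $\pi_1(F,z_i)$ and then invoke two structural properties of fundamental groups of compact oriented surfaces. By Theorem~\ref{thm:phratries_flat_tangles}, the crossing $v$ belongs to a self-dual phratry precisely when its refined flat homotopy type $\bar\delta^l_{v,z_i}\in\bar\pi_D(F,z_i)$ is fixed by the involution $\bar x^*=\overline{\kappa_{z_i}x^{-1}}$; for self-crossings of a closed component this is the only involution that can act non-trivially (the refined component type $(i,i)$ is trivially self-dual, and there is no refined order type). Unwinding the quotient of $\pi_1(F,z_i)$ by the adjoint action of $\kappa_{z_i}$, the fixed-point condition becomes: there exists $n\in\Z$ with $xk^nx=k^{n+1}$, where I write $x=\delta^l_{v,z_i}$ and $k=\kappa_{z_i}$ for brevity.

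The implication ``$(\delta^l)^2=\kappa \Rightarrow$ self-dual'' is immediate: if $x^2=k$ then $k$ is a power of $x$, so $x$ and $k$ commute, and the displayed equation holds with $n=0$.

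For the converse, the key algebraic observation is the identity $(xk^n)^2=xk^n\cdot xk^n=k^{n+1}\cdot k^n=k^{2n+1}$. Setting $a=xk^n$, I have $a^2=k^{2n+1}$. If $k=1$, then $a^2=1$, and torsion-freeness of $\pi_1(F,z_i)$ forces $a=1$, hence $x=1$, and therefore $x^2=1=k$. If $k\ne 1$, then $k^{2n+1}\ne 1$ by torsion-freeness, so $a^2\ne 1$; both $a$ and $k$ commute with this common non-trivial element $a^2$, so commutative-transitivity of $\pi_1(F,z_i)$ (in any compact oriented surface, centralizers of non-identity elements are cyclic) forces $a$ and $k$ to commute. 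Then $x=ak^{-n}$ commutes with $k$ as well, and the equation $xk^nx=k^{n+1}$ collapses to $x^2=k$.

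The principal obstacle is the converse direction: in an arbitrary group the relation $xk^nx=k^{n+1}$ does \emph{not} imply $x^2=k$, so the argument rests essentially on group-theoretic properties of $\pi_1(F)$. I would make sure to cover uniformly the trivial case (sphere or disk), the free group case ($\partial F\ne\emptyset$ with $F$ not simply connected), and the closed surface group case, in all of which torsion-freeness and commutative-transitivity are classical facts.
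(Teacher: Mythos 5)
Your proof is correct, but the converse direction is handled by a genuinely different argument than the paper's. You both reduce self-duality to the same algebraic condition (your $xk^nx=k^{n+1}$ is exactly the paper's $b=\kappa_{z_i}^{l}a\kappa_{z_i}^{-l}$ after substituting $b=a^{-1}\kappa_{z_i}$), but to extract $x^2=k$ from it the paper passes to the covering $\tilde F\to F$ associated with the subgroup $H=\left<a,b\right>$, shows that the relation forces $a=b$ in $H/[H,H]$, and then uses the classification of surfaces to conclude that $H$ is abelian, hence $a=b$ and $\kappa_{z_i}=a^2$. You instead set $a=xk^{n}$, observe the clean identity $a^{2}=k^{2n+1}$, and invoke torsion-freeness plus commutative transitivity of surface groups to force $a$ and $k$ to commute, after which everything collapses. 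Both are valid; your route quotes two classical group-theoretic facts and avoids the covering-space detour (and it makes explicit where the hypothesis on $F$ enters, namely that $xk^nx=k^{n+1}$ does not imply $x^2=k$ in a general group), while the paper's route is self-contained modulo surface topology and does not need commutative transitivity as a named input. One small caveat: your parenthetical justification of commutative transitivity, ``centralizers of non-identity elements are cyclic,'' fails for the torus, where $\pi_1=\Z^2$; commutative transitivity still holds there because the group is abelian, so the argument survives, but you should state the property case by case (free groups, $\Z^2$, hyperbolic closed surface groups) rather than derive it uniformly from cyclicity of centralizers.
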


\begin{proof}
  For brevity denote $a=\delta^l_{v,z_i}$ and $b=\delta^r_{v,z_i}$. Without loss of generality assume that $z_i\in D^r_v$. Then $ab=\kappa_{z_i}$.

  By Theorem~\ref{thm:phratries_flat_tangles} the phratry of the crossing $v$ is self-dual if and only if $\bar a=\bar a^*$. Since $\bar a^*=\bar b$, the condition is $b=\kappa^l_{z_i}a\kappa^{-l}_{z_i}$ for some $l\in\Z$. If $l=0$ then $a=b$ and $\kappa_{z_i}=a^2$.

  Let $l>0$. Then $b(ab)^l=(ab)^la$. Consider the covering $\tilde F\to F$ which correspond to the subgroup $H=\left<a,b\right>\subset\pi_1(F,z_i)$ generated by $a$ and $b$. Then $\pi_1(\tilde F)=H$ and $H_1(\tilde F,\Z)=H/[H,H]$. In the group $H/[H,H]$ we have the relation $b+l(a+b)=l(a+b)+a$ which implies $a=b\in H/[H,H]$. Then the homology group $H_1(\tilde F,\Z)$ is cyclic, and the surface $\tilde F$ is a cylinder. Hence, $H=\pi_1(\tilde F)=H_1(\tilde F,\Z)$, and $a=b$ in $H$. Thus, $a=b$ in $\pi_1(F,z_i)$ and $\kappa_{z_i}=ab=a^2$.

The case $l<0$ is proved analogously. Thus, the self-duality condition is equivalent to  $\kappa_{z_i}=a^2$.
\end{proof}

\begin{corollary}
1. A closed component $D_i$ can have a self-dual phratry of self-crossings if and only if its homotopy type $\kappa_{z_i}$ is a square of an element of $\pi_1(F,z_i)$.

2. A closed component $D_i$ has at most one self-dual phratry.
\end{corollary}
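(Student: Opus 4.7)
The plan is to deduce both statements from the preceding proposition together with Theorem~\ref{thm:phratries_flat_tangles}, reusing the covering-space argument already employed in that proposition.

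For part 1, the direction $(\Rightarrow)$ is immediate from the preceding proposition: any self-crossing $v$ in a self-dual phratry satisfies $(\delta^l_{v,z_i})^2 = \kappa_{z_i}$, so $\kappa_{z_i}$ is a square. For $(\Leftarrow)$, given $\kappa_{z_i} = a^2$, I would represent $a$ by a based loop $\alpha$ at $z_i$ and observe that $\alpha\cdot\alpha$ represents $\kappa_{z_i}$, hence is freely homotopic in $F$ to the component $D_i$. Because a flat tangle in $F$ is defined as an equivalence class of immersions into $F\times(0,1)$ up to continuous homotopy (equivalently, modulo isotopy, Reidemeister moves and crossing changes), I am free to select a diagram of $T$ in which $D_i$ is realised as the immersed loop $\alpha\cdot\alpha$ with a transverse double point $v$ near $z_i$. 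The left half at $v$ is then $\alpha$ itself, so $\delta^l_{v,z_i}=a$ and $(\delta^l_{v,z_i})^2=\kappa_{z_i}$, which by the preceding proposition places $v$ in a self-dual phratry.

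For part 2, let $v$ and $w$ be self-crossings of $D_i$ lying in self-dual phratries, and write $a=\delta^l_{v,z_i}$, $b=\delta^l_{w,z_i}$; by the preceding proposition $a^2=b^2=\kappa_{z_i}$. By Theorem~\ref{thm:phratries_flat_tangles} it suffices to prove $\bar a=\bar b$ in $\bar\pi_D(F,z_i)$, and in fact I will show the stronger statement $a=b$ in $\pi_1(F,z_i)$. Mimicking the covering argument in the preceding proposition, I would consider $H=\langle a,b\rangle\subset\pi_1(F,z_i)$ together with the covering $\tilde F\to F$ with $\pi_1(\tilde F)=H$. The relation $a^2=b^2$ abelianises to $2(a-b)=0$ in $H_1(\tilde F,\Z)=H/[H,H]$, and the orientability of $\tilde F$ makes $H_1(\tilde F,\Z)$ torsion-free, so $a$ and $b$ already coincide in the abelianisation, and $H/[H,H]$ is cyclic. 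The classification of orientable surfaces then forces $\tilde F$ to be a disk, sphere, plane, or cylinder, so $H=\pi_1(\tilde F)$ is itself cyclic; in a cyclic group $a^2=b^2$ implies $a=b$, as required.

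The main obstacle lies in the converse of part 1: one has to insist that the abstract immersion $\alpha\cdot\alpha$ can be installed as the component $D_i$ inside an actual diagram of the given flat tangle $T$, not merely inside some flat knot representing $\kappa_{z_i}$. This is underwritten by the definition of a flat tangle as an equivalence class under continuous homotopy in $F\times(0,1)$, which allows each component to be replaced by any immersed representative of its free homotopy class independently of the others.
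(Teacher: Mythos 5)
Your proposal is correct and follows essentially the same route as the paper: part 1 is read off from the preceding proposition (with your explicit doubled-loop construction supplying the ``if'' direction that the paper leaves implicit), and part 2 rests on the uniqueness of square roots in a surface group, which the paper simply cites as known while you reprove it via the same covering-space/abelianisation trick already used in the proposition. No gaps.
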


The second statement of the corollary follows from the fact that any element in the fundamental group of a surface has at most one square root.

\section{Universal index on tangles in the surface $F$}\label{sect:universal_index_tribe}

Below we describe the universal index $\iota^u$ on tangles in the surface $F$.

Let $T=K_1\cup\cdots\cup K_n$ be a tangle in the surface $F$ and $\mathfrak T$ be its diagram category. Let $D=D_1\cup\cdots\cup D_n$ be a diagram of $T$.

By Theorem~\ref{thm:main_theorem} the component type $\tau$ is an index with coefficients in $\{1,\dots,n\}^2$. Then the universal index must distinguish the component type of crossings. Thus, we can consider crossings of different types separately.

\subsection{Self-crossings of a long component}

Let $D_i$ be a long component and $s_i\in\partial D_i$ be its beginning point.

The order type and the homotopy type define an index $o\times h$, $(o\times h)(v)=(o(v),h(v))$, with coefficients in $\Z_2\times\pi_1(F,s_i)$. Let us prove it is universal for the self-crossings of a long component.

\begin{proposition}\label{prop:long_index}
Let $\iota$ be an index with coefficients in a set $I$. Let $v$ and $w$ be self-crossings of the long component $D_i$. If $o(v)=o(w)$ and $h(v)=h(w)$ then $\iota(v)=\iota(w)$.
\end{proposition}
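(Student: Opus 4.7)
The plan is to deduce this directly from the tribe description already established for self-crossings of a long component, together with the general fact that indices are constant on tribes.

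First, I would observe that the hypotheses of the present proposition match exactly those of Proposition~\ref{prop:long_selfcrossing}: since $v$ and $w$ are both self-crossings of $D_i$, one has $\tau(v)=\tau(w)=(i,i)$, and we are additionally assuming $o(v)=o(w)$ and $h(v)=h(w)$. Proposition~\ref{prop:long_selfcrossing} then yields $v\sim w$ in the finest tribal system $\mathfrak{C}^u$.

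Second, by Proposition~\ref{prop:index_and_tribes}, any index $\iota$ on diagrams of $T$ takes the same value on two crossings lying in a common tribe. Combining these two steps gives $\iota(v)=\iota(w)$, as required.

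There is essentially no obstacle at this stage, since the real work has already been done in Lemma~\ref{lem:main_lemma} and Proposition~\ref{prop:long_selfcrossing}. I note that this proposition covers only the half of universality that asserts factorization: it says the map $o\times h\colon\V(D_i)\to\Z_2\times\pi_1(F,s_i)$ on self-crossings of a long component separates indices. To promote $o\times h$ to the universal index on self-crossings of $D_i$ one would still need to exhibit, for each pair $(\epsilon,g)$ realized by some crossing $v_0$ of some diagram, a well-defined value $\iota^u(v_0)=(\epsilon,g)\in I^u$ and check functoriality — but that is done in the subsequent construction of $\iota^u$, not here.
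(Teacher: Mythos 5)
Your proof is correct and matches the paper's argument: the paper also derives this proposition in one line from Theorem~\ref{thm:main_theorem} (whose long-component case is exactly Proposition~\ref{prop:long_selfcrossing}) together with Proposition~\ref{prop:index_and_tribes}. Your closing remark correctly identifies that the surjectivity/well-definedness half of universality is handled separately in Corollary~\ref{cor:long_universal_index}.
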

The proposition follows from Theorem~\ref{thm:main_theorem} and Proposition~\ref{prop:index_and_tribes}.

\begin{corollary}\label{cor:long_universal_index}
The index $o\times h$ is universal for self-crossings of the long component $D_i$.
\end{corollary}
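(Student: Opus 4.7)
The plan is to verify the universal property of Definition~\ref{def:universal_index} directly for the pair $(o\times h,\ \Z_2\times\pi_1(F,s_i))$, restricted to self-crossings of $D_i$. For any index $\iota$ on $T$ with coefficients in a set $I$, Proposition~\ref{prop:long_index} asserts that $\iota(v)$ depends only on $(o(v),h(v))$ when $v$ is a self-crossing of $D_i$. Setting $\psi(o(v),h(v))=\iota(v)$ therefore defines a map on the image of $o\times h$ with $\iota=\psi\circ(o\times h)$ on such crossings by construction, and uniqueness of $\psi$ on this image is forced by the defining equation.

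To promote uniqueness to the full coefficient set $\Z_2\times\pi_1(F,s_i)$, I would show that every pair $(\epsilon,g)$ is realised as $(o(v),h(v))$ for some self-crossing $v$ of $D_i$ in some diagram of $T$. Given $(\epsilon,g)$, fix a loop $\ell$ based at $s_i$ representing $g$. Starting from any diagram $D\in\mathfrak T$, isotope the initial arc of $D_i$ so that it first traces a narrow out-and-back bicollar along $\ell$ before continuing; this keeps the tangle unchanged, as the detour $\ell\cdot\ell^{-1}$ is contractible. Now apply an increasing second Reidemeister move pushing the return strand across the outgoing strand, creating two self-crossings $v_1,v_2$ of $D_i$. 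The middle arc at each $v_j$ is homotopic to $\ell$ based near $s_i$, so by the same argument as in Proposition~\ref{prop:long_selfcrossing} (compare Fig.~\ref{pic:homotopy_type_r2}) we obtain $h(v_j)=g$. The R2 move admits two distinct over/under configurations: selecting the outgoing strand as overcrossing yields $o(v_j)=+1$, while selecting it as undercrossing yields $o(v_j)=-1$, so both values of $\epsilon$ are attained.

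The main obstacle lies in the surjectivity construction: the bicollar isotopy and the R2 move must be arranged transverse to the rest of $D$, and one must verify that ambient crossings introduced during these operations do not interfere with the measurement of $h$ and $o$ at $v_1,v_2$. Once this is secured, $\psi$ is uniquely determined on all of $\Z_2\times\pi_1(F,s_i)$, completing the verification of the universal property.
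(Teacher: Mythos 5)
Your proposal is correct and follows essentially the same route as the paper: define $\psi$ via Proposition~\ref{prop:long_index} and realize every pair $(\epsilon,\alpha)\in\Z_2\times\pi_1(F,s_i)$ by creating a crossing with second Reidemeister moves along a loop representing $\alpha$ (the paper's Fig.~\ref{pic:homotopy_type_crossing}). Just take care that the $R_2$ move is performed between the tip of the detour and the initial arc at $s_i$, so that the middle arc of the new crossing traverses $\ell$ once rather than the contractible $\ell\ell^{-1}$ that would result from pushing the two parallel strands of the bicollar across each other partway along.
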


\begin{proof}
Let $\iota$ be an index with coefficients in a set $I$. We need to construct a map $\psi\colon\Z_2\times\pi_1(F,s_i)\to I$ such that for any self-crossing $v$ of $D_i$ one has $\iota(v)=\psi\circ(o\times h)(v)$.

Let $\epsilon\in\{-1,+1\}$ and $\alpha\in\pi_1(F,s_i)$. By applying second Reidemeister moves, create a crossing $v$ such that $o(v)=\epsilon$ and $h(v)=\alpha$ as shown in Fig.~\ref{pic:homotopy_type_crossing} if the diagram $D$ has no such a crossing. Then set $\psi(\epsilon, \alpha)=\iota(v)$.

\begin{figure}[h]
\centering\includegraphics[width=0.5\textwidth]{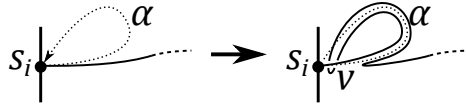}
\caption{A crossing with a given homotopy type $\alpha$}\label{pic:homotopy_type_crossing}
\end{figure}

By Proposition~\ref{prop:long_index} the map $\psi$ is well-defined, and by definition $\iota(v)=\psi\circ(o\times h)$.
\end{proof}

\subsection{Self-crossings of a closed component}

Let $D_i$ be a closed component. Unlike the long case, we can not use the group $\bar\pi_D(F,z_i)$ where the homotopy type takes values, as the coefficient set for a homotopy index because the point $z_i$ is not fixed.

Choose an arbitrary point $z^0_i\in F\setminus\partial F$. The following lemma shows that we can reduce (is some sense) the diagrams of the tangle $T$ to diagrams which contains the point $z^0_i$.

\begin{lemma}\label{lem:basing_morphism}
1. There exists a tangle diagram $\tilde D$ such that $z^0_i\in\tilde D_i$ and $\tilde D$ is obtained from $D$ by applying increasing second Reidemeister moves.

2. For any morphism $f\colon D\to D'$, i.e. a sequence of Reidemeister moves
\[
D=D^0\stackrel{f_1}{\rightarrow}D^1\rightarrow\cdots\rightarrow D^{n-1}\stackrel{f_n}{\rightarrow}D^n=D',
\]
 there exists a sequence of morphisms
 \[
 \tilde D^0\stackrel{\tilde f_1}{\rightarrow}\tilde D^1\rightarrow\cdots\rightarrow \tilde D^{n-1}\stackrel{\tilde f_n}{\rightarrow}\tilde D^n
 \]
such that $z^0_i\in\tilde D^k_i$ and $\tilde D^k$ is obtained from $D^k$ by applying increasing second Reidemeister moves, $k=0,\dots,n$.
\end{lemma}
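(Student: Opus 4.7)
My plan for part 1 is to realize $\tilde D$ through a single \emph{finger move}. Pick a non-crossing point $p$ on $D_i$; since $F$ is connected and $z^0_i \in F \setminus \partial F$, I can choose an embedded path $\eta \colon [0,1] \to F \setminus \partial F$ with $\eta(0) = p$, $\eta(1) = z^0_i$, transverse to $D$ and meeting $D$ only at finitely many interior points $q_1, \dots, q_m$ of edges of $D$ (avoiding both the crossings of $D$ and the boundary of the diagram). I then push a small arc of $D_i$ around $p$ along $\eta$ inside the thickening $F \times (0,1)$, producing a thin finger whose tip terminates at $z^0_i$. Between consecutive $q_j$'s the push is an ambient isotopy in $F$; at each $q_j$ the two parallel sides of the finger pass together over (or together under) the strand meeting $\eta$ at $q_j$, which is exactly an increasing second Reidemeister move. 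After $m$ such moves the tip of the finger contains $z^0_i$, so $z^0_i \in \tilde D_i$, as required.

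For part 2, I apply the construction of part 1 to each diagram $D^k$ independently, obtaining morphisms
\[
g_k \colon D^k \to \tilde D^k, \qquad k = 0, 1, \dots, n,
\]
each consisting of increasing second Reidemeister moves and ambient isotopies. Let $g_{k-1}^{-1} \colon \tilde D^{k-1} \to D^{k-1}$ denote the reverse sequence, in which the same crossings are now removed by decreasing second Reidemeister moves. I then set
\[
\tilde f_k = g_k \circ f_k \circ g_{k-1}^{-1} \colon \tilde D^{k-1} \to \tilde D^k.
\]
Each $\tilde f_k$ is a composition of Reidemeister moves, hence a morphism in $\mathfrak T$, and by construction each $\tilde D^k$ is obtained from $D^k$ via the increasing second Reidemeister moves $g_k$ alone, while $z^0_i \in \tilde D^k_i$. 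Note that the $\tilde f_k$ need not be single Reidemeister moves; the statement only asks for morphisms in the diagram category.

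The only delicate point is verifying that the passage of the finger across each $q_j$ really is a single increasing second Reidemeister move. Because $\eta$ is embedded and in general position with $D$, the two strands of the finger bound a thin bigon in $F$ which, between consecutive intersections with $D$, meets no other part of the diagram; near $q_j$ this bigon is locally the standard $R_2$ picture, so the passage across $q_j$ is literally an $R_2^+$ move, and the whole sequence can be carried out in the order $q_1, q_2, \dots, q_m$ without creating any extra crossings. This is the sole place where the embeddedness and general-position hypotheses on $\eta$ are used, and it suffices because second Reidemeister moves are themselves purely local.
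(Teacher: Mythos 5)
Your proof is correct as a verification of the literal statement, but in part 2 it takes a genuinely different route from the paper's. For part 1 you and the paper do essentially the same thing: pull a sprout (finger) from a point of $D_i$ to $z^0_i$ along a path transverse to $D$, each transverse intersection contributing one increasing second Reidemeister move. For part 2, however, you simply conjugate each $f_k$ by independently chosen basing morphisms, setting $\tilde f_k=g_k\circ f_k\circ g_{k-1}^{-1}$; since morphisms in $\mathfrak T$ are arbitrary compositions of Reidemeister moves, this does satisfy everything the lemma literally asserts, and it is noticeably more elementary. The paper instead realizes the whole sequence by a single ambient isotopy $\hat H_t$ of $F\times[0,1]$ that is the identity on an ``abyssal'' layer $F\times[0,\frac13]$ into which the tip of the sprout is anchored, so that the diagrams $\tilde D^k=p(\hat H_{t_k}(\hat T))$ all carry sprouts coherently transported by one isotopy. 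What that construction buys --- and what your version forgoes --- is control over the homotopy class of the sprout in $\tilde D'$ relative to the one in $\tilde D$ (it is $p(\gamma_1)p(\gamma)$, where $\gamma_1$ tracks the basepoint under the isotopy); this is precisely what Remark~\ref{rem:sprout_homotopy_type} records and what the subsequent definitions of the inner monodromy group and the homotopy index $h_i$ rely on. So your argument proves the lemma as stated, but to use it the way the paper does you would still need to supplement it with this coherence of the sprouts across the sequence.
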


\begin{proof}
1. The diagram $D$ is a projection of a tangle $T\subset F\times (0,1)$. Choose points $(z_i, s)\in T$ and $(z^0_i,s_0)\not\in T$ and connect them by a path $\gamma$ such that $\gamma\cap T=(z_i, s)$. Then pull a sprout from the point $(z_i, s)$ to the point $(z^0_i,s_0)$ along $\gamma$. We obtain a tangle $\tilde T$ (Fig.~\ref{pic:basing_diagram}). Its projection $\tilde D$ to $F$ obeys the required conditions.

\begin{figure}[h]
\centering\includegraphics[width=0.5\textwidth]{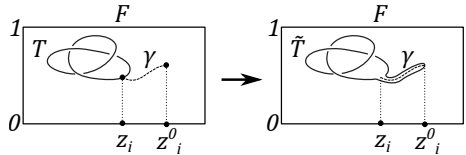}
\caption{The tangle $\tilde T$}\label{pic:basing_diagram}
\end{figure}

2. Denote the natural projection from $F\times(0,1)$ to $F$ by $p$. Let $T\subset F\times(0,1)$ be a tangle such that $D=p(T)$. Let $H_t\colon F\times[0,1]\to F\times[0,1]$, $t\in[0,1]$, be an isotopy which represents the morphism $f\colon D\to D'$. This means that $D^i=p(H_{t_i}(T))$ for some $t_0=0<t_1<\cdots<t_n=1$ such that for any $i=1,\dots,n$ the interval $(t_{i-1},t_i)$ contains exactly one singular value of the map $p\circ H_t\circ T$ which corresponds to the Reidemeister move $f_i$.

Push the tangle $T$ to the top by the map $(z, s)\mapsto (z, \frac 23+\frac s3)$ and replace the isotopy $H_t$ with the isotopy
\[
\hat H_t(z,s)=\left\{\begin{array}{cl}
                       H_t(z, 3s-2) & s\in[\frac 23,1], \\
                       H_{(3s-1)t}(z,0) & s\in[\frac 13, \frac 23], \\
                       (z,s) & s\in[0,\frac 13],
                     \end{array}\right.
\]
which is still in the ``abyssal'' layer $F\times[0,1/3]$. Then $\hat H_t$ also represents the morphism $f$.

Consider the tangle $\tilde T$ from the first part of the proof and ``drop the anchor'' from the point $(z^0_i,s_0)$ to the point $(z^0_i,\frac 13)$ and get a tangle $\hat T$ (Fig.~\ref{pic:basing_morphism}). Then define $\tilde D^i=p(\hat H_{t_i}(\hat T))$. By definitions of $\hat H_t$ and $\hat T$ the diagrams $D^i$ and $\tilde D^i$ differs by a sprout which ends in the point $p(z^0_i,\frac 13)=z^0_i$. One can pull this sprout with second Reidemeister moves. Thus, the diagrams $\tilde D^i$ satisfy the required conditions.
\begin{figure}[h]
\centering\includegraphics[width=0.5\textwidth]{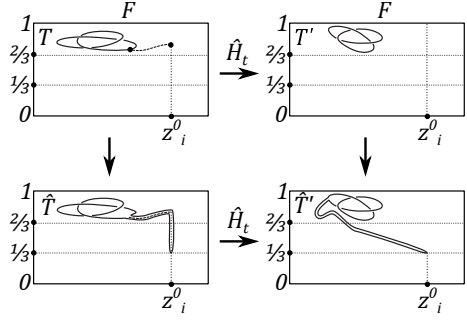}
\caption{The tangle $\hat T$}\label{pic:basing_morphism}
\end{figure}
\end{proof}

\begin{remark}\label{rem:sprout_homotopy_type}
The sprout which differs $\tilde T$ from $T$ goes along the curve $\gamma\gamma_0$ where $\gamma_0$ is the vertical segment connecting the points $(z^0_i,s_0)$ and $(z^0_i,\frac 13)$. Then the sprout which differs $\tilde T'$ from $T'$ goes along $\hat H_1(\gamma\gamma_0)$. Let $\gamma_1$ be defined by the formula $\gamma_1(t)=\hat H_{1-t}(z_i,s)$, $t\in[0,1]$. Then the path $\hat H_1(\gamma\gamma_0)$ is homotopic to the path $\gamma_1\gamma\gamma_0$, and the projection $p(\hat H_1(\gamma\gamma_0))$ is homotopic to $p(\gamma_1\gamma\gamma_0)=p(\gamma_1)p(\gamma)$.

Thus, the diagram $\tilde D$ is obtained from $D$ by pulling a sprout along a path in the homotopy class $p(\gamma)$, and $\tilde D'$ is obtained from $D'$ by pulling a sprout along  $p(\gamma_1)p(\gamma)$. In particular case, when $D$ contains $z^0_i$ one can choose $z_i=z^0_i$. Then $p(\gamma)$ is trivial, and the homotopy type of the sprout in $\tilde D'$ is $p(\gamma_1)$, i.e. the projection of the path that the point $(z_i,s)$ walks during the homotopy $\hat H_t$ (oriented in the opposite direction).
\end{remark}

Let us define the based diagram category $\mathfrak T_{z^0_i}$ which consists of the diagrams $D$ of the tangle $T$ for which the $i$-th component $D_i$ includes $z^0_i$ as a non-crossing point. The morphisms of $\mathfrak T_{z^0_i}$ are compositions of isotopies and Reidemeister moves between diagrams in $\mathfrak T_{z^0_i}$.

Fix a diagram $D^0=D^0_1\cup\cdots\cup D^0_n\in \mathfrak T_{z^0_i}$. Denote by $\mathfrak T^0_{z^0_i}$ the subcategory consisting of the diagrams in $\mathfrak T_{z^0_i}$ which are connected by a morphism with the diagram $D^0$.

Let us denote by $\bar\pi^0(F,z^0_i)$ the quotient of the group $\pi_1(F,z^0_i)$ by the adjoint action of the element $\kappa_{z^0_i}=[D^0_i]$. For any self-crossing $v$ of the closed component $D_i$ of a tangle diagram $D=D_1\cup\cdots\cup D_n\in\mathfrak T^0_{z^0_i}$, its homotopy type $h_{D,z^0_i}(v)$ is an element of $\bar\pi^0(F,z^0_i)$ (note that $\bar\pi^0(F,z^0_i)=\bar\pi_D(F,z^0_i)$ because $D_i$ and $D^0_i$ are homotopic as loops with the end $z^0_i$). Let us denote this element by $h^0_i(v)$.

\begin{proposition}\label{prop:based_closed_universal_index}
The map $h^0_i$ is the universal index for self-crossings of the closed component $D_i$ on the diagram category $\mathfrak T^0_{z^0_i}$.
\end{proposition}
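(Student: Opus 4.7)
The plan is to verify the two defining properties of the universal index: first, that $h^0_i$ is an index on $\mathfrak T^0_{z^0_i}$, and second, that every index on $\mathfrak T^0_{z^0_i}$ factors uniquely through $h^0_i$. The first property is essentially a restatement of Proposition~\ref{prop:closed_selfcrossings}: the class $h^0_i$ is invariant under Reidemeister moves, yielding (I0), and the two crossings destroyed in a decreasing second Reidemeister move share the same based positive half, yielding (I2). Note that for any $D\in\mathfrak T^0_{z^0_i}$ one has $\bar\pi_D(F,z^0_i)=\bar\pi^0(F,z^0_i)$ because $D_i$ is homotopic to $D^0_i$ as a loop based at $z^0_i$.

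For universality, let $\iota$ be an index valued in a set $I$. Surjectivity of $h^0_i$ onto $\bar\pi^0(F,z^0_i)$ is established by the sprout construction: given $\alpha\in\bar\pi^0(F,z^0_i)$, fix a loop $\gamma_\alpha$ based at $z^0_i$ representing $\alpha$, and apply an increasing second Reidemeister move to $D^0$ dragging a small arc of $D^0_i$ near $z^0_i$ along $\gamma_\alpha$ over the rest of the diagram and back. The resulting diagram $\tilde D^\alpha\in\mathfrak T^0_{z^0_i}$ contains a new self-crossing $v^+_\alpha$ of $D_i$ whose based positive half is homotopic to $\gamma_\alpha$, so that $h^0_i(v^+_\alpha)=\alpha$. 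Setting $\psi(\alpha):=\iota(v^+_\alpha)$ gives a candidate for the factoring map, and surjectivity of $h^0_i$ immediately forces $\psi$ to be unique once $\iota=\psi\circ h^0_i$ is established.

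Both the well-definedness of $\psi$ and the factorization $\iota=\psi\circ h^0_i$ reduce to a single claim: if $u_1\in\V(E_1)$ and $u_2\in\V(E_2)$ are self-crossings of $D_i$ in diagrams $E_1,E_2\in\mathfrak T^0_{z^0_i}$ with $h^0_i(u_1)=h^0_i(u_2)=\alpha$, then $\iota(u_1)=\iota(u_2)$. If $E_1=E_2$, Theorem~\ref{thm:main_theorem} gives $u_1\sim u_2$ directly and Proposition~\ref{prop:index_and_tribes} closes the argument. In the general case, perform the sprout construction at $z^0_i$ along $\gamma_\alpha$ inside $E_j$ to produce in an extended diagram $\hat E_j\in\mathfrak T^0_{z^0_i}$ a new crossing $w_j$ with $h^0_i(w_j)=\alpha$, without disturbing $u_j$; the same-diagram case then gives $\iota(u_j)=\iota(w_j)$ for $j=1,2$.

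The main obstacle is comparing $\iota(w_1)$ with $\iota(w_2)$ across the diagrams $\hat E_1$ and $\hat E_2$. The resolution uses that $E_1$ and $E_2$ are connected by a morphism $f$ in $\mathfrak T^0_{z^0_i}$ (both are in the same category whose connectedness is guaranteed by Lemma~\ref{lem:basing_morphism}); performing the sprout construction after $f$ rather than before produces a diagram containing a third crossing $w_3$ with $h^0_i(w_3)=\alpha$ in a common extension $\hat E$ of $\hat E_2$. By (I0) $\iota(w_1)$ is preserved through $f$ extended to act trivially on the sprout region (the sprout is a localized modification at $z^0_i$, so it commutes with morphisms supported elsewhere), and by Theorem~\ref{thm:main_theorem} within $\hat E$ any two crossings of $D_i$ with homotopy type $\alpha$ lie in a single tribe, giving $\iota(w_1)=\iota(w_3)=\iota(w_2)$. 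Chaining these equalities proves the claim and therefore the proposition.
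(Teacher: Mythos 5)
Your proof follows essentially the same route as the paper's: define $\psi$ by evaluating $\iota$ on a sprout-constructed crossing of prescribed homotopy type $\alpha$, and establish well-definedness by transporting an auxiliary crossing with type $\alpha$ through a morphism connecting the two diagrams, using (I0) across the morphism and Theorem~\ref{thm:main_theorem} together with Proposition~\ref{prop:index_and_tribes} within a single diagram. The one place where you are looser than the paper is the claim that the sprout ``commutes with morphisms supported elsewhere'': morphisms of $\mathfrak T^0_{z^0_i}$ are not supported away from $z^0_i$ in $F$ (they merely fix the base point), so the survival of the auxiliary crossing under $f$ with its homotopy type intact needs justification; the paper obtains it by compressing the isotopy realizing $f$ into the layer $F\times[\tfrac13,1]$ and tying the auxiliary crossing in the undisturbed abyssal layer $F\times[0,\tfrac13]$, exactly as in the second part of Lemma~\ref{lem:basing_morphism}.
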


\begin{proof}
  The map $h^0_i$ is an index with coefficients in $\bar\pi^0(F,z^0_i)$ by Theorem~\ref{thm:main_theorem}. Then we need to show that for any index $\iota$ with coefficients in a set $I$ there exists a unique map $\psi^0\colon \bar\pi^0(F,z^0_i)\to I$ such that $\iota(v)=\psi^0(h^0_i(v))$ for any self-crossing $v\in\V(D)$, $D\in \mathfrak T^0_{z^0_i}$, of the component $D_i$.

Like in corollary~\ref{cor:long_universal_index}, for an element $\alpha\in\bar\pi^0(F,z^0_i)$ we define the map $\psi^0$ by the formula $\psi^0(\alpha)=\iota(v)$ where $v$ is a crossing with the homotopy type $h^0_i(v)=h_{D,z^0_i}(v)=\alpha$. Let us prove that the map $\psi^0$ is well-defined.

Let $\alpha\in\bar\pi^0(F,z^0_i)$. Let $v\in\V(D)$ and $v'\in\V(D')$, $D,D'\in\mathfrak T^0_{z^0_i}$ be two crossings such that $h^0_i(v)=h^0_i(v')=\alpha$. Since $D,D'\in\mathfrak T^0_{z^0_i}$ there exist a morphism $f\colon D\to D'$ in the category $\mathfrak T^0_{z^0_i}$. Then there is a tangle $T\in F\times(0,1)$ and an isotopy $H_t\colon F\times[0,1]\to F\times[0,1]$, $t\in[0,1]$, which realizes $f$. That is $p(T)=D, p(H_1(T))=D'$ and for some $s_0\in(0,1)$  one has $(z^0_i,s_0)\in T_i$ where $T_i$ is the $i$-th component of $T$ and $H_t(z^0_i,s_0)=(z^0_i,s_0)$ for any $t$. Like in Lemma~\ref{lem:basing_morphism}, we can assume that the isotopy $H_t$ does not move the abyssal layer $F\times[0,\frac 13]$. Then we sink the point $(z^0_i,s_0)$ and tie a crossing with the homotopy type $\alpha$ in the abyssal layer (Fig.~\ref{pic:based_homotopy_index}). We get a tangle $\tilde T$ and a crossing $w$ in the diagram $\tilde D=p(\tilde T)$ such that $h^0_i(w)=\alpha$.

\begin{figure}[h]
\centering\includegraphics[width=0.5\textwidth]{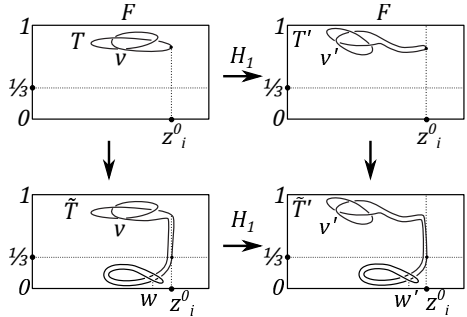}
\caption{The tangle $\tilde T$}\label{pic:based_homotopy_index}
\end{figure}

The isotopy $H_1$ does not disturb the crossing $w$. Then it corresponds to a crossing $w'$ in the diagram $\tilde D'=p(\tilde T')$ where $\tilde T'=H_1(\tilde T)$. By construction $h^0_i(w')=h^0_i(w)=\alpha$. By property (I0) we have $\iota(w)=\iota(w')$. On the other hand, by Theorem~\ref{thm:main_theorem} $h^0_i(v)=h^0_i(w)$ implies $v\sim w$ in $\tilde D$. Then $\iota(v)=\iota(w)$. Analogously, $\iota(v')=\iota(w')$. Hence, $\iota(v)=\iota(v')$.

Thus, we has proved that $\psi^0$ is well-defined. The equality $\iota(v)=\psi^0(h^0_i(v))$ follows from the definition.
\end{proof}

Now let us extend the homotopy index to all diagrams. Let $\iota$ be an index with coefficients in a set $I$ on the diagram category $\mathfrak T$.

Let $D\in \mathfrak T$ be a diagram of the tangle $T$. Choose a non-crossing point $z_i\in D_i$. There is a morphism in $\mathfrak T$ from $D^0$ to $D$. The morphism is realized by an isotopy $H_t$, $t\in[0,1]$ of $F\times[0,1]$. We can suppose that $H_1$ maps $z^0_i$ to the point $z_i$. More precisely, let $T^0\subset F\times(0,1)$ be a tangle such that $p(T^0)=D^0$ and $y^0_i=p^{-1}(z^0_i)\cap T^0$. Then $D=p(H_1(T^0))$ and we suppose that $p(H_1(y^0_i))=z_i$. Denote the point $H_1(y^0_i)$ by $y_i$.

By Lemma~\ref{lem:basing_morphism} there is a morphism $f\colon D^0\to\tilde D$ in $\mathfrak T^0_{z^0_i}$ where the diagram $\tilde D$ is obtained from $D$ by pulling a sprout from the point $z_i$ to the point $z^0_i$. By Remark~\ref{rem:sprout_homotopy_type} the sprout is pulled along a path which is homotopic to the path $\bar\gamma_D=p(H_{1-t}(y_i))$, $t\in[0,1]$.

Since the pulling process consists of increasing second Reidemeister moves, the set of crossings $\V(D)$ embeds in the set of crossings $\V(\tilde D)$. By property (I0) the value $\iota(v)$ of a crossing $v$ in $D$ coincides with the value $\iota(v)$ of this crossing in the diagram $\tilde D$. Since $\tilde D\in\mathfrak T^0_{z^0_i}$, by Proposition~\ref{prop:based_closed_universal_index} for any $v\in\tilde D$ $\iota(v)=\psi^0(h^0_i(v))$. By definition of $h^0_i$, for any $v\in\V(D)$
\[
h^0_i(v)=h_{\tilde D,z^0_i}(v)=[\hat D^+_{v,z^0_i}]=\bar\gamma_D^{-1}[\hat D^+_{v,z_i}]\bar\gamma_D=\bar\gamma_D^{-1}h_{D,z_i}(v)\bar\gamma_D.
\]

The elements $h^0_i(v)$ depends on $\gamma_D$ and hence on the isotopy $H_t$.

Let $H'_t$, $t\in[0,1]$, be another isotopy from $D^0$ to $D$ such that $H'_1(y^0_i)=y_i$. Denote the path $p(H'_{1-t}(y_i))$ by $\bar\gamma'_D$. Then $H^{-1}_t\circ H_t$ is an isotopy from the diagram $D^0$ to itself, and $\bar\gamma=\bar\gamma_D^{-1}\bar\gamma'_D$ is the homotopy class of the path of the base point $z^0_i$ under this isotopy.

On the other hand, if $H^0_t$ be an isotopy of $D^0$ to itself such that $H^0_1(y^0_i)=y^0_i$ and $\bar\gamma = p(H^0_t(y^0_i))$, $t\in[0,1]$, then the composition $H'_t=H_t\circ H^0_t$ is an isotopy from $D^0$ to $D$ with $\bar\gamma'_D=\bar\gamma_D\bar\gamma^{-1}$.

These reasonings lead us to the following definition.

\begin{definition}\label{def:inner_monodromy_group}
Let $D=D_1\cup\cdots\cup D_n$ be a tangle diagram, $D_i$ be a closed component of $D$ and $z\in D_i$ be a non-crossing point. Let $T=T_1\cup\cdots\cup T_n\subset F\times(0,1)$ be a tangle such that $p(T)=D$ and $p(T_i)=D_i$. Let $y=p^{-1}(z)\cap T_i$. Define the \emph{inner monodromy group} $IM_i(D,z)$ as the subgroup of $\pi_(F,z)$ consisting of the homotopy classes of the loop $\gamma(t)=p(H_t(y))$, $t\in[0,1]$, where $H_t\colon F\times[0,1]\to F\times[0,1]$ is an isotopy such that $H_1(T)=T$ and $H_1(y)=y$.
\end{definition}

\begin{definition}\label{def:closed_homotopy_index}
Let $\hat\pi_{D^0}(F,z^0_i)$ the quotient set of the group $\pi_1(F,z^0_i)$ by the adjoint action of the inner monodromy group $IM_i(D^0,z^0_i)$. For any crossing $v\in\V(D)$ of a diagram $D\in\mathfrak T$, we define its \emph{homotopy index} $h_i(v)$ as the element
\begin{equation}\label{eq:closed_homotopy_index}
h_i(v)=\widehat{\bar\gamma_D^{-1}[\hat D^+_{v,z_i}]\bar\gamma_D}=\widehat{\bar\gamma_D^{-1}h_{D,z_i}(v)\bar\gamma_D}\in\hat\pi_{D^0}(F,z^0_i).
\end{equation}
\end{definition}

\begin{proposition}\label{prop:universal_closed_index}
The map $h_i$ is the universal index for self-crossings of the closed component $D_i$ on the diagram category $\mathfrak T$.
\end{proposition}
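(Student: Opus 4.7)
The plan is to extend the argument of Proposition~\ref{prop:based_closed_universal_index} from the based subcategory $\mathfrak T^0_{z^0_i}$ to all of $\mathfrak T$. First I would verify that $h_i$ is well-defined on $\mathfrak T$: the discussion preceding Definition~\ref{def:inner_monodromy_group} shows that two isotopies $H_t, H'_t$ from $D^0$ to $D$ yield paths $\bar\gamma_D, \bar\gamma'_D$ with $\bar\gamma_D^{-1}\bar\gamma'_D$ representing an element of $IM_i(D^0,z^0_i)$, so quotienting by the adjoint action of this group removes the ambiguity. Independence of the choice of $z_i\in D_i\setminus\V(D)$ is handled as in Lemma~\ref{lem:homotopy_type_basepoint_change}(2), using that a path in $D_i$ between two basepoints conjugates the computation by the same amount on both sides.

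Next I would check that $h_i$ satisfies (I0) and (I2). For (I0), given $f\colon D\to D'$ with $v'=f_*(v)$, I concatenate the isotopy realizing $D^0\to D$ with the isotopy realizing $f$; the resulting $\bar\gamma_{D'}$ differs from $\bar\gamma_D$ only by the trajectory that $z_i$ traces during $f$. Since $h_{D',z'_i}(v')$ is obtained from $h_{D,z_i}(v)$ by the inverse of this same trajectory (cf.\ the proof of Proposition~\ref{prop:closed_selfcrossings}), the contributions cancel in the conjugate, giving $h_i(v)=h_i(v')$. Property (I2) is immediate from Proposition~\ref{prop:closed_selfcrossings}, which gives $h_{D,z_i}(v_1)=h_{D,z_i}(v_2)$ for the two crossings $v_1,v_2$ of a decreasing $R_2$ move, hence $h_i(v_1)=h_i(v_2)$.

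For universality, given any index $\iota$ with coefficients in $I$, I would define $\psi\colon\hat\pi_{D^0}(F,z^0_i)\to I$ by picking a representative $\tilde\alpha\in\pi_1(F,z^0_i)$ of a class $\alpha$, creating in $D^0$ a self-crossing $v$ of $D^0_i$ with $h^0_i(v)=\bar{\tilde\alpha}$ via second Reidemeister moves (as in the proof of Proposition~\ref{prop:based_closed_universal_index}), and setting $\psi(\alpha)=\iota(v)$. The equality $\iota(v)=\psi(h_i(v))$ for a general $v\in\V(D)$ then follows by pulling a sprout from $z_i$ to $z^0_i$ along $\bar\gamma_D$ via Lemma~\ref{lem:basing_morphism}, reducing to a diagram in $\mathfrak T^0_{z^0_i}$ where Proposition~\ref{prop:based_closed_universal_index} applies and the class $\bar\gamma_D^{-1} h_{D,z_i}(v) \bar\gamma_D$ is precisely the relevant based homotopy type. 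Uniqueness of $\psi$ is automatic because every class in $\hat\pi_{D^0}(F,z^0_i)$ is hit by some $h_i(v)$.

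The main obstacle is the well-definedness of $\psi$ on the quotient by $IM_i(D^0,z^0_i)$: if $\tilde\alpha' = \gamma\tilde\alpha\gamma^{-1}$ with $\gamma\in IM_i(D^0,z^0_i)$, I must show the corresponding values of $\iota$ agree. Unpacking the definition of the inner monodromy group, there is a self-isotopy $H^0_t$ of $F\times[0,1]$ with $H^0_1(T^0)=T^0$, $H^0_1(y^0_i)=y^0_i$, and $p(H^0_t(y^0_i))$ representing $\gamma$. Applying this isotopy to a diagram containing a crossing $v$ with $h^0_i(v)=\bar{\tilde\alpha}$ produces, via a sequence of Reidemeister moves, a crossing $v'$ with $h^0_i(v')=\overline{\gamma\tilde\alpha\gamma^{-1}}$; property (I0) for $\iota$ along this sequence yields $\iota(v)=\iota(v')$, which combined with Proposition~\ref{prop:based_closed_universal_index} inside $\mathfrak T^0_{z^0_i}$ gives the required independence and closes the argument.
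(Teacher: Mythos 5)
Your proposal is correct and follows essentially the same route as the paper: reduce a general diagram to the based subcategory $\mathfrak T^0_{z^0_i}$ via Lemma~\ref{lem:basing_morphism}, invoke Proposition~\ref{prop:based_closed_universal_index} there, and absorb the ambiguity in the choice of isotopy $D^0\to D$ into the adjoint action of $IM_i(D^0,z^0_i)$. Your treatment of the well-definedness of $\psi$ on the quotient (realizing $\gamma\in IM_i(D^0,z^0_i)$ by a self-isotopy carrying a test crossing of based type $\tilde\alpha$ to one of type $\gamma\tilde\alpha\gamma^{-1}$) is more explicit than the paper's terse appeal to ``the reasonings above,'' but it is the same mechanism already used in the proof of Proposition~\ref{prop:based_closed_universal_index}.
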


\begin{proof}
The factorization by the adjoint action in $\hat\pi_{D^0}(F,z^0_i)$ ensures that $h_i(v)$ is well defined. The index property (I0) holds by definition and the property  (I2) follows from the correspondent property of the homotopy type $h_{D,z_i}$.

Let $\iota$ be an index with coefficients in a set $I$ on the diagram category $\mathfrak T$. We define a map $\psi\colon\hat\pi_{D^0}(F,z^0_i)\to I$ by the formula $\psi(\hat\alpha)=\iota(v)$, $\alpha\in\pi_1(F,z^0_i)$ where $v$ is a crossing of a diagram $D\in\mathfrak T^0_{z^0_i}$ such that $[\hat D^+_{v,z^0_i}]=\alpha$.

Let us prove that the map $\psi$ is well-defined. Let $v\in\V(D)$, $D\in\mathfrak T$, be a crossing. The reasonings above show that the value $\iota(v)$ is determined by the homotopy class $\bar\gamma_D^{-1}[\hat D^+_{v,z_i}]\bar\gamma_D$. Since we can choose any class $\bar\gamma_D$ induced from an isotopy from $D^0$ to $D$ and such classes differ by elements of the inner monodromy group, the value $\iota(v)$ can be restored from any representative of the orbit of the homotopy class by the adjoint action of $IM_i(D^0,z^0_i)$, that is from the element $h_i(v)$. Thus, $\psi$ if well-defined.

The equality $\iota=\psi\circ h_i$ follows from the definition.
\end{proof}

Let us enumerate some properties of the inner monodromy group.

\begin{proposition}\label{prop:inner_monodromy_group}
Let $D$ be a tangle diagram of the tangle $T$, $D_i$ be a closed component of $D$ and $z\in D_i$ be a non-crossing point.
\begin{enumerate}
  \item Let $D'\in\mathfrak T$ be another diagram of $T$ and $z'\in D'_i$ be a non-crossing point in the $i$-th component. Then $IM_i(D',z')\simeq IM_i(D,z)$.
  \item Let $\kappa_z=[D_i]\in\pi_1(F,z)$ be the homotopy class of the component $D_i$. Then $\kappa_z\in IM_i(D,z)$.
  \item $IM_i(D,z)\subset C(\kappa_z)$ where $C(\kappa_z)=\{\alpha\in\pi_1(F,z) \mid \alpha\kappa_z\alpha^{-1}=\kappa_z\}$ is the centralizer of the element $\kappa_z$.
\end{enumerate}
\end{proposition}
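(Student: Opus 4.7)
The strategy is to handle the three parts separately, using ambient isotopies in $F \times [0,1]$ as the main tool.

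For part (1), I would fix tangle representatives $T, T' \subset F \times (0,1)$ with $p(T) = D$ and $p(T') = D'$, and lifts $y = p^{-1}(z) \cap T_i$, $y' = p^{-1}(z') \cap T'_i$. Since $T$ and $T'$ are isotopic, there exists an ambient isotopy $\Phi_t$ of $F \times [0,1]$ with $\Phi_0 = \mathrm{id}$, $\Phi_1(T) = T'$, and, after a small adjustment along $T_i$, $\Phi_1(y) = y'$. Given an isotopy $H_t$ witnessing an element of $IM_i(D, z)$, the conjugate $H'_t = \Phi_1 \circ H_t \circ \Phi_1^{-1}$ fixes $T'$ and $y'$ at $t = 1$, and the loop it produces is $\gamma'(t) = p(\Phi_1(H_t(y)))$. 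Composing with the change-of-basepoint isomorphism along $\delta(t) = p(\Phi_t(y))$ yields a homomorphism $IM_i(D, z) \to IM_i(D', z')$; the symmetric construction using $\Phi_{1-t}$ gives the inverse.

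For part (2), I would construct an explicit ambient isotopy whose associated loop is $\kappa_z$. Since $T_i$ is an embedded circle in the $3$-manifold $F \times (0,1)$, it admits a tubular neighborhood $N \cong S^1 \times D^2$ with core $T_i$; after shrinking we may assume $N \cap (T \setminus T_i) = \emptyset$ and $y$ corresponds to $(1, 0)$. Define $H_t$ on $N$ by $(e^{i\theta}, w) \mapsto (e^{i(\theta + 2\pi t \rho(|w|))}, w)$, where $\rho \colon [0,1] \to [0,1]$ is smooth, equal to $1$ near $0$ and to $0$ near $1$, and extend by the identity outside $N$. Then $H_1$ preserves $T$ (identity outside $N$, rigid rotation on the core inside), fixes $y$, and $\gamma(t) = p(H_t(y))$ parametrizes $D_i$ once, so $[\gamma] = \kappa_z \in IM_i(D, z)$.

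For part (3), given $[\gamma] \in IM_i(D, z)$ realized by an isotopy $H_t$, the condition $H_1(T) = T$ together with $H_1(y) = y$ and the connectedness of $T_i$ force $H_1(T_i) = T_i$ as an oriented loop through $y$. The family $t \mapsto p(H_t(T_i))$ is then a free homotopy from $D_i$ to itself whose base point traces out $\gamma$. The standard dictionary between free homotopies of loops and conjugation in the fundamental group gives $[\gamma]\, \kappa_z\, [\gamma]^{-1} = \kappa_z$, i.e.\ $[\gamma] \in C(\kappa_z)$.

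The main technical point I anticipate lies in part (2): one must verify that the twist defined on the tubular neighborhood extends to a genuine smooth ambient isotopy of $F \times [0,1]$ that leaves the other components of $T$ untouched. This reduces to the standard fact that an embedded circle in a $3$-manifold has a product tubular neighborhood, inside which the rotation with a radial cutoff is smooth and supported away from $\partial N$.
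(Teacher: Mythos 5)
Your proof is correct and follows essentially the same route as the paper: part (1) by transporting the defining isotopies along the connecting isotopy and applying the change-of-basepoint isomorphism, part (2) by an isotopy pushing $T_i$ along itself (which the paper only describes informally and you realize explicitly as a cutoff twist in a tubular neighborhood), and part (3) by reading off the conjugation relation from the free homotopy $t\mapsto p(H_t(T_i))$ whose basepoint traces $\gamma$. No gaps; the extra care you take with the tubular-neighborhood construction in (2) only strengthens the argument.
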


\begin{proof}
1. Since $D$ and $D'$ are diagrams of one tangle, there is a morphism $f\colon D\to D'$. Let $H_t$, $t\in[0,1]$, be an isotopy of $F\times[0,1]$ which realizes the morphism $f$. That means $D=p(T)$ for a tangle $T$ and $D'=p(H_1(T))$. We can also assume that $z'=p(H_1(y))$ where $y=p^{-1}(z)\cap T$. Let $\gamma(t)=p(H_t(y))$, $t\in[0,1]$ be the path which the point $z$ pass during the isotopy. Then the isomorphism
\[
\gamma_*\colon\pi_1(F,z)\to\pi_1(F,z'),\quad \alpha\mapsto\gamma^{-1}\alpha\gamma,
\]
maps the subgroup $IM_i(D,z)$ onto the subgroup $IM_i(D',z')$.

2. Let $T\subset F\times(0,1)$ be a tangle such that $p(T)=D$. Consider an isotopy $H_t$ which pushes the $i$-th component $T_i$ along itself so that the points of $T_i$ make a full turn, and does not move the other components. Then the corresponding loop $\gamma\in IM_i(D,z)$ coincides with $\kappa_z$.

3.  Let $T\subset F\times(0,1)$ be a tangle such that $p(T)=D$ and let $y\in T$ be the  point such that $p(y)=z$. Consider an isotopy $H_t$, $t\in[0,1]$, such that $H_1(T)=T$ and $H_1(y)=y$. Let $\gamma(t)=p(H_t(y))$ and $\gamma^{\le t}$ be the part of $\gamma$ connecting $z$ with $z(t)=p(H_t(y))$. Let $\kappa(t)\in\pi(F,z(t))$ be the homotopy type of $p(H_t(T_i))$. Then $\tilde\kappa(t)=\gamma^{\le t}\kappa(t)(\gamma^{\le t})^{-1}$, $t\in[0,1]$ is a homotopy between $\kappa_z$ and $\gamma\kappa_z\gamma^{-1}$. Hence, $\gamma\kappa_z\gamma^{-1}=\kappa_z$ and $\gamma\in C(\kappa_z)$. Thus, $IM_i(D,z)\subset C(\kappa_z)$.
\end{proof}

\begin{remark}
If $\kappa_z$ is not contractible then $C(\kappa_z)=\left<\alpha\right>$ is a cyclic group generated by a prime homotopy class $\alpha$ such that $\kappa_z=\alpha^k$ for some $k\in\Z$.

If $\kappa_z$ is contractible then $C(\kappa_z)=\pi_1(F,z)$. But the inner monodromy group can differ from the centralizer as the following example shows (Fig.~\ref{pic:pretzel_knot}). Consider a knot $K$ in a surface of genus $2$. The homotopy type of $K$ is trivial. Let us show that $IM(K,z)=\left<\alpha\right>$ where $\alpha$ is the homotopy of the simple curve close to $K$ which separates the surface into two equal parts.
The curve $\alpha$ belongs to $IM(K,z)$ because one can drag the knot along $\alpha$.
\begin{figure}[h]
\centering\includegraphics[width=0.5\textwidth]{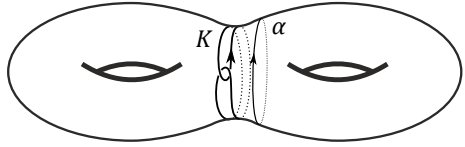}
\caption{A knot in a surface of genus $2$}\label{pic:pretzel_knot}
\end{figure}

Consider the linking invariant~\cite{Nif} (index polynomial) $lk_K(h^0)\in\Z[\pi_1(F,z)]$ corresponding to the homotopy index $h^0$. The knot has two negative crossings with the homotopy index equal to $\alpha$. Then $lk_K(h^0)=-\alpha-\alpha^{-1}$.

Assume $\gamma\in IM(K,z)$. Then then there is a diagram $K'$ obtained from $K$ by pulling a sprout along $\gamma$ which is connected to $K$ by an isotopy which does not move the base point $z$. Then $lk_{K'}(h^0)=lk_K(h^0)$.

On the other hand, the crossings have the homotopy index $\gamma^{-1}\alpha^\pm\gamma$ in $K'$, so $lk_{K'}(h^0)=-\gamma^{-1}\alpha\gamma-\gamma^{-1}\alpha^{-1}\gamma$. Then $\gamma^{-1}\alpha\gamma=\alpha^\pm$. The equality $\gamma^{-1}\alpha\gamma=\alpha^{-1}$ is impossible because $\alpha\ne-\alpha$ in $H_1(F,\Z)$. Hence, $\gamma^{-1}\alpha\gamma=\alpha$, i.e. $\gamma$ and $\alpha$ commute. Then $\gamma=\alpha^k$ for some $k\in\Z$.
\end{remark}

\subsection{Mixed crossings}

Let us define a universal index for mixed crossing. Let $i,j\in\{1,\dots,n\}$ and $i\ne j$.

Fix a diagram $D^0=D^0_1\cup\cdots\cup D^0_n$ of the tangle $T$. Choose con-crossing points $z^0_i$ and $z^0_j$ in the components $D^0_i$ and $D^0_j$. If the component $D^0_i$ ($D^0_j$) is long then we set $z^0_i=s^0_i$  ($z^0_j=s^0_j$) where  $s^0_i$ ($s^0_j$) is the beginning point of the component $D^0_i$ ($D^0_j$). 

Let us define the inner monodromy group for the case of mixed crossing.

\begin{definition}\label{def:inner_monodromy_gropup}
Let $D=D_1\cup\cdots\cup D_n$ be a tangle diagram, $D_i$ and $D_j$ be a closed component of $D$ and $z_i\in D_i$ and $z_j\in D_j$ be non-crossing points. If a component is long then we choose the beginning point of the component for $z_i$ or $z_j$.
Let $T=T_1\cup\cdots\cup T_n\subset F\times(0,1)$ be a tangle such that $p(T)=D$, $p(T_i)=D_i$ and $p(T_j)=D_j$. Let $y_i=p^{-1}(z_i)\cap T_i$ and $y_j=p^{-1}(z_j)\cap T_j$. Define the \emph{inner monodromy group} $IM_{ij}(D,z_i,z_j)$ as the subgroup of $\pi_1(F,z_i)\times\pi_1(F,z_j)$ consisting of the pairs $(\gamma_i,\gamma_j)$ where $\gamma_i(t)=p(H_t(y_i))$ and $\gamma_j(t)=p(H_t(y_j))$, $t\in[0,1]$. Here $H_t\colon F\times[0,1]\to F\times[0,1]$ is an isotopy such that $H_1(T)=T$, $H_1(y_i)=y_i$ and $H_1(y_j)=y_j$.
\end{definition}

\begin{proposition}\label{prop:mixed_inner_monodromy_group}
Let $D=D_1\cup\cdots\cup D_n$ be a tangle diagram, $D_i$ and $D_j$ be components of $D$ and $z_i\in D_i$ and $z_j\in D_j$ be non-crossing points chosen as above. Let $\kappa_{z_i}\in\pi_1(F,z_i)$ be the homotopy class of $D_i$ if the component is closed, and $1$ otherwise. The element $\kappa_{z_j}\in\pi_1(F,z_j)$ is defined analogously.

\begin{enumerate}
  \item Let $D'\in\mathfrak T$ be another diagram of $T$ and $z'_i\in D'_i$, $z'_j\in D'_j$ be a non-crossing point in the $i$-th component. Then $IM_{ij}(D',z'_i,z'_j)\simeq IM_{ij}(D,z_i,z_j)$.
  \item $(\kappa_{z_i},1), (1,\kappa_{z_j}) \in IM_{ij}(D,z_i,z_j)$.
  \item $p_1(IM_{ij}(D,z_i,z_j))=IM_i(D,z_i)$ and $p_2(IM_{ij}(D,z_i,z_j))=IM_j(D,z_j)$ where $p_1, p_2$ are the natural projections from $\pi_1(F,z_i)\times\pi_1(F,z_j)$ to the groups $\pi_1(F,z_i)$ and $\pi_1(F,z_j)$
\end{enumerate}
\end{proposition}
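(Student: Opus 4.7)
The plan is to imitate the three arguments of Proposition~\ref{prop:inner_monodromy_group} on the product $\pi_1(F,z_i)\times\pi_1(F,z_j)$, exploiting that a single ambient isotopy of $F\times[0,1]$ can track both basepoints simultaneously.

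For part 1, I would pick an isotopy $H_t$ realizing a morphism from $D$ to $D'$ with $H_1(y_i)$ projecting to $z'_i$ and $H_1(y_j)$ projecting to $z'_j$, let $\gamma_i(t)=p(H_t(y_i))$ and $\gamma_j(t)=p(H_t(y_j))$, and show that the componentwise path conjugation
\[
(\alpha,\beta)\longmapsto (\gamma_i^{-1}\alpha\gamma_i,\ \gamma_j^{-1}\beta\gamma_j)
\]
restricts to an isomorphism $IM_{ij}(D,z_i,z_j)\to IM_{ij}(D',z'_i,z'_j)$. Given a witnessing isotopy $G_t$ for an element of the source, the conjugated family (the concatenation $H_t$, then $G_s$ at time $1$, then $H_t^{-1}$, with appropriate reparametrization) witnesses the image, and the same procedure with $H_t^{-1}$ gives the inverse direction.

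For part 2, I would realize $(\kappa_{z_i},1)$ explicitly: since $T_i$ and $T_j$ are disjoint compact submanifolds of $F\times(0,1)$, I can pick a tubular neighborhood $U$ of $T_i$ disjoint from $T_j$ and let $H_t$ slide $T_i$ once along itself inside $U$, extended by the identity outside. Then $H_1(T)=T$, $y_j$ never moves, and $p(H_t(y_i))$ is a loop at $z_i$ in the class $\kappa_{z_i}$. The pair $(1,\kappa_{z_j})$ is obtained symmetrically.

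For part 3, the containment $p_1(IM_{ij})\subseteq IM_i$ is immediate, since any isotopy witnessing $(\gamma_i,\gamma_j)\in IM_{ij}$ in particular witnesses $\gamma_i\in IM_i$. For the reverse inclusion, I start with an isotopy $H_t$ witnessing $\gamma_i\in IM_i(D,z_i)$; since $H_1(T)=T$ and components of $T$ are pairwise distinguishable, $H_1(T_j)=T_j$, so $H_1(y_j)\in T_j$ though possibly $H_1(y_j)\neq y_j$. I would then concatenate $H_t$ with a correcting isotopy $K_t$ supported in a tubular neighborhood of $T_j$ chosen disjoint from $y_i$, sliding $T_j$ along itself to carry $H_1(y_j)$ back to $y_j$; the concatenation yields $(\gamma_i,\gamma_j')\in IM_{ij}$ for some $\gamma_j'$, so $\gamma_i\in p_1(IM_{ij})$. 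The main obstacle is producing this $K_t$ preserving $T$ setwise, fixing $y_i$, and moving $H_1(y_j)$ back to $y_j$: the construction proceeds as in part 2 using a tubular neighborhood of $T_j$ avoiding the single point $y_i\in T_i\setminus T_j$, which exists by compactness and disjointness of $T_i$ and $T_j$ inside the open thickening. The $p_2$ statement is proved symmetrically.
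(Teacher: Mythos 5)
Your proof is correct and matches the paper's intent: the paper gives no details here, saying only that the proposition ``is proved analogously to Proposition~\ref{prop:inner_monodromy_group}'', and your three arguments are exactly that analogy carried out (path conjugation for part 1, a slide of $T_i$ along itself in a tubular neighbourhood missing the other components for part 2). Your correcting-isotopy argument for part 3 --- which has no literal counterpart in Proposition~\ref{prop:inner_monodromy_group} and genuinely needs to be supplied --- is sound, with the harmless remark that when $D_j$ is long the basepoint $y_j$ lies on the fixed boundary, so no correction is needed there.
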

The proposition is proved analogously to Proposition~\ref{prop:inner_monodromy_group}.

\begin{remark}\label{rem:general_inner_monodromy_group}
Let $D=D_1\cup\cdots\cup D_n$ be a diagram of a tangle $T\subset F\times(0,1)$ and $z_1,\dots,z_n$ be non-crossing points in the components $D_1,\dots, D_n$. As usual, for long components suppose that the chosen points coincide with the beginning points. Then one can define the \emph{general inner monodromy group} $IM(D,z_1,\dots,z_n)$ as the subgroup in $\prod_{i=1}^n \pi_1(F,z_i)$ which consists of tuples $(\gamma_i)_{i=1}^n$ of the homotopy classes of the paths which the chosen points $z_i$ (more accurately, their liftings $y_i$ to $T$) pass during some isotopy $H_t$, $t\in[0,1]$, such that $H_1(T)=T$ and $H_1(y_i)=y_i$, $i=1,\dots,n$. The general inner monodromy group has the following properties:

\begin{enumerate}
  \item For another diagram $D'$ of $T$ and another choice of points $z'_1,\dots, z'_n$ the groups $IM(D',z'_1,\dots,z'_n)$ and $IM(D,z_1,\dots,z_n)$ are isomorphic.
  \item $(1,\dots,1,\kappa_{z_i},1,\dots,1)\in IM(D,z_1,\dots,z_n)$ where $\kappa_{z_i}=[D_i]\in\pi_1(F,z_i)$.
  \item $IM(D,z_1,\dots,z_n)\subset \prod_{i=1}^n C(\kappa_{z_i})$ (for long components we set $C(\kappa_{z_i})=\{1\}$).
  \item $p_i(IM(D,z_1,\dots,z_n))=IM_i(D,z_i)$ and $p_{ij}(IM(D,z_1,\dots,z_n))=IM_{ij}(D,z_i,z_j)$ where $p_i, p_{ij}$ are the natural projections from $\prod_{i=1}^n \pi_1(F,z_i)$ to $\pi_1(F,z_i)$ and $\pi_1(F,z_i)\times\pi_1(F,z_j)$.
\end{enumerate}
\end{remark}

Now we can define the homotopy index $h$ for the mixed crossings of the component type $(i,j)$.

\begin{definition}\label{def:mixed_homotopy_index}
Let $\pi_1(F,z^0_i,z^0_j)$ be the set of homotopy classes of continuous paths in $F$ from $z^0_i$ to $z^0_j$. Let $\hat\pi_{D^0}(F,z^0_i,z^0_j)$ be the quotient of $\pi_1(F,z^0_i,z^0_j)$ by the action $(\gamma_1,\gamma_2)\cdot \beta=(\gamma_1\beta\gamma_2^{-1})$ of the subgroup $IM_{ij}(D,z_i,z_j)$.

Let $D=D_1\cup\cdots\cup D_n$ be a diagram of the tangle $T$ and $z_i\in D_i$ and $z_j\in D_j$ be non-crossing points which coincide with the beginning point of a component if it is long. Let $H_t$ be an isotopy from $D$ to $D^0$ which moves $z_i$ to $z^0_i$ and $z_j$ to $z^0_j$, and $\bar\gamma_{D,i}$ and $\bar\gamma_{D,j}$ be the corresponding paths.

For any crossing $v$ in $D$ of the component type $(i,j)$, we define its \emph{homotopy index} $h_{ij}(v)$ as the element
\begin{equation}\label{eq:mixed_homotopy_index}
h_{ij}(v)=\widehat{\bar\gamma_{D,i}^{-1}h_{D,z_i,z_j}(v)\bar\gamma_{D,j}}\in\hat\pi_{D^0}(F,z^0_i,z^0_j)
\end{equation}
where $h_{D,z_i,z_j}(v)$ is defined by the formula~\eqref{eq:mixed_homotopy_type}.
\end{definition}

\begin{proposition}\label{prop:universal_mixed_index}
The map $h_{ij}$ is the universal index for mixed crossings of the component type $(i,j)$ on the diagram category $\mathfrak T$.
\end{proposition}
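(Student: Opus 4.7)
The plan is to follow the strategy of Proposition~\ref{prop:universal_closed_index}, adapted to the presence of two basepoints. First I would check that $h_{ij}$ is a well-defined index on $\mathfrak T$. Well-definedness means independence of the choice of the isotopy $H_t$ connecting $D$ to $D^0$: if $H_t$ and $H'_t$ are two such isotopies, then the composition $(H'_t)^{-1}\circ H_t$ is (after a suitable modification as in Lemma~\ref{lem:basing_morphism}) a self-isotopy of $D^0$, and the pair of loops traced by the lifts of $z^0_i$ and $z^0_j$ is an element $(\gamma_i,\gamma_j)\in IM_{ij}(D^0,z^0_i,z^0_j)$. The corresponding change in the representative $\bar\gamma_{D,i}^{-1}h_{D,z_i,z_j}(v)\bar\gamma_{D,j}$ is precisely left-multiplication by $\gamma_i$ and right-multiplication by $\gamma_j^{-1}$, which is exactly the action factored out in $\hat\pi_{D^0}(F,z^0_i,z^0_j)$. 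Properties (I0) and (I2) then follow from the corresponding invariance of $h_{D,z_i,z_j}$ established in Proposition~\ref{prop:mixed_crossings}.

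Next, for universality, let $\iota$ be any index on $\mathfrak T$ with coefficients in a set $I$. I would first work in the based subcategory $\mathfrak T^0_{z^0_i,z^0_j}$ consisting of diagrams containing both basepoints (and connected to $D^0$ by morphisms that fix them), and set
\[
\psi(\hat\alpha)=\iota(v)
\]
for any mixed crossing $v$ of type $(i,j)$ in a diagram $D\in \mathfrak T^0_{z^0_i,z^0_j}$ with $h_{D,z^0_i,z^0_j}(v)=\alpha$. Such a $v$ can always be produced: tie a small mixed crossing close to the basepoints whose connecting arcs to $z^0_i$ and $z^0_j$ realize any prescribed class in $\pi_1(F,z^0_i,z^0_j)$, by the same construction used in Figures~\ref{pic:homotopy_type_crossing} and~\ref{pic:based_homotopy_index}.

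The main obstacle is well-definedness of $\psi$. Suppose $v\in\V(D)$ and $v'\in\V(D')$ are two crossings of type $(i,j)$ with $h_{D,z^0_i,z^0_j}(v)=h_{D',z^0_i,z^0_j}(v')=\alpha$. Pick an isotopy realizing a morphism $D\to D'$, and, arguing as in Proposition~\ref{prop:universal_closed_index}, arrange it to fix both basepoints by sinking the components and working in an abyssal layer $F\times[0,\tfrac13]$ that contains the marked arcs near $z^0_i$ and $z^0_j$. This morphism transports $v$ to a crossing $w\in\V(D')$ with $\iota(v)=\iota(w)$ by (I0), and by construction $h_{D',z^0_i,z^0_j}(w)=\alpha=h_{D',z^0_i,z^0_j}(v')$. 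Proposition~\ref{prop:mixed_crossings} then gives $w\sim v'$ in $D'$, so $\iota(w)=\iota(v')$ by Proposition~\ref{prop:index_and_tribes}, proving $\iota(v)=\iota(v')$.

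Finally, I would extend $\psi$ to all of $\mathfrak T$ using a two-basepoint analogue of Lemma~\ref{lem:basing_morphism}: given a diagram $D$ and any morphism from $D^0$ to $D$, sprouts can be grown simultaneously from both basepoints and pulled along the projected isotopy tracks, producing a diagram in $\mathfrak T^0_{z^0_i,z^0_j}$ to which Proposition~\ref{prop:mixed_crossings} applies. Since the sprout tracks are determined only up to the action of $IM_{ij}(D^0,z^0_i,z^0_j)$, passing to the quotient $\hat\pi_{D^0}(F,z^0_i,z^0_j)$ exactly eliminates the ambiguity, and the factorization $\iota=\psi\circ h_{ij}$ together with uniqueness of $\psi$ follows directly from the construction.
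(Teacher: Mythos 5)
Your proposal is correct and follows exactly the route the paper intends: the paper's own proof of this proposition is the single remark that it is ``proved analogously to Proposition~\ref{prop:universal_closed_index},'' and your argument is precisely that analogy carried out — well-definedness of $h_{ij}$ via the quotient by $IM_{ij}(D^0,z^0_i,z^0_j)$, universality first on the two-basepoint based subcategory using a test crossing and the abyssal-layer isotopy, then extension to $\mathfrak T$ by a two-basepoint version of Lemma~\ref{lem:basing_morphism}. No gaps; this is essentially the proof the author omitted.
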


The proposition is proved analogously to Proposition~\ref{prop:universal_closed_index}.

\subsection{Universal index}\label{subsect:universal_index}

Collecting the results of Corollary~\ref{cor:phratries_surface_tangle} and Propositions~\ref{prop:universal_closed_index} and~\ref{prop:universal_mixed_index}, we come to the following theorem.

\begin{theorem}\label{thm:universal_index_tangles}
Let $T=T_1\cup\cdots\cup T_n$ be a tangle in the surface $F$ and $D^0=D^0_1\cup\cdots\cup D^0_n$ be its diagram.
Let $I^u=\bigsqcup_{i,j=1}^n I^u_{ij}$ where
\[
I^u_{ij}=\left\{\begin{array}{cl}
\Z_2\times\pi_1(F,s_i),& i=j\mbox{ and } \partial D_i\ne\emptyset,\\
\hat\pi_{D^0}(F,z^0_i),& i=j\mbox{ and } \partial D_i=\emptyset,\\
\hat\pi_{D^0}(F,z^0_i,z^0_j),& i\ne j.
\end{array}
\right.
\]
For a diagram $D$ of the tangle $T$, define a map $\iota^u\colon\V(D)\to I^u$ by the formula
\[
\iota^u(v)=\left\{\begin{array}{cl}
(o(v),h(v)),& \tau(v)=(i,i)\mbox{ and } \partial D_i\ne\emptyset,\\
h_i(v),& \tau(v)=(i,i)\mbox{ and } \partial D_i=\emptyset,\\
h_{ij}(v),& \tau(v)=(i,j),\ i\ne j.
\end{array}
\right.
\]
Then $\iota^u$ is the universal index on the diagram category of the tangle $T$.
\end{theorem}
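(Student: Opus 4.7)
The plan is to assemble the theorem from the three universality results already proved component-type by component-type, using the fact that component type stratifies all crossings of all diagrams coherently.

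First I would verify that $\iota^u$ is an index. The decomposition $I^u = \bigsqcup_{i,j} I^u_{ij}$ is compatible with $\iota^u$ in the sense that $\iota^u(v) \in I^u_{\tau(v)}$. Under any Reidemeister move $f\colon D \to D'$, surviving crossings preserve their component type, so the (I0) axiom for $\iota^u$ splits into three statements, one per component-type class, each of which is (I0) for $o\times h$ (long self-crossings), $h_i$ (closed self-crossings), or $h_{ij}$ (mixed crossings) — and all three are already known from Corollary~\ref{cor:long_universal_index}, Proposition~\ref{prop:universal_closed_index}, and Proposition~\ref{prop:universal_mixed_index}. The two crossings that disappear in a decreasing R2 move share component type (visible from Fig.~\ref{pic:mixed_crossing_T2}, or from Proposition~\ref{prop:tribe_type} together with property (T2)), so axiom (I2) also reduces to the three known cases.

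Next, given any index $\iota\colon \V(D)\to I$ on diagrams of $T$, I would construct $\psi\colon I^u\to I$ piece by piece. For each pair $(i,j)$, the restriction of $\iota$ to crossings of component type $(i,j)$ is itself an index on this subclass (since component type is preserved by Reidemeister moves and equal for the two crossings of a decreasing R2 move). Applying the appropriate universality result among Corollary~\ref{cor:long_universal_index}, Proposition~\ref{prop:universal_closed_index}, Proposition~\ref{prop:universal_mixed_index} yields a unique map $\psi_{ij}\colon I^u_{ij} \to I$ with $\iota(v) = \psi_{ij}(\iota^u(v))$ for every crossing $v$ of type $(i,j)$. Define $\psi = \bigsqcup_{i,j}\psi_{ij}$; then $\iota = \psi\circ\iota^u$ globally. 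Uniqueness of $\psi$ follows because any $\psi'$ with $\iota = \psi'\circ\iota^u$ restricts to a $\psi'_{ij}$ on each $I^u_{ij}$ satisfying the same equation, hence equals $\psi_{ij}$ by the uniqueness statements already proved.

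Essentially no new geometry enters at this stage; the theorem is a bookkeeping corollary of the three previous propositions. The only subtlety worth double-checking is that the uniqueness of each $\psi_{ij}$ requires every element of $I^u_{ij}$ to be realized as $\iota^u(v)$ for some crossing $v$ in some diagram of $T$. For the long case this is the construction in Corollary~\ref{cor:long_universal_index} where a second Reidemeister move creates a crossing with any prescribed order type and homotopy class $\alpha\in\pi_1(F,s_i)$; for the closed and mixed cases the analogous constructions are carried out inside the proofs of Propositions~\ref{prop:universal_closed_index} and~\ref{prop:universal_mixed_index} (tying a small crossing in the abyssal layer of $F\times(0,1)$). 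So realizability across all of $I^u$ is already guaranteed, and the assembly goes through without further work.
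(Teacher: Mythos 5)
Your proposal is correct and follows the same route as the paper, which presents the theorem precisely as an assembly of the component-type-wise universality results (Corollary~\ref{cor:long_universal_index} and Propositions~\ref{prop:universal_closed_index}, \ref{prop:universal_mixed_index}) with no further argument given. The bookkeeping you supply — that component type stratifies crossings compatibly with (I0) and (I2), and that uniqueness of $\psi$ rests on every element of each $I^u_{ij}$ being realized by some crossing — is exactly the content the paper leaves implicit.
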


\begin{corollary}\label{cor:classical_trivial_indices}
Let $K$ be a classical knot. Then there are no nontrivial indices on crossings of diagrams of $K$.
\end{corollary}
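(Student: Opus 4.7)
The plan is to reduce the statement directly to material already established. A classical knot $K$ lives in the sphere or the plane, so $F = S^2$ or $F = \R^2$; in either case $\pi_1(F,z) = 1$. Since $K$ is a single closed component, the component type is the constant $(1,1)$, there is no order type to consider, and we are in the ``$i=j$, $\partial D_i = \emptyset$'' branch of Theorem~\ref{thm:universal_index_tangles}.

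First I would invoke Theorem~\ref{thm:universal_index_tangles} to identify the universal coefficient set as $I^u = \hat\pi_{D^0}(F,z^0)$, the quotient of $\pi_1(F,z^0)$ by the adjoint action of the inner monodromy group $IM_1(D^0,z^0)$. Because $\pi_1(F,z^0) = 1$, this quotient is a one-point set, so the universal index $\iota^u$ takes the same value at every crossing of every diagram of $K$. Given any index $\iota$ on diagrams of $K$ with values in an arbitrary set $I$, the defining property of the universal index (Definition~\ref{def:universal_index}) yields a map $\psi\colon I^u \to I$ with $\iota = \psi \circ \iota^u$; since $\iota^u$ is constant, so is $\iota$, which is precisely the assertion that no nontrivial index exists.

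A perhaps cleaner alternative, avoiding the machinery of the inner monodromy group, would be to cite Corollary~\ref{cor:classical_trivial_tribes} (all crossings of any diagram of $K$ belong to the same tribe) and then apply Proposition~\ref{prop:index_and_tribes}: crossings in one tribe take equal values under every index, hence any index on $K$ is constant. I expect no obstacle here; the corollary is essentially a bookkeeping remark, and the only point worth stating carefully is that triviality of $\pi_1(F,z)$ collapses both the homotopy type and the inner monodromy group simultaneously, so the quotient $\hat\pi_{D^0}(F,z^0)$ is genuinely a single point rather than merely ``trivially acted on.''
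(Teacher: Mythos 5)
Your primary argument is exactly the paper's proof: $\pi_1(F,z)=1$ makes the homotopy index trivial, the component type is trivial for a knot and there is no order type, so the universal index of Theorem~\ref{thm:universal_index_tangles} takes values in a one-point set and every index, factoring through it by Definition~\ref{def:universal_index}, is constant. The alternative route you sketch via Corollary~\ref{cor:classical_trivial_tribes} and Proposition~\ref{prop:index_and_tribes} is also sound, with only the small remark that property (I0) is needed to promote constancy on each single diagram to constancy across all diagrams of $K$.
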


\begin{proof}
For a classical knots we have $F=\R^2$ or $S^2$. In both cases $\pi_1(F,z)=1$, hence, the homotopy index $h$ is trivial. For a knot, the component type $\tau$ is trivial too, and there is no order type. Thus, the universal index is constant, and all other indices are constant too.
\end{proof}

For classical links and long knots we have analogous statements.

\begin{corollary}\label{cor:long_classical_knots}
The only nontrivial index for long classical knots is the order type $o$.
\end{corollary}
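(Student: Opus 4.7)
The plan is to specialize Theorem~\ref{thm:universal_index_tangles} to the long classical case and observe that the universal coefficient set collapses to $\Z_2$, carried precisely by the order type $o$.

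First, a long classical knot is a tangle $T$ with $n=1$ component $K_1$ in the surface $F=\R^2$ (equivalently the $2$-disk), with $\partial K_1\neq\emptyset$. Thus for any diagram $D$ the component type $\tau(v)=(1,1)$ is the same for every crossing and carries no information. Since the only component is long, the relevant entry of $I^u=\bigsqcup I^u_{ij}$ is
\[
I^u_{11}=\Z_2\times\pi_1(F,s_1).
\]

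Second, because $F=\R^2$ (or $S^2$, should one prefer the one-point compactification viewpoint), we have $\pi_1(F,s_1)=1$. Hence $I^u_{11}\cong\Z_2$, and the universal index $\iota^u$ reduces to $\iota^u(v)=(o(v),1)$, i.e.\ to the order type $o$ up to the canonical identification with $\Z_2$.

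Third, given any index $\iota$ on diagrams of $T$ with coefficients in a set $I$, the universal property of $\iota^u$ (Definition~\ref{def:universal_index}) yields a unique map $\psi\colon\Z_2\to I$ with $\iota=\psi\circ o$. So every index is of the form $\iota=\psi\circ o$ for some $\psi$. If $\psi$ is constant then $\iota$ is constant (trivial); otherwise $\psi$ is injective on the two-element set $\Z_2$ and $\iota$ carries exactly the same information as $o$. There is no obstacle of substance: once Theorem~\ref{thm:universal_index_tangles} is in hand, the statement reduces to the triviality of $\pi_1(\R^2)$. The only thing to be careful about is to record that, unlike in the closed-component corollary, the order type survives because the component is long, and hence the $\Z_2$ factor of $I^u_{11}$ is genuinely present.
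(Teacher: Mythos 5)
Your proof is correct and follows exactly the route the paper intends: the corollary is stated as an ``analogous statement'' to Corollary~\ref{cor:classical_trivial_indices}, namely a direct specialization of Theorem~\ref{thm:universal_index_tangles} in which $\pi_1(F,s_1)=1$ kills the homotopy factor, the single component kills $\tau$, and only the $\Z_2$ order-type factor of $I^u_{11}$ survives. The only cosmetic remark is that for a long knot the ambient surface must have nonempty boundary (a disk rather than $\R^2$), but since $\pi_1$ is trivial either way this does not affect the argument.
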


\begin{corollary}\label{cor:classical_links}
The component type $\tau$  is the universal index of classical links.
\end{corollary}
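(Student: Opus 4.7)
The plan is to derive the corollary directly from Theorem~\ref{thm:universal_index_tangles} by computing what the sets $I^u_{ij}$ reduce to in the classical setting. Since a link has only closed components, for a classical link $L=L_1\cup\cdots\cup L_n$ in $F=\R^2$ (or $S^2$) the universal index of Theorem~\ref{thm:universal_index_tangles} takes values in $I^u=\bigsqcup_{i,j=1}^n I^u_{ij}$, where $I^u_{ii}=\hat\pi_{D^0}(F,z^0_i)$ and $I^u_{ij}=\hat\pi_{D^0}(F,z^0_i,z^0_j)$ for $i\ne j$, with no long-component contribution to worry about.

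The key observation is that $\pi_1(F,z)=1$ for every base point $z$ whenever $F=\R^2$ or $S^2$. Consequently, $\pi_1(F,z^0_i)$ is trivial and the quotient $\hat\pi_{D^0}(F,z^0_i)$ by the adjoint action of the inner monodromy group is a single point. Likewise, the set $\pi_1(F,z^0_i,z^0_j)$ of homotopy classes of paths between two points in a simply connected surface is a singleton, and any further quotient by an action of $IM_{ij}$ leaves a singleton. Hence each $I^u_{ij}$ has exactly one element, so $I^u$ is a set with $n^2$ elements, one per ordered component pair $(i,j)$, and $\iota^u(v)$ depends on $v$ only through its component type $\tau(v)$.

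It remains to verify that $\tau$ itself qualifies as an index, so it can play the role of $\iota^u$ after identifying $I^u$ with $\{1,\dots,n\}^2$. Properties (I0) and (I2) are immediate from the definition: a Reidemeister move preserves which component each local branch belongs to, and the two crossings annihilated by a decreasing second Reidemeister move are intersections of the same two strands, hence carry the same component type. Together with the identification $I^u\cong\{1,\dots,n\}^2$ via $\iota^u\leftrightarrow\tau$, this shows that for any index $\iota$ on the diagrams of $L$ with values in a set $I$, the unique factorization $\iota=\psi\circ\iota^u$ provided by the universal property of $\iota^u$ is exactly a factorization $\iota=\psi\circ\tau$. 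Thus $\tau$ is universal, which is the statement of the corollary. There is no genuine obstacle here; the only thing to keep honest is the trivial but necessary check that the inner monodromy quotients collapse to singletons (which they do for the silly reason that the ambient fundamental groupoid is trivial).
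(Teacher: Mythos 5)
Your proposal is correct and follows essentially the same route the paper takes: the paper derives this corollary from Theorem~\ref{thm:universal_index_tangles} by the same observation used in Corollary~\ref{cor:classical_trivial_indices}, namely that $\pi_1(F,z)=1$ for $F=\R^2$ or $S^2$ collapses all the homotopy-index coefficient sets to singletons, leaving only the component type. Your explicit check that each $I^u_{ij}$ is a one-point set and that $\tau$ satisfies (I0) and (I2) is exactly the content the paper leaves implicit.
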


\begin{example}\label{exa:annulus}
Let $F=S^1\times [0,1]$ be an annulus. Its fundamental group is $\pi_1(F)=H_1(F)=\Z$. Since the group is commutative, the adjoint action on it is trivial.

Let $K$ be a knot in $F$ and $D$ be a diagram of $K$. For a crossing $v\in\V(D)$, its homotopy index coincides with the homology class of the signed half $D^+_v$: $h(v)=[D^+_v]\in H_1(F)=\Z$.
\end{example}

\begin{corollary}
The map $h$ with coefficients in $\Z$ is the universal index for knots in the annulus.
\end{corollary}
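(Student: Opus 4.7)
The plan is to derive the corollary as a direct specialization of Theorem~\ref{thm:universal_index_tangles} to the case of a single closed component in a surface with abelian fundamental group. A knot $K$ has one closed component ($n=1$, $\partial D_1=\emptyset$), so the component type and order type contribute nothing, and the relevant piece of the universal coefficient set collapses to $I^u = \hat\pi_{D^0}(F,z^0_1)$, with universal index $\iota^u = h_1$ as in Definition~\ref{def:closed_homotopy_index}.

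The first step is to identify $\hat\pi_{D^0}(F,z^0_1)$ with $\Z$. For the annulus one has $\pi_1(F,z^0_1)=H_1(F,\Z)=\Z$. Since this group is abelian, every inner automorphism is trivial, so the adjoint action of the inner monodromy group $IM_1(D^0,z^0_1)$ on $\pi_1(F,z^0_1)$ is trivial regardless of which subgroup $IM_1(D^0,z^0_1)$ happens to be. Hence $\hat\pi_{D^0}(F,z^0_1)=\pi_1(F,z^0_1)=\Z$.

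The second step is to identify the map $h_1$ with the homology class $v\mapsto [D^+_v]$. By formula~\eqref{eq:closed_homotopy_index}, $h_1(v)=\widehat{\bar\gamma_D^{-1}[\hat D^+_{v,z_1}]\bar\gamma_D}$; the conjugation by $\bar\gamma_D$ is trivial in the abelian group $\pi_1(F)$, and the passage from $\pi_1(F,z^0_1)$ to $H_1(F,\Z)=\Z$ via Hurewicz sends the class of the based loop $\hat D^+_{v,z_1}=\gamma^u_{v,z_1}D^+_v(\gamma^u_{v,z_1})^{-1}$ to $[D^+_v]$, since the path $\gamma^u_{v,z_1}$ and its inverse cancel in homology. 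This is precisely the content of Example~\ref{exa:annulus}, so $h_1 = h$ as a map into $\Z$.

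No real obstacle is expected: the entire argument is a one-line specialization of Theorem~\ref{thm:universal_index_tangles} together with the observation that commutativity of $\pi_1(F)$ kills both the adjoint action appearing in $\bar\pi_{D}(F,z)$ and the further adjoint action by $IM_1(D^0,z^0_1)$. The only point requiring care is verifying that Example~\ref{exa:annulus} correctly identifies $h$ with $[D^+_v]$, but this is routine from the definitions.
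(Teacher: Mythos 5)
Your proposal is correct and follows essentially the same route as the paper, which states this corollary as an immediate specialization of Theorem~\ref{thm:universal_index_tangles} using the observations of Example~\ref{exa:annulus}: commutativity of $\pi_1$ of the annulus makes both adjoint actions (by $\kappa_z$ and by the inner monodromy group) trivial, so $\hat\pi_{D^0}(F,z^0_1)=\Z$ and $h_1(v)=[D^+_v]$. Nothing is missing.
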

Note that $sgn\cdot h$ is an ordered parity in the sense of~\cite{Nwp}.

Let $K$ be a long knot in the annulus $F$. Then $\partial K\subset \partial F=S^1\times\{0,1\}$. Let $D$ be a diagram of $K$ and $v$ be a crossing of $D$. The vertex $v$ splits the diagram $D$ into two halves, one of which is closed. We denote the close half by $D^c_v$.

The homotopy index of the crossing $v$ is the homology class of its closed half: $h(v)=[D^c_v]\in H_1(F)=\Z$.

\begin{corollary}
The map $h\times o\colon \V(D)\to \Z\times\Z_2$ where $o$ is the order type, is the universal index for long knots in the annulus.
\end{corollary}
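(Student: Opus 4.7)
The plan is to deduce this corollary from Theorem~\ref{thm:universal_index_tangles} by specializing to the case of one long component in the annulus $F=S^1\times[0,1]$, and then identifying the abstract coefficient set $\Z_2\times\pi_1(F,s)$ with $\Z_2\times\Z$ via the concrete description of $h(v)$ as $[D^c_v]\in H_1(F,\Z)$.

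First I would apply Theorem~\ref{thm:universal_index_tangles} to the tangle $T=K$, which has a single long component ($n=1$ and $\partial D_1\ne\emptyset$). The theorem then gives the universal coefficient set $I^u=I^u_{11}=\Z_2\times\pi_1(F,s)$ and the universal index $\iota^u(v)=(o(v),h_D(v))$, where $h_D(v)=[\gamma\alpha\gamma^{-1}]\in\pi_1(F,s)$ is defined by formula~\eqref{eq:long_homotopy_type}: $\gamma$ is the arc of $D$ from $s$ to the first appearance of $v$, and $\alpha$ is the arc between the two appearances of $v$.

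Next I would identify $\pi_1(F,s)$ with $\Z$. Since $F$ is the annulus, $\pi_1(F,s)=H_1(F,\Z)=\Z$, and in particular the adjoint action is trivial. Therefore $[\gamma\alpha\gamma^{-1}]=[\alpha]\in\Z$. The key observation is that the arc $\alpha$ connects the two preimages of $v$ and, together with the crossing point (i.e.\ after the oriented smoothing at $v$), forms exactly the closed half $D^c_v$ of the diagram—the half not containing the starting point $s$. Hence the homology class $[\alpha]$ equals $[D^c_v]\in H_1(F,\Z)=\Z$, which is the index $h(v)$ defined in the statement.

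This identification realises the universal index of Theorem~\ref{thm:universal_index_tangles} as the map $h\times o\colon\V(D)\to\Z\times\Z_2$ of the corollary. The only step that requires a small argument, and which I expect to be the main (very mild) obstacle, is the identification $[\gamma\alpha\gamma^{-1}]=[D^c_v]$; once this is verified, universality is immediate from Theorem~\ref{thm:universal_index_tangles}, since every pair $(n,\epsilon)\in\Z\times\Z_2$ can be realised by creating a suitable crossing via second Reidemeister moves (as in the construction in Corollary~\ref{cor:long_universal_index}), guaranteeing that the comparison map $\psi\colon\Z\times\Z_2\to I$ to the coefficient set of any other index $\iota$ is uniquely determined by $\iota=\psi\circ(h\times o)$.
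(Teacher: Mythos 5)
Your proposal is correct and follows essentially the same route as the paper: the corollary is a direct specialization of Theorem~\ref{thm:universal_index_tangles} to a single long component, using that $\pi_1(S^1\times[0,1],s)=H_1=\Z$ is abelian so the conjugate $[\gamma\alpha\gamma^{-1}]$ reduces to $[\alpha]=[D^c_v]$, exactly as in the paper's preceding discussion of the annulus. The realization of all values $(n,\epsilon)$ via second Reidemeister moves, as in Corollary~\ref{cor:long_universal_index}, is also the paper's mechanism for universality.
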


\begin{example}\label{exa:torus}
Let $F=T^2$ be the torus. Then $\pi_1(F)=H_1(F)=\Z^2$. Since the group is commutative then the adjoint action on it is trivial.

Let $K$ be a knot in the (thickened) torus. Then the homotopy index of crossings diagrams is the homology type of their positive half: $h(v)=[D^+_v]\in H_1(T^2)=\Z^2$. Thus, we have a universal $\Z^2$-valued index for knots in the torus.

Note that $sgn\cdot h$ is an oriented parity in the sense of~\cite{Nwp}.
\end{example}

\subsection{Universal index of flat tangles in the surface $F$}

Let us formulate a result analogous to that of Section~\ref{subsect:universal_index}. The flat case is simpler because closed components of a flat tangle are essentially free loops in the surface $F$, and long components are curves in $F$ defined up to homotopy with fixed ends.

Let $T=K_1\cup\cdots\cup K_n$ be a flat tangle in the surface $F$ and $\mathfrak T$ be the category of flat diagrams of $T$.

Fix a diagram $D^0=D^0_1\cup\cdots\cup D^0_n\in\mathfrak T$ and choose points $z^0_i\in D^0_i$, $i=1,\dots,n$. If some component $D^0_i$ is long then we set $z^0_i=s^0_i$ where $s^0_i$ is the beginning point of the component $D^0_i$. Let $\kappa^0_i=[D^0_i]$ be the homotopy class of the $i$-th component in the group $\pi_1(F,z^0_i)$ if $D^0_i$ is closed, and $\kappa^0_i=1\in\pi_1(F,z^0_i)$ is $D^0_i$ is long, $i=1,\dots,n$. Let $C^0_i$ be the subgroup in $\pi_1(F,z^0_i)$ which is equal to the centralizer $C(\kappa^0_i)$ is the component $D^0_i$ is closed, and equal to $\{1\}$ if the component is long.

Let us define sets $\bar I^u_{ij}$, $i,j\in\{1,\dots,n\}$ as follows
\[
I^u_{ij}=\left\{\begin{array}{cl}
\hat{\bar\pi}_{D^0}(F,z^0_i),& i=j,\\
\hat{\bar\pi}_{D^0}(F,z^0_i,z^0_j),& i\ne j.
\end{array}
\right.
\]
where $\hat{\bar\pi}_{D^0}(F,z^0_i)=\pi_1(F,z^0_i)$ for a long component $D^0_i$;
$\hat{\bar\pi}_{D^0}(F,z^0_i)$ is the quotient set of the group $\pi_1(F,z^0_i)$ by the adjoint action of the subgroup $C^0_i$ and the involution $\sigma(x)=\kappa^0_i x^{-1}$, $x\in\pi_1(F,z^0_i)$, for a closed component $D^0_i$; and $\hat{\bar\pi}_{D^0}(F,z^0_i,z^0_j)$ is the quotient set  $C^0_i\backslash\pi_1(F,z_i,z_j)/C^0_j$. Let $\bar I^u=\bigsqcup_{i,j=1}^n \bar I^u_{ij}$.

Given a flat diagram $D=D_1\cup\cdots\cup D_n$ of the flat tangle $T$, choose non-crossing points $z_i\in D_i$, $i=1,\dots,n$  which coincide with the beginning points for the long components of $D$. For each closed component $D_i$, choose a path $\bar\gamma_{D,i}$ such that it connects $z_i$ with $z^0_i$ and $\bar\gamma_{D,i}^{-1}[D_i]\bar\gamma_{D,i}=\kappa^0_i$. Such paths exist because $D$ is homotopic to $D^0$ in $F$.

We define a map $\bar\iota^u\colon \V(D)\to \bar I^u$ as follows: for a crossing $v$ of the diagram $D$ the value $\bar\iota^u(v)$ is the image of the flat homotopy type $\bar h(v)$ (see Definition~\ref{def:flat_types}) under the natural projection of $\pi_1(F,s^0_i)$ or $\bar{\bar\pi}_{D^0}(F,z^0_i)$ to $\hat{\bar\pi}_{D^0}(F,z^0_i)$, or the natural projection of ${\bar\pi}_{D^0}(F,z^0_i,z^0_j)$ to $\hat{\bar\pi}_{D^0}(F,z^0_i,z^0_j)$.

\begin{theorem}\label{thm:universal_flat_index}
The map $\bar\iota$ with the coefficients in $\bar I^u$ is the universal index on the diagrams of the flat tangle $T$.
\end{theorem}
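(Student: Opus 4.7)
The plan is to adapt the strategy of Theorem~\ref{thm:universal_index_tangles} to the flat setting. The essential simplification is that homotopies of immersions in $F$ are more flexible than isotopies in $F\times(0,1)$, which is reflected in the coefficient sets of $\bar I^u$ involving the full centralizer quotients $C^0_i\backslash\pi_1(F,z^0_i,z^0_j)/C^0_j$ rather than the smaller inner-monodromy quotients of Definition~\ref{def:inner_monodromy_group}. The proof splits into verifying that $\bar\iota^u$ is an index and then establishing universality.

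First I would check that $\bar\iota^u$ is well-defined. The value $\bar\iota^u(v)$ appears to depend on the auxiliary paths $\bar\gamma_{D,i}$ (and $\bar\gamma_{D,j}$ in the mixed case), but any two admissible paths differ by a loop $c$ at $z^0_i$ (resp.\ at $z^0_j$), and the conjugation constraint $\bar\gamma_{D,i}^{-1}[D_i]\bar\gamma_{D,i}=\kappa^0_i$ forces $c\in C^0_i$; the adjoint (resp.\ two-sided) quotient by the centralizer built into $\hat{\bar\pi}_{D^0}$ removes this ambiguity. The $\sigma$-identification for closed self-crossings is already absorbed into $\bar h(v)$ by Definition~\ref{def:flat_types}. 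Properties (I0) and (I2) then follow from the invariance of $\bar h(v)$ established in Section~\ref{sect:tribes_flat_crossings}: the flat homotopy type is preserved under Reidemeister moves, and two crossings paired by a decreasing second Reidemeister move belong to one tribe by Theorem~\ref{thm:tribes_flat_tangle} and therefore share their flat homotopy type.

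Next I would prove universality. Given any index $\iota$ on $\mathfrak T$ with coefficients in a set $I$, I define $\psi\colon\bar I^u\to I$ as follows: for each class $\alpha\in\bar I^u_{ij}$, use second Reidemeister moves starting from $D^0$, following the pattern of Fig.~\ref{pic:homotopy_type_crossing}, to create a crossing $v_\alpha$ of the prescribed component type with $\bar\iota^u(v_\alpha)=\alpha$, and set $\psi(\alpha):=\iota(v_\alpha)$. The equality $\iota=\psi\circ\bar\iota^u$ on the ``canonical'' crossings $v_\alpha$ then holds by construction, and uniqueness of $\psi$ follows from the surjectivity of $\bar\iota^u$ onto $\bar I^u$.

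The main obstacle is the well-definedness of $\psi$, i.e.\ showing that $\bar\iota^u(v)=\bar\iota^u(w)$ implies $\iota(v)=\iota(w)$. When the two chosen representatives of the flat homotopy type already agree on the nose in $\pi_1(F,z^0_i)$ (resp.\ $\pi_1(F,z^0_i,z^0_j)$), this is immediate from Theorem~\ref{thm:tribes_flat_tangle} combined with Proposition~\ref{prop:index_and_tribes}. The nontrivial case is when the representatives differ by the adjoint action of an element $c\in C^0_i$ (and/or $c'\in C^0_j$); here I would realize the adjoint action as an actual morphism in $\mathfrak T$. Since $c$ centralizes $\kappa^0_i$, dragging the base point $z^0_i$ along $c$ during a generic homotopy of the immersed component returns the $i$-th component to a curve freely homotopic to its original, and the transverse self-intersections that appear during the homotopy can be converted into localized loops by second Reidemeister moves, exactly as in Step~3 of the proof of Lemma~\ref{lem:main_lemma}. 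The resulting sequence of flat Reidemeister moves gives a morphism $D^0\to D^0$ in $\mathfrak T$ along which the tracked flat homotopy type of $v$ is conjugated by $c$; invariance of $\iota$ under this morphism via (I0) delivers $\iota(v)=\iota(w)$, and a symmetric two-sided version handles the mixed case.
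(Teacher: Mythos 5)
Your proof follows essentially the same route as the paper's (which is itself only a two-sentence sketch deferring to Corollary~\ref{cor:long_universal_index} and Propositions~\ref{prop:universal_closed_index} and~\ref{prop:universal_mixed_index}): both arguments first treat the based subcategory and then pass to general diagrams, and the key point in both is that the flat analogue of the inner monodromy group is the full centralizer $C(\kappa^0_i)$ (respectively $C(\kappa^0_i)\times C(\kappa^0_j)$ in the mixed case), which you correctly justify by realizing any centralizing loop as a drag of the component implemented by flat Reidemeister moves. The only cosmetic remark is that a generic homotopy of an immersed curve in $F$ already decomposes into flat Reidemeister moves, so the conversion of crossing changes into small loops borrowed from Step~3 of Lemma~\ref{lem:main_lemma} is not actually needed in the flat setting.
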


The proof follows the proof of Corollary~\ref{cor:long_universal_index} and Propositions~\ref{prop:universal_closed_index} and~\ref{prop:universal_mixed_index}. First we prove that the flat homotopy type $\bar h$ is the universal index on the diagram category which consists of diagrams $D$ that can be connected with $D^0$ by a homotopy which does not move the base points $z^0_i$. Then we pass to the general category $\mathfrak T$.

Note that the flat analogues of the inner monodromy groups $IM_i(D^0,z^0_i)$ and $IM_{ij}(D^0,z^0_i,z^0_j)$ coincide with the centralizers $C(\kappa^0_i)$
and $C(\kappa^0_i)\times C(\kappa^0_j)$ correspondingly.

\section{Index for virtual and flat knots}\label{sect:virtual_flat_knots}

\subsection{Virtual knots}

Recall that \emph{oriented virtual links} $\mathcal L$ are equivalence classes of pairs $(F,L)$ where $F$ is an oriented closed surface and $L\subset F\times(0,1)$ is an oriented link in thickening of $F$, considered modulo isotopies, diffeomorphisms and stabilization/destabilization operations~\cite{CKS}, see Fig.~\ref{pic:stabilization}.

\begin{figure}[h]
\centering\includegraphics[width=0.5\textwidth]{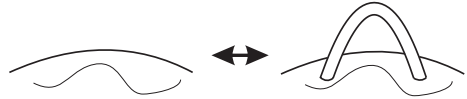}
\caption{Stabilization operation}\label{pic:stabilization}
\end{figure}

Given a virtual link $\mathcal L$, we consider its diagram category $\mathfrak L$ whose objects are pairs $(F,D)$ where $F$ is a surface and $D\subset F$ is a link diagram (a general projection of $L$ for some representative $(F,L)$ of $\mathcal L$). The morphisms in $\mathfrak L$ are compositions of surface isotopies, Reidemeister moves, diffeomorphisms of pairs $(F,D)$ and (de)stabilizations.

G. Kuperberg proved the following theorem~\cite{Kuperberg}.

\begin{theorem}\label{thm:Kuperberg}
Let $(F,L)$ and $(F',L')$ be two representatives of a virtual link $\mathcal L$ and the genus of $F$ and $F'$ be minimal among all representatives of $\mathcal L$. Then there exists an orientation preserving diffeomorphism $\Phi\colon F\times[0,1]\to F'\times[0,1]$ such that $\Phi(L)=L'$.
\end{theorem}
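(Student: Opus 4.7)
The plan is to exploit uniqueness of the minimal-genus representative via the $3$-manifold topology of the link complement. By definition of virtual equivalence, $(F,L)$ and $(F',L')$ are related by a finite zig-zag of isotopies, diffeomorphisms of pairs, Reidemeister moves and (de)stabilizations. The non-stabilization moves already produce diffeomorphisms between the corresponding thickenings, so the argument reduces to showing that any stabilization of a minimal representative can be cancelled, up to a pair-diffeomorphism, by a subsequent destabilization; iterating this trades every stabilization chain for a chain of pair-diffeomorphisms.

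The key technical step would be an \emph{annular destabilization lemma}: a representative $(F,L)$ of $\mathcal L$ is non-minimal if and only if there exists an embedded annulus $A \subset F\times[0,1]$ disjoint from $L$, whose boundary consists of two parallel essential simple closed curves on $F\times\{1/2\}$ bounding a handle of $F$. The ``only if'' direction is immediate from the definition of stabilization, while in the ``if'' direction one uses $A$ as the core of a destabilization, cutting $F\times[0,1]$ along the vertical cylinder spanned by $A$ to produce a thickening of a lower-genus surface still containing $L$. Granting this lemma, one walks along the zig-zag connecting $(F,L)$ and $(F',L')$: each time a stabilization appears, the resulting non-minimal intermediate representative admits a destabilization by the lemma, which can be inserted to cancel it up to a diffeomorphism of pairs. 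The remaining chain consists only of isotopies, Reidemeister moves and pair-diffeomorphisms between two representatives sharing the same thickening, and their composition yields the required orientation-preserving $\Phi$.

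The main obstacle is the ``if'' direction of the annular destabilization lemma, namely producing an annulus $A$ that avoids $L$. A naive meridian disk of the extra handle in $F\times[0,1]$ may intersect $L$ transversely, and pushing it off $L$ requires the incompressibility of the link exterior together with an innermost/outermost arc reduction on the intersections of $A$ with a chosen essential surface, plus Haken-type finiteness to terminate the procedure. Equivalently, one can appeal to the JSJ decomposition of $(F\times[0,1])\setminus N(L)$ and read off the destabilizing annulus from the characteristic $I$-bundle piece of the exterior; this is the route originally taken by Kuperberg, and it is the main place where $3$-manifold machinery enters the proof.
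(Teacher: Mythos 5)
The paper does not actually prove Theorem~\ref{thm:Kuperberg}: it quotes it from Kuperberg~\cite{Kuperberg}, so there is no internal proof to compare against and your sketch must stand on its own. As written, it has a genuine gap.

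The missing ingredient is a confluence (diamond-lemma) argument for destabilizations, and without it your zig-zag cancellation is circular. Suppose the chain from $(F,L)$ to $(F',L')$ contains a stabilization producing $(F_1,L_1)$, followed by isotopies and Reidemeister moves producing $(F_1',L_1')$, which you then destabilize along an annulus $A$ supplied by your lemma. Nothing guarantees that $A$ is isotopic to the co-core annulus of the handle that was just attached; destabilizing along $A$ yields \emph{some} minimal-genus representative, and the claim that it is pair-diffeomorphic to the one you started from is precisely the theorem you are trying to prove. The actual argument supplies exactly this step: any two destabilizing vertical annuli in $(F\times[0,1])\setminus L$ can be isotoped off $L$ and off each other by an innermost curve/arc argument using irreducibility of the complement, so the two destabilizations commute and every maximal destabilization sequence terminates in diffeomorphic representatives. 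That is the heart of the proof and it is absent from your sketch. Two further problems with your ``annular destabilization lemma'': (i) the destabilizing object is a \emph{vertical} properly embedded annulus with one boundary circle in $F\times\{0\}$ and one in $F\times\{1\}$, lying over an essential simple closed curve of $F$; an annulus with both boundary components on the interior level $F\times\{1/2\}$ is not the surface one cuts along. (ii) The direction you call ``immediate from the definition of stabilization'' (non-minimal $\Rightarrow$ a destabilizing annulus exists) is in fact the hard direction --- a general non-minimal representative is connected to a lower-genus one only through an arbitrary zig-zag and need not visibly carry such an annulus --- whereas the direction you single out as the main obstacle is the easy one. The difficulty you do identify (pushing an annulus off $L$) is real, but it is a sub-step of, not a substitute for, the confluence argument.
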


Kuperberg's theorem allows us to introduce the diagram subcategory $\mathfrak L_{min}$ consisting of diagrams $D$ of $\mathcal L$ in surfaces $F$ of minimal genus. The morphisms in $\mathfrak L_{min}$ are isotopies, Reidemeister moves and diffeomorphisms.

Note that we don't have stabilization morphisms in $\mathfrak L_{min}$, and the surfaces $F$ of all objects $(F,D)$ in $\mathfrak L_{min}$ are diffeomorphic.
Unlike diagram categories of links in a fixed surface, the category $\mathfrak L_{min}$ has new morphisms --- diffeomorphisms of thickened surfaces.

Let us describe the universal index in the category $\mathfrak L_{min}$.

\begin{definition}
Let $D=D_1\cup\cdots\cup D_n$ be a link diagram in a surface $F$ of minimal genus. Let $L=L_1\cup\cdots\cup L_n$ be the corresponding link in $F\times(0,1)$. Choose non-crossing points $z_i\in D_i$, $i=1,\dots,n$. Let $y_i=p^{-1}(z_i)\cap L_i$ be the corresponding points in $L$. Let $\widetilde{Sym}(L,y_1,\dots,y_n)$ be the subgroup in the diffeomorphism group $Diff^+(F\times[0,1])$ which consists of diffeomorphisms $\Phi\colon F\times[0,1]\to F\times[0,1]$ such that $\Phi(L_i)=L_i$ and $\Phi(y_i)=y_i$, $i=1,\dots,n$.

Every diffeomorphism $\Phi\in\widetilde{Sym}(L,y_1,\dots,y_n)$ induces the automorphisms $\Phi_*\in Aut(\pi_1(F,z_i))$ and $\Phi_*\in Aut(\pi_1(F,z_i,z_j))$. The group
\[
Sym(D,z_i)=\{\Phi_* \mid \Phi\in\widetilde{Sym}(L,y_1,\dots,y_n)\}\subset Aut(\pi_1(F,z_i))
\]
is the \emph{symmetry group} for the component $D_i$ and the group

\[
Sym(D,z_i,z_j)=\{\Phi_*\mid \Phi\in\widetilde{Sym}(L,y_1,\dots,y_n)\}\subset Aut(\pi_1(F,z_i,z_j))
\]
is the \emph{symmetry group} for the components $D_i$ and $D_j$.
\end{definition}

\begin{proposition}\label{prop:symmetry_group_property}
Let $D=D_1\cup\cdots\cup D_n\in\mathfrak L_{min}$ be a link diagram and $z_i\in D_i$, $i=1,\dots,n$, be non-crossing points. Then
\begin{enumerate}
 \item The groups $Sym(D,z_i)$ and $Sym(D,z_i,z_j)$ are well defined, and for any other diagram $D'$ and a set of non-crossing points $z'_i\in D'_i$, $i=1,\dots,n$, we have $Sym(D,z_i)\simeq Sym(D',z'_i)$ and  $Sym(D,z_i,z_j)\simeq Sym(D',z'_i,z'_j)$ for all $1\le i,j\le n$.
 \item Let $\gamma\in IM_i(D,z_i)$. Then $Ad_\gamma\in Sym(D,z_i)$ where $Ad_\gamma(\alpha)=\gamma\alpha\gamma^{-1}$, $\alpha\in\pi_1(F,z_i)$, is the adjoint action.
 \item Let $(\gamma_i,\gamma_j)\in IM_{ij}(D,z_i,z_j)$. Then $Ad_{\gamma_i,\gamma_j}\in Sym(D,z_i,z_j)$ where $Ad_{\gamma_i,\gamma_j}(\alpha)=\gamma_i\alpha\gamma_j^{-1}$, $\alpha\in\pi_1(F,z_i,z_j)$.
 \item Let $\kappa_{z_i}=[D_i]\in\pi_1(F,z_i)$. Then for any $\phi\in Sym(D,z_i)$ we have $\phi(\kappa_{z_i})=\kappa_{z_i}$.
\end{enumerate}
\end{proposition}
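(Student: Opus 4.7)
The plan is to follow the template of Proposition~\ref{prop:inner_monodromy_group}, exploiting that $\widetilde{Sym}(L,y_1,\dots,y_n)$, the inner monodromy groups, and the class $\kappa_{z_i}$ are all built from, or preserved by, self-isotopies of the thickened surface that fix $L$ together with a collection of marked points. The one extra ingredient compared with Proposition~\ref{prop:inner_monodromy_group} is Kuperberg's Theorem~\ref{thm:Kuperberg}, which supplies a diffeomorphism between any two minimal-genus representatives of the same virtual link.

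For Part 1, I first note that a lifting $L\subset F\times(0,1)$ of a fixed diagram $D$ is unique up to ambient isotopy descending to the identity on $D$, and conjugation by such an isotopy identifies $\widetilde{Sym}$ across different liftings, so $Sym(D,z_i)$ and $Sym(D,z_i,z_j)$ depend only on $(F,D)$ and the base points. For a morphism $f\colon(F,D)\to(F',D')$ in $\mathfrak L_{min}$ (an isotopy, Reidemeister move, or diffeomorphism) I realize $f$ by an ambient isotopy or diffeomorphism $\Phi\colon F\times[0,1]\to F'\times[0,1]$ with $\Phi(L)=L'$; correcting $\Phi(y_i)$ to $y'_i$ by a small slide along $L'_i$ as in Lemma~\ref{lem:basing_morphism}, the conjugation $\Psi\mapsto\Phi\Psi\Phi^{-1}$ defines an isomorphism $\widetilde{Sym}(L,y_1,\dots,y_n)\simeq\widetilde{Sym}(L',y'_1,\dots,y'_n)$, and passing to induced $\pi_1$-automorphisms yields $Sym(D,z_i)\simeq Sym(D',z'_i)$ and $Sym(D,z_i,z_j)\simeq Sym(D',z'_i,z'_j)$.

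For Parts 2 and 3, an element $\gamma\in IM_i(D,z_i)$ is realized by an isotopy $H_t$ with $H_1(L)=L$, $H_1(y_i)=y_i$ and $\gamma(t)=p(H_t(y_i))$, but $H_1$ need not fix the remaining base points $y_j$. Since the permutation of components induced by $H_t$ is continuous in $t$ and starts at the identity, $H_1(L_j)=L_j$ and $H_1(y_j)\in L_j$, so I compose $H_t$ with an isotopy $G_t$ supported in disjoint tubular neighbourhoods of the $L_j$ ($j\neq i$) that slides each $L_j$ along itself to bring $H_1(y_j)$ back to $y_j$. The composite $\Phi=G_1\circ H_1$ lies in $\widetilde{Sym}(L,y_1,\dots,y_n)$; since any class in $\pi_1(F\times(0,1),y_i)\simeq\pi_1(F,z_i)$ can be represented by a loop disjoint from the support of $G_t$, the map $G_{1*}$ acts as the identity, while the standard computation for $H_t$ issuing from the identity gives $(H_1)_*(\alpha)=\gamma\alpha\gamma^{-1}$. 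Hence $\Phi_*=Ad_\gamma\in Sym(D,z_i)$. The mixed case is identical: for $(\gamma_i,\gamma_j)\in IM_{ij}(D,z_i,z_j)$ the realizing isotopy already fixes both $y_i$ and $y_j$, only the other base points need correcting, and the same argument yields $\Phi_*(\alpha)=\gamma_i\alpha\gamma_j^{-1}=Ad_{\gamma_i,\gamma_j}(\alpha)$ on $\pi_1(F,z_i,z_j)$.

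For Part 4, any $\phi\in Sym(D,z_i)$ is $\Phi_*$ for some $\Phi\in\widetilde{Sym}(L,y_1,\dots,y_n)$. In the oriented virtual-link setting $\Phi$ preserves the orientation of each component, so together with $\Phi(y_i)=y_i$ it restricts to an orientation-preserving self-diffeomorphism of the circle $L_i$ fixing a point, hence is isotopic to the identity on $L_i$ rel $y_i$. Consequently $p\circ\Phi(L_i)$ is homotopic to $D_i$ as an oriented based loop at $z_i$, giving $\phi(\kappa_{z_i})=\kappa_{z_i}$. The main obstacle throughout is the base-point bookkeeping in Parts 2 and 3: one must arrange the correcting slides of the other components so that they contribute trivially to the induced $\pi_1$-automorphism, which is handled by taking the supports of these slides in arbitrarily thin tubular neighbourhoods of the $L_j$ disjoint from all relevant base points.
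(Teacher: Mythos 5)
Your proof is correct and follows essentially the same route as the paper: conjugating $\widetilde{Sym}$ by a Kuperberg diffeomorphism for part 1, promoting a realizing isotopy of an inner monodromy element to an element of $\widetilde{Sym}$ for parts 2 and 3, and the naturality computation $\Phi_*[L_i]=[\Phi(L_i)]=[L_i]$ for part 4. You merely fill in details the paper passes over (the correcting slides that fix the remaining base points $y_j$, which the paper dismisses with ``without loss of generality''), and your convention $(H_1)_*=Ad_\gamma$ versus the paper's $Ad_{\gamma^{-1}}$ is immaterial since $Sym(D,z_i)$ is closed under inverses.
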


\begin{proof}
1. Let $D'=D'_1\cup\cdots\cup D'_n$ be a diagram in a surface $F'$, and $L'=L'_1\cup\cdots\cup L'_n\in F'\times(0,1)$ be a corresponding link, i.e. $p(L')=D'$. Let $y'_i=p^{-1}(z'_i)\cap L'$, $i=1,\dots,n$.  By Kuperberg's theorem there exists a diffeomorphism $\Psi\colon F\times[0,1]\to F'\times[0,1]$ such that $\Psi(L_i)=L'_i$. We can suppose that $\Psi(y_i)=y'_i$ for all $i$. Then for any $\Phi\in\widetilde{Sym}(L,y_1,\dots,y_n)$ the composition $\Phi'=\Psi\Phi\Psi^{-1}$ belongs to $\widetilde{Sym}(L',y'_1,\dots,y'_n)$ and vice versa. Then we have a sequence of isomorphisms of groups
\[
\pi_1(F,z_i)\stackrel{p_*}{\leftarrow}\pi_1(F\times[0,1],y_i)\stackrel{\Psi_*}{\rightarrow}\pi_1(F'\times[0,1],y'_i)\stackrel{p_*}{\rightarrow}\pi_1(F',z'_i)
\]
which induces an isomorphism $\Psi_*\colon Aut(\pi_1(F,z_i))\to Aut(\pi_1(F',z'_i))$. The isomorphism $\Psi_*$ identifies $Sym(D,z_i)$ and  $Sym(D',z'_i)$.

Analogously, the groups $Sym(D,z_i,z_j)$ and  $Sym(D',z'_i,z'_j)$ are also isomorphic.

2. Let $\gamma\in IM_i(D,z_i)$. By Definition~\ref{def:inner_monodromy_group} there exist an isotopy $H_t$, $t\in[0,1]$, such that $H_1(L)=L$ and $H_1(y_i)=y_i$. Without loss of generality, we can assume that $H_1(y_j)=y_j$ for all $j$. Then $H_1\in \widetilde{Sym}(L,y_1,\dots,y_n)$ and $(H_1)_*=Ad_{\gamma^{-1}}\in Sym(D,z_i)$. Hence, $Ad^{-1}_{\gamma^{-1}}=Ad_\gamma\in Sym(D,z_i)$.

3. The third statement is proved analogously to the previous one.

4. Let $\Phi\in\widetilde{Sym}(L,y_1,\dots,y_n)$. Then
\[
\Phi_*(\kappa_{z_i})=\Phi_*p_*[L_i]=p_*\Phi_*[L_i]=p_*[\Phi(L_i)]=p_*[L_i]=\kappa_{z_i}.
\]
\end{proof}

Let $\mathcal L$ be a virtual link. Fix a diagram $D^0=D^0_1\cup\cdots\cup D^0_n$ in a surface $F^0$ of minimal genus. Choose non-crossing points $z^0_i\in D^0_i$, $i=1,\dots,n$. Let $L^0=L^0_1,\cup\cdots\cup L^0_n\subset F^0\times(0,1)$ be a link such that $p(L^0)=D^0$. Let $y^0_i=p^{-1}(z^0_i)\cap L^0_i$, $i=1,\dots,n$.

Denote the sets of orbits by action of the symmetry groups by
\[
\pi^v_{D^0}(F,z^0_i)=\pi_1(F^0,z^0_i)/Sym(D^0,z^0_i),\quad i=1,\dots,n,
\]
and
\[
\pi^v_{D^0}(F,z^0_i,z^0_j)=\pi_1(F^0,z^0_i,z^0_j)/Sym(D^0,z^0_i,z^0_j),\quad i,j=1,\dots,n.
\]

\begin{definition}\label{def:virtual_homotopy_index}
Let $(D,F)\in\mathfrak L_{min}$ be a virtual link diagram. Let $L\subset F\times(0,1)$ be a link such that $p(L)=D$. Choose non-crossing points $z_i\in D_i$ and let $y_i\in L_i$ be such that $p(y_i)=z_i$, $i=1,\dots,n$.

Let $\Psi_D\colon F^0\times[0,1]\to F\times[0,1]$ be a diffeomorphism such that $\Psi_D(L^0_i)=L_i$ and $\Psi_D(y^0_i)=y_i$.

Let $v$ be a self-crossing of a component $D_i$. The \emph{homotopy index} $h^v_i(v)$ of the crossing $v$ is defined by the formula
\begin{equation}\label{eq:virtual_closed_homotopy_index}
h^v_i(v)=(\Psi_D)^{-1}_*h_{D,z_i}(v)=(\Psi_D)^{-1}_*[\hat D^+_{v,z_i}]\in\pi^v_{D^0}(F^0,z^0_i).
\end{equation}

Let $v\in\V(D)$ be a mixed crossing of component type $\tau(v)=(i,j)$. The \emph{homotopy index} $h^v_{ij}(v)$ of the crossing $v$ is defined by the formula
\begin{equation}\label{eq:virtual_mixed_homotopy_index}
h^v_{ij}(v)=(\Psi_D)^{-1}_*h_{D,z_i,z_j}(v)\in\pi^v_{D^0}(F^0,z^0_i,z^0_j).
\end{equation}
where the homotopy class $h_{D,z_i,z_j}(v)\in\pi_1(F,z_i,z_j)$ is defined by the formula~\eqref{eq:mixed_homotopy_type}.
\end{definition}

Note that by definition of the groups $\pi^v_{D^0}(F^0,z^0_i)$ and $\pi^v_{D^0}(F^0,z^0_i,z^0_j)$, the homotopy index is well defined.

\begin{theorem}\label{thm:universal_virtual_index}
Let $\mathcal L$ be a virtual link and $(F^0,D^0)\in\mathfrak L_{min}$. Let
\[
I^v=\bigsqcup_{i=1}^n \pi^v_{D^0}(F^0,z^0_i) \sqcup \bigsqcup_{i\ne j} \pi^v_{D^0}(F^0,z^0_i,z^0_j).
\]
For a diagram $(F,D)\in\mathfrak L_{min}$ define a map $\iota^v\colon\V(D)\to I^v$ by the formula
\[
\iota^v(v)=\left\{\begin{array}{cl}
h^v_i(v), & \tau(v)=(i,i),\\
h^v_{ij}(v), & \tau(v)=(i,j),\ i\ne j.
\end{array}\right.
\]
Then $\iota^v$ is the universal index on the diagram category $\mathfrak L_{min}$.
\end{theorem}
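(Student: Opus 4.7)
The plan is to parallel the proof of Theorem~\ref{thm:universal_index_tangles}, with the additional work concentrated in handling the diffeomorphism morphisms of $\mathfrak L_{min}$, which by Theorem~\ref{thm:Kuperberg} are controlled precisely by the symmetry groups $Sym(D^0,z^0_i)$ and $Sym(D^0,z^0_i,z^0_j)$.

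First, I would check that $\iota^v$ is well defined. The diffeomorphism $\Psi_D\colon F^0\times[0,1]\to F\times[0,1]$ exists by Theorem~\ref{thm:Kuperberg}, and, after composing with an isotopy along the components, may be arranged to send $y^0_i$ to $y_i$. Any two such choices $\Psi_D,\Psi'_D$ satisfy $\Psi_D^{-1}\circ\Psi'_D\in\widetilde{Sym}(L^0,y^0_1,\dots,y^0_n)$, so the resulting homotopy classes differ exactly by the action of an element of $Sym(D^0,z^0_i)$ (respectively $Sym(D^0,z^0_i,z^0_j)$); quotienting by this group removes the ambiguity.

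Second, I would verify axioms (I0) and (I2). For isotopies and Reidemeister moves $f\colon(F,D)\to(F,D')$ in a fixed surface one may use a common $\Psi_D=\Psi_{D'}$, reducing the required invariance to the corresponding invariance of $h_{D,z_i}$ and $h_{D,z_i,z_j}$ established in Section~\ref{sect:tribes_tangles}. For a diffeomorphism morphism $\Phi\colon(F,D)\to(F',D')$, set $\Psi_{D'}:=\Phi\circ\Psi_D$; then
\begin{equation*}
h^v_i(\Phi_*(v))=(\Phi\Psi_D)^{-1}_* h_{D',z'_i}(\Phi_*(v))=(\Psi_D)^{-1}_*\Phi^{-1}_*\Phi_* h_{D,z_i}(v)=h^v_i(v),
\end{equation*}
and the mixed case is analogous.

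Third, universality. Given an index $\iota$ on $\mathfrak L_{min}$ with values in $I$, I restrict it to the subcategory $\mathfrak T^0\subset\mathfrak L_{min}$ of diagrams in $F^0$ with only isotopy and Reidemeister-move morphisms. Theorem~\ref{thm:universal_index_tangles} supplies a unique factorization $\iota|_{\mathfrak T^0}=\psi^u\circ\iota^u$ through a map $\psi^u\colon I^u\to I$. Because $IM_i(D^0,z^0_i)\subset Sym(D^0,z^0_i)$ by Proposition~\ref{prop:symmetry_group_property}(2), the target of $\iota^u$ surjects naturally onto $\pi^v_{D^0}(F^0,z^0_i)$. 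For any representative $\alpha\in\pi_1(F^0,z^0_i)$ of an orbit, the sprout construction from Corollary~\ref{cor:long_universal_index} and Proposition~\ref{prop:based_closed_universal_index} produces a diagram $\tilde D\in\mathfrak T^0$ containing a crossing $w$ with homotopy type $\alpha$. For $\phi\in Sym(D^0,z^0_i)$ realised as $\Phi_*$ with $\Phi\in\widetilde{Sym}(L^0,y^0_1,\dots,y^0_n)$, the diffeomorphism $\Phi$ is a morphism in $\mathfrak L_{min}$ carrying $w$ to a crossing of homotopy type $\phi(\alpha)$; property (I0) then forces $\psi^u(\alpha)=\psi^u(\phi(\alpha))$. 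Hence $\psi^u$ descends to $\psi\colon I^v\to I$, and the identity $\iota=\psi\circ\iota^v$ together with the uniqueness of $\psi$ follow from surjectivity of $\iota^v$ onto $I^v$ (again via sprouts). The mixed case uses $Sym(D^0,z^0_i,z^0_j)$ and Proposition~\ref{prop:mixed_inner_monodromy_group} in the same way.

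The main obstacle is the well-definedness step in the universality argument: one must match the ambiguity quotiented in $\iota^v$ with the ambiguity forced by diffeomorphism morphisms — neither larger (which would break well-definedness of $\iota^v$) nor smaller (which would undermine universality). The containment that uses Kuperberg's theorem most essentially is the lifting of an element of $Sym$ to an actual morphism in $\mathfrak L_{min}$, since only via such a genuine morphism can one invoke axiom (I0) to identify the two values of $\psi^u$.
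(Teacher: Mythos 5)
Your overall strategy coincides with the paper's: well-definedness of the homotopy index via the $Sym$-quotient, verification of (I0) for diffeomorphism morphisms by setting $\Psi_{D'}=\Phi\circ\Psi_D$, and descent of the factorization map supplied by the fixed-surface theory to $\psi\colon I^v\to I$ by realizing each $\phi\in Sym(D^0,z^0_i)$ as an actual diffeomorphism morphism acting on a sprout crossing and invoking (I0). These steps are correct and match the paper (which cites Propositions~\ref{prop:universal_closed_index} and~\ref{prop:index_and_tribes} directly rather than restricting Theorem~\ref{thm:universal_index_tangles} to a subcategory, but that is the same content).

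The one genuine gap is your last step: the identity $\iota=\psi\circ\iota^v$ does \emph{not} follow from surjectivity of $\iota^v$ --- surjectivity only yields uniqueness of $\psi$. Your construction establishes the factorization only on the subcategory of diagrams in $F^0$ reachable from $D^0$ by isotopies and Reidemeister moves; for a crossing $v$ of an arbitrary $(F,D)\in\mathfrak L_{min}$ you must still produce a crossing over $F^0$ carrying both the same $\iota$-value and the same $\iota^v$-value, and this cannot be done by pulling $v$ back along a morphism (crossings are not preserved by Reidemeister moves). The paper closes this with an explicit forward transport: set $\alpha=(\Psi_D)^{-1}_*h_{D,z_i}(v)$, build a sprout crossing $w$ over $F^0$ with based-half class $\alpha$, push it forward by $\Psi_D$ to a crossing $w'$ in a sprouted copy $D'$ of $D$, observe that $h_{D',z_i}(w')=h_{D',z_i}(v)$ so that $v\sim w'$ by Theorem~\ref{thm:main_theorem} applied in the surface $F$, and conclude $\iota(v)=\iota(w')=\iota(w)=\psi(\iota^v(v))$ via Proposition~\ref{prop:index_and_tribes} and (I0). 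All the ingredients for this argument are already present in your proposal, so the repair is routine, but as written the final identity is asserted rather than proved.
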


\begin{proof}
The index properties for $\iota^v$ for isotopies and Reidemeister moves follow from the corresponding properties of the homotopy type. The property (I0) for diffeomorphisms follows from the definition of the homotopy index $h^v$.

Let us prove that $\iota^v$ is universal. Let $\iota$ be an index with coefficients in a set $I$ on the diagram category $\mathfrak L_{min}$. We need to construct a map $\phi\colon I^v\to I$ such that $\iota(v)=\psi(\iota^v(v))$ for any crossing $v\in\V(D)$, $(F,D)\in\mathfrak L_{min}$.

Firstly, consider the case of self-crossings of an $i$-th component. Let $\alpha\in\pi_1(F^0,z^0_i)$. Pull a sprout from an arc near $z^0_i$ along a representative path for $\alpha$ and get a diagram ${D^0}'$ with a self-crossing $v\in\V(D')$ of the $i$-th component such that $[\widehat{({D^0}')}^+_{v,z^0_i}]=\alpha$ (Fig.~\ref{pic:test_crossing}). Note that the diagram ${D^0}'$ is obtained from $D^0$ by increasing second Reidemeister moves. Set $\psi(\alpha)=\iota(v)$.

\begin{figure}[h]
\centering\includegraphics[width=0.5\textwidth]{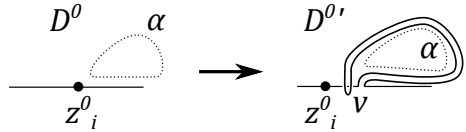}
\caption{Construction of the crossing $v$}\label{pic:test_crossing}
\end{figure}

Let us show that $\psi$ is well defined. If $w$ another crossing such that $[\hat D^+_{w,z^0_i}]=\alpha$ then $h_i(v)=h_i(w)\in\hat\pi_{D^0}(F^0,z^0_i)$, hence, $\iota(v)=\iota(w)$ by Proposition~\ref{prop:universal_closed_index}.

Let ${L^0}'$ be a link which corresponds to the diagram ${D^0}'$. We can suppose that the link ${L^0}'$ is obtained from $L^0$ by pulling a sprout from $y^0_i$ to a close neighbourhood of $y^0_i$.

Let $\Phi\in\widetilde{Sym}(L^0,y^0_1,\dots,y^0_n)$. Since $\Phi(y^0_i)=y^0_i$, the link $\Phi({L^0}')$ differ from $\Phi(L^0)=L^0$ by a sprout, and the diagram ${D^0}''=\Phi({D^0}')$ differs by a sprout from $D^0$. The diffeomorphism $\Phi$ maps the crossing $v$ to a crossing $w=\Phi_*(v)\in\V({D^0}'')$. The homotopy class of the based half of the crossing $w$ is equal to $[\widehat{({D^0}'')}^+_{w,z^0_i}]=\Phi_*([\widehat{({D^0}')}^+_{v,z^0_i}])=\Phi_*(\alpha)$.
On the other hand, by the property (I0), $\iota(w)=\iota(v)$. Thus, $\psi(\Phi_*(\alpha))=\psi(\alpha)$, and $\psi$ is well defined.

Let $(F,D)\in\mathfrak L_{min}$ and $v\in\V(D)$. Let $L\subset F\times(0,1)$ be a link such that $p(L)=D$ and $\Psi_D$ be a diffeomorphism which maps $L^0$ to $L$. Let $\alpha=\Psi^{-1}_*([\hat D^+_{v,z_i}])\in\pi_1(F^0, z^0_i)$. By definition the orbit of $\alpha$ in $\pi^v_{D^0}(F^0,z^0_i)$ is the homotopy index $h^v_i(v)$.

Create a crossing $w\in\V({D^0}')$ such that $[\widehat{({D^0}')}^+_{w,z^0_i}]=\alpha$ as before. Let $D'=p(\Psi({L^0}'))$. The diagram $D'$ differs from $D$ by a sprout. Let $w'=\Psi_*(w)\in\V(D')$. By construction $h_{D',z_i}(w')=\Psi_*(\alpha)=h_{D,z_i}(v)=h_{D',z_i}(v)$. Then $w'$ and $v$ belong to one tribe in $D'$ and by Proposition~\ref{prop:index_and_tribes} and the property (I0) $\iota(v)=\iota(w')=\iota(w)=\psi(\iota^v(v))$.
\end{proof}

\begin{example}\label{exa:virtual_link_torus}
Consider the link $D=D_1\cup D_2$ in the torus in Fig.~\ref{pic:motion_group_example}. The link is not classical, so the surface genus is minimal.

\begin{figure}[h]
\centering\includegraphics[width=0.2\textwidth]{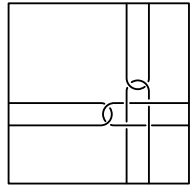}
\caption{A link in the torus}\label{pic:motion_group_example}
\end{figure}

The homotopy group of the torus does not depend on the base point and is equal to $\pi_1(T^2,z)=H_1(T^2,\Z)=\Z^2$.
The symmetry groups are $Sym_1(D)=Sym_2(D)=Sym_{12}(D)=Sym_{21}(D)=\Z_2$ (the nontrivial element of the groups acts like $x\mapsto -x$ on $\pi_1(T^2)=\Z^2$). Thus, $\pi^v_D=\Z_2^2$. That means there are 16 types of the crossings in diagrams in the torus of the virtual link. The crossings in Fig.~\ref{pic:motion_group_example} present 4 different types (the crossings with the same component type have the same index value).
\end{example}

\begin{remark}\label{rem:virtual_link_invariants}
Although we have described the index only for the subcategory $\mathfrak L_{min}$ in the category $\mathfrak L$ of all diagrams of the virtual link, this is enough to calculate link invariants which use an index in their construction. Indeed, if $Inv$ is an invariant (e.g. an index polynomial) which relies on an index $\iota$ with coefficients in a set $I$ on the diagrams of a link $\mathcal L$, then $Inv$ can be calculated on a diagram $(F,D)$ of the minimal genus. For such a diagram, $\iota=\psi\circ\iota^v$ for some $\psi\colon I^v\to I$, hence, the invariant $Inv$ is expressed by the index $\iota^v$, i.e. by the component index $\tau$ and homotopy index $h^v$.
\end{remark}

\subsection{Flat knots}

A \emph{flat link diagram} is a pair $(F,D)$ where $F$ is an oriented closed connected surface and $D\subset F$ is an embedded $4$-valent graph. A \emph{flat link} is an equivalence class of flat link diagrams modulo isotopies of the surface, Reidemeister moves, orientation preserving diffeomorphisms of pairs $(F,D)$ and (de)stabilizations. We assume that the components of a flat link are numbered and oriented.

An analogue of Kuperberg's theorem is the following statement~\cite{Fr} (see also~\cite{IMN_mon} for the knot case).

\begin{theorem}\label{thm:flat_Kuperberg}
  Let $(F,D)$, $D=D_1\cup\cdots\cup D_n$,  and $(F',D')$, $D'=D'_1\cup\cdots\cup D'_n$, be representatives of a flat link $\mathcal L$ with $n$ components such that the genus of $F$ and $F'$ is minimal among all diagrams of $\mathcal L$. Then there is an orientation preserving diffeomorphism $\Phi\colon F\to F'$ such that for all $i=1,\dots,n$  the free loops $\Phi(D_i)$ and $D'_i$ are homotopic in $F'$.
\end{theorem}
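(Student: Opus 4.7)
The plan is to adapt Kuperberg's argument (Theorem~\ref{thm:Kuperberg}) to the flat setting, where equivalence is taken up to homotopy of the immersed curves rather than isotopy of a thickened link. This added flexibility actually simplifies several steps, since we can freely homotope components within their free homotopy classes without worrying about preserving under/overcrossings.

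First I would establish an intrinsic characterization of minimality: a representative $(F, D)$ of $\mathcal{L}$ has minimal genus if and only if no destabilization applies, which, after placing each $D_i$ in minimal position on $F$ (minimizing self- and mutual intersection numbers within its free homotopy class), translates into the condition that no simple closed curve $c \subset F$ disjoint from $\bigcup D_i$ is non-separating, and no such separating $c$ bounds a positive-genus subsurface disjoint from all $D_i$. In other words, at minimal genus every handle of $F$ is essentially ``used'' by at least one component.

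Next I would show that a minimal representative can be reconstructed canonically from the homotopy-theoretic data, namely the conjugacy classes $[D_i] \in \pi_1(F)$ together with the combinatorial intersection pattern of the minimal-position realization. The key input is the theory of characteristic subsurfaces (a version of Scott's compact core theorem for surfaces with marked conjugacy classes): the smallest subsurface carrying a given collection of free homotopy classes of loops is unique up to orientation-preserving homeomorphism respecting each class. Hence, given two minimal representatives $(F, D)$ and $(F', D')$ of the same flat link $\mathcal{L}$, both canonically identify with this characteristic subsurface, yielding a diffeomorphism $\Phi \colon F \to F'$ with $\Phi(D_i)$ freely homotopic to $D'_i$ for every $i$. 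Orientation-preservation of $\Phi$ follows because $(F, D)$ and $(F', D')$ are connected by a zigzag of (de)stabilizations, all of which are orientation-preserving by definition, so the induced identification of characteristic subsurfaces is orientation-preserving.

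The hard part will be establishing the uniqueness statement in the second step for multi-component flat links. In the knot case ($n = 1$), treated in~\cite{IMN_mon}, one appeals directly to the classification of cyclic subgroups of surface groups and the associated annular covers. For $n \geq 2$, the conjugacy classes $[D_i]$ can interact in complicated ways --- sharing subsurfaces, forcing particular handle attachments, and so on --- and proving that a ``smallest joint carrier'' exists and is unique requires an innermost-disk / simultaneous compression argument, tracking how destabilizations in different regions can be commuted past each other. This combinatorial bookkeeping is where the genuine technical content lies and is precisely the step worked out in~\cite{Fr}.
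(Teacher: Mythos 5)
The paper does not actually prove Theorem~\ref{thm:flat_Kuperberg}: it is imported verbatim from Freund's thesis~\cite{Fr} (with~\cite{IMN_mon} cited for the knot case), exactly as Theorem~\ref{thm:Kuperberg} is imported from~\cite{Kuperberg}. So there is no in-paper argument to compare against; the only honest benchmark is whether your sketch would stand on its own as a proof. It does not, and you say so yourself: the "uniqueness of the smallest joint carrier" for $n\ge 2$ components is identified as "precisely the step worked out in~\cite{Fr}" and is left undone. Deferring the central technical content to the reference you are trying to re-prove is a genuine gap, not a stylistic omission --- everything before that point is comparatively soft.

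Two further points deserve care even at the level of the outline. First, your opening claim that a representative has minimal genus if and only if no destabilization applies (after putting the curves in minimal position) is itself a Kuperberg-type statement: a priori, minimal genus is a global infimum over the whole equivalence class, reached through zigzags of stabilizations \emph{and} destabilizations, while "no destabilization applies here" is a local property of one representative. Establishing that the local condition implies global minimality is essentially the content of the theorem, so assuming it up front is circular unless you supply an independent argument (in the orientable flat setting this can be extracted from minimal-position/geodesic-representative techniques, but it must be argued). Second, the orientation-preservation step is too quick: the moves defining flat link equivalence include orientation-preserving diffeomorphisms of pairs and (de)stabilizations, and the induced identification of two minimal carriers is only canonical up to the ambiguity of the characteristic-subsurface construction; one still has to check that some choice in that ambiguity class is orientation-preserving, which is where the hypothesis that $F$ and $F'$ are oriented and the moves preserve orientation actually enters.
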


Let $\mathcal L$ be a flat link. Consider the category $\mathfrak L_{min}$ whose objects are the diagrams $(F,D)$ of $\mathcal L$ of minimal genus, and the morphisms are compositions of isotopies, Reidemeister moves and diffeomorphisms. By Theorem~\ref{thm:flat_Kuperberg} any two diagrams $(D,F)$ and $(D',F')$ in $\mathfrak L_{min}$ are connected with a morphism of $\mathfrak L_{min}$.

We will define the universal index on the diagram category $\mathfrak L_{min}$.

\begin{definition}\label{def:flat_symmetry_group}
Let $(F,D)$, $D=D_1\cup\cdots\cup D_n$, be a flat link diagram. Consider the following subgroup in $Diff^+(F)$:
\[
\widetilde{Sym}^f(D)=\{\Phi\in Diff^+(F)\mid \Phi(D_i)\sim D_i,\ i=1,\dots,n \}.
\]

Choose non-crossing points $z_i\in D_i$, $i=1,\dots,n$. Let $\kappa_{z_i}=[D_i]\in\pi_1(F,z_i)$.

Let $\Phi\in\widetilde{Sym}^f(D)$. By definition, there exist paths $\gamma_i$ from $z_i$ to $\Phi(z_j)$ such that $\gamma_i\Phi(\kappa_{z_i})\gamma_i^{-1}=\kappa_{z_i}$. Note that the path $\gamma_i$ is defined up to multiplication by elements from $IM_i(D,z_i)$.  Denote the map
\[
\alpha\mapsto \gamma_i\Phi(\alpha)\gamma_i^{-1},\quad \alpha\in\pi_1(F,z_i),
\]
by $\Phi_{\gamma_i}\in Aut(\pi_1(F,z_i))$. The set
\[
Sym^f(D,z_i)=\{ \Phi_{\gamma_i} \mid \Phi\in\widetilde{Sym}^f(D),\ \gamma_i\Phi(\kappa_{z_i})\gamma_i^{-1}=\kappa_{z_i}\}\subset Aut(\pi_1(F,z_i))
\]
is called the \emph{flat symmetry group} of the component $D_i$.

Denote the map
\[
\alpha\mapsto \gamma_i\Phi(\alpha)\gamma_j^{-1},\quad \alpha\in\pi_1(F,z_i,z_j),
\]
by $\Phi_{\gamma_i,\gamma_j}\in Aut(\pi_1(F,z_i,z_j))$. The set
\[
Sym^f(D,z_i,z_j)=\{ \Phi_{\gamma_i,\gamma_j} \mid \Phi\in\widetilde{Sym}^f(D)\}\subset Aut(\pi_1(F,z_i))
\]
is called the \emph{flat symmetry group} of the components $D_i$, $D_j$.
\end{definition}

\begin{proposition}\label{prop:flat_symmetry_group_property}
Let $D=D_1\cup\cdots\cup D_n\in\mathfrak L_{min}$ be a flat link diagram and $z_i\in D_i$, $i=1,\dots,n$, be non-crossing points. Then
\begin{enumerate}
 \item The sets $Sym^f(D,z_i)$ and $Sym^f(D,z_i,z_j)$ are groups.
 \item For any other diagram $D'$ and a set of non-crossing points $z'_i\in D'_i$, $i=1,\dots,n$, we have $Sym^f(D,z_i)\simeq Sym^f(D',z'_i)$ and  $Sym^f(D,z_i,z_j)\simeq Sym^f(D',z'_i,z'_j)$ for all $1\le i,j\le n$.
 \item Let $\gamma\in IM_i(D,z_i)$. Then $Ad_\gamma\in Sym^f(D,z_i)$ where $Ad_\gamma(\alpha)=\gamma\alpha\gamma^{-1}$, $\alpha\in\pi_1(F,z_i)$, is the adjoint action.
 \item Let $(\gamma_i,\gamma_j)\in IM_{ij}(D,z_i,z_j)$. Then $Ad_{\gamma_i,\gamma_j}\in Sym^f(D,z_i,z_j)$ where $Ad_{\gamma_i,\gamma_j}(\alpha)=\gamma_i\alpha\gamma_j^{-1}$, $\alpha\in\pi_1(F,z_i,z_j)$.
 \item For any $\phi\in Sym^f(D,z_i)$ we have $\phi(\kappa_{z_i})=\kappa_{z_i}$.
\end{enumerate}
\end{proposition}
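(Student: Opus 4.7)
The plan is to verify the five assertions by transport of structure, leaning on Theorem~\ref{thm:flat_Kuperberg} for the first two and on the already-established properties of the inner monodromy groups (Propositions~\ref{prop:inner_monodromy_group} and~\ref{prop:mixed_inner_monodromy_group}) for the last three.

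For (1) I would exhibit the group law on $Sym^f(D,z_i)$ explicitly. Given $\Phi,\Phi'\in\widetilde{Sym}^f(D)$ with admissible paths $\gamma_i$ from $z_i$ to $\Phi(z_i)$ and $\gamma'_i$ from $z_i$ to $\Phi'(z_i)$, I would form the concatenation $\tilde\gamma_i=\gamma_i\cdot\Phi(\gamma'_i)$, a path from $z_i$ via $\Phi(z_i)$ to $\Phi\Phi'(z_i)$, and check by direct substitution that $(\Phi\Phi')_{\tilde\gamma_i}=\Phi_{\gamma_i}\circ\Phi'_{\gamma'_i}$, verifying along the way that $\tilde\gamma_i\,\Phi\Phi'(\kappa_{z_i})\,\tilde\gamma_i^{-1}=\kappa_{z_i}$ by applying $\Phi$ to the commutation condition for $\gamma'_i$ and then using the one for $\gamma_i$. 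For inverses one takes $\Phi^{-1}(\gamma_i^{-1})$. The mixed case is treated by the same computation with pairs $(\gamma_i,\gamma_j)$.

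Claim (2) I would split in two. Theorem~\ref{thm:flat_Kuperberg} supplies an orientation-preserving diffeomorphism $\Psi\colon F\to F'$ with $\Psi(D_i)$ freely homotopic to $D'_i$ for all $i$; conjugation $\Phi\mapsto\Psi\Phi\Psi^{-1}$ carries $\widetilde{Sym}^f(D)$ isomorphically onto $\widetilde{Sym}^f(D')$ because diffeomorphisms preserve free homotopy classes, and it descends to an isomorphism of the flat symmetry groups after fixing auxiliary connecting paths from $\Psi(z_i)$ to $z'_i$. A basepoint change $z_i\leadsto z'_i$ inside the same surface is handled by the standard path-induced isomorphism of fundamental groups, and an analogous argument gives the mixed version.

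Claims (3) and (4) are obtained by the trivial choice $\Phi=\mathrm{id}_F\in\widetilde{Sym}^f(D)$. Given $\gamma\in IM_i(D,z_i)$, part~3 of Proposition~\ref{prop:inner_monodromy_group} gives $\gamma\in C(\kappa_{z_i})$, so $\gamma\kappa_{z_i}\gamma^{-1}=\kappa_{z_i}$ and $\gamma$ is an admissible path for $\mathrm{id}$; hence $(\mathrm{id})_\gamma=Ad_\gamma$ lies in $Sym^f(D,z_i)$. The mixed version is identical, applying the projections $p_1,p_2$ in Proposition~\ref{prop:mixed_inner_monodromy_group}(3) to push $(\gamma_i,\gamma_j)\in IM_{ij}(D,z_i,z_j)$ into $C(\kappa_{z_i})\times C(\kappa_{z_j})$. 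Finally, (5) is a direct reading of the definition: if $\phi=\Phi_{\gamma_i}$, then $\gamma_i\Phi(\kappa_{z_i})\gamma_i^{-1}=\kappa_{z_i}$ is literally $\phi(\kappa_{z_i})=\kappa_{z_i}$.

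The one point that needs genuine bookkeeping is the compatibility threading through (1)--(4): the element $\Phi_{\gamma_i}$ depends on the auxiliary choice of $\gamma_i$, which is pinned down only up to $IM_i(D,z_i)$, and one must check that the composition formula above is insensitive to this freedom. This is exactly what assertion (3) provides --- adjoint actions by $IM_i(D,z_i)$ already belong to $Sym^f(D,z_i)$ --- so the argument closes up on itself once (3) is in hand.
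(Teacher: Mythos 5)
Your proof is correct and follows essentially the same route as the paper's: the same composition formula $(\Phi\circ\Phi')_{\gamma_i\Phi(\gamma'_i)}$ and inverse $(\Phi^{-1})_{\Phi^{-1}(\gamma_i^{-1})}$ for (1), the same two-step reduction for (2) (conjugation by a diffeomorphism from Theorem~\ref{thm:flat_Kuperberg}, then a basepoint/homotopy adjustment inside one surface), and (5) read off the definition. The only cosmetic difference is in (3)--(4): the paper realizes $Ad_\gamma$ as $(H_1)_1$ for an isotopy dragging $z_i$ along $\gamma^{-1}$, whereas you take $\Phi=\mathrm{id}$ with auxiliary path $\gamma$, admissible because $IM_i(D,z_i)\subset C(\kappa_{z_i})$; the two arguments are equivalent.
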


\begin{proof}
1. Let $\Phi_{\gamma_i}, \Phi'_{\gamma'_i}\in Sym^f(D,z_i)$. Then
\[
\Phi_{\gamma_i}\circ \Phi'_{\gamma'_i}=(\Phi\circ\Phi')_{\gamma_i\Phi(\gamma_i')}\in Sym^f(D,z_i)
\]
and $(\Phi_{\gamma_i})^{-1}=(\Phi^{-1})_{\Phi^{-1}(\gamma_i^{-1})}$.

2. Assume first that $D'\subset F'$ and there exists a diffeomorphism $\Psi\colon F\to F'$ such that $D'_i=\Psi(D_i)$ and $\Psi(z_i)=z'_i$, $i=1,\dots,n$. Then the map
\[
\Phi\mapsto \Psi\circ\Phi\circ\Psi^{-1}
\] identifies the groups $\widetilde{Sym}^f(D)$ and $\widetilde{Sym}^f(D')$, and the map
\[
\Phi_{\gamma_i}\mapsto (\Psi\circ\Phi\circ\Psi^{-1})_{\Psi(\gamma_i)}
\]
identifies the groups $Sym^f(D,z_i)$ and $Sym^f(D',z'_i)$.

Now, let $D'\subset F$ and $D'_i\sim D_i$, $i=1,\dots,n$. Choose paths $\beta_i$ from $z_i$ to $z'_i$ such that $\beta_i\kappa'_{z'_i}\beta_i^{-1}=\kappa_{z_i}$. Let $H_t\colon F\to F$, $t\in[0,1]$ be an isotopy which moves the points $z_i$ to $z'_i$ along the paths $\beta_i$. Then $H_1(\kappa_{z_i})=\kappa'_{z'_i}\in\pi_1(F,z'_i)$. The isomorphism between $Sym^f(D,z_i)$ and $Sym^f(D',z'_i)$ is given by the formula $\Phi_{\gamma_i}\mapsto (H_1\circ\Phi\circ H_1^{-1})_{H_1(\gamma_i)}$.

The isomorphisms $Sym^f(D,z_i,z_j)\simeq Sym^f(D',z'_i,z'_j)$ are proved analogously.

3. Let $\gamma\in IM_i(D,z_i)$. Consider an isotopy $H_t$, $t\in[0,1]$, which pulls the point $z_i$ along the path $\gamma^{-1}$. Then $Ad_{\gamma}=(H_1)_1$ where $1\in\pi_1(F,z_i)$ is the trivial loop.

4. The fourth statement is proved analogously to the previous one.

5. The last statement holds by definition.
\end{proof}

Let $\mathcal L$ be a flat link. Fix a diagram $(F^0,D^0)\in\mathfrak L_{min}$, $D^0=D^0_1\cup\cdots\cup D^0_n$. Choose non-crossing points $z^0_i\in D^0_i$, $i=1,\dots,n$. Let $\kappa^0_{z^0_i}=[D^0_i]\in\pi_1(F^0,z^0_i)$.

Denote the sets of orbits by action of the symmetry groups by
\[
\pi^f_{D^0}(F,z^0_i)=\pi_1(F^0,z^0_i)/Sym^f(D^0,z^0_i)/\sigma,\quad i=1,\dots,n,
\]
and
\[
\pi^f_{D^0}(F,z^0_i,z^0_j)=\pi_1(F^0,z^0_i,z^0_j)/Sym^f(D^0,z^0_i,z^0_j),\quad i,j=1,\dots,n.
\]
The involution $\sigma$ on $\pi_1(F^0,z^0_i)$ is defined by the formula $\sigma(\alpha)=\kappa^0_{z^0_i}\alpha^{-1}$.

\begin{definition}\label{def:flat_homotopy_index}
Let $(D,F)\in\mathfrak L_{min}$ be a flat link diagram. Choose non-crossing points $z_i\in D_i$, $i=1,\dots,n$. Let $\kappa_{z_i}=[D_i]\in\pi_1(F,z_i)$.

By Theorem~\ref{thm:flat_Kuperberg} there exist a diffeomorphism $\Psi_D\colon F^0\to F$ and paths $\gamma_i$ from $z_i$ to $\Psi_D(z^0_i)$   such that $\gamma_i^{-1}\kappa_{z_i}\gamma_i=(\Psi_D)_*(\kappa^0_{z^0_i})\in\pi_1(F,\Psi_D(z^0_i))$.

Let $v$ be a self-crossing of a component $D_i$. The \emph{flat homotopy index} $h^f_i(v)$ of the crossing $v$ is defined by the formula
\begin{equation}\label{eq:flat_closed_homotopy_index}
h^f_i(v)=(\Psi_D)_{\gamma_i}^{-1}h_{D,z_i}(v)=(\Psi_D)^{-1}_{\gamma_i}[\hat D^+_{v,z_i}]\in\pi^f_{D^0}(F^0,z^0_i).
\end{equation}
where $(\Psi_D)_{\gamma_i}\colon \pi_1(F^0,z^0_i)\to \pi_1(F,z_i)$ is given by the formula
\[\alpha\mapsto \gamma_i\cdot (\Psi_D)_*(\alpha)\cdot\gamma_i^{-1}.
\]

Let $v\in\V(D)$ be a mixed crossing of component type $\tau(v)=(i,j)$. The \emph{homotopy index} $h^v_{ij}(v)$ of the crossing $v$ is defined by the formula
\begin{equation}\label{eq:flat_mixed_homotopy_index}
h^f_{ij}(v)=(\Psi_D)^{-1}_{\gamma_i,\gamma_j} h_{D,z_i,z_j}(v)\in\pi^v_{D^0}(F^0,z^0_i,z^0_j).
\end{equation}
where the homotopy class $h_{D,z_i,z_j}(v)\in\pi_1(F,z_i,z_j)$ is defined by the formula~\eqref{eq:mixed_homotopy_type} and the map $(\Psi_D)_{\gamma_i,\gamma_j}$ is defined by the formula
\[
\alpha\mapsto \gamma_i\cdot (\Psi_D)_*(\alpha)\cdot\gamma_j^{-1}.
\]
\end{definition}

\begin{theorem}\label{thm:universal_flat_link_index}
Let $\mathcal L$ be a flat link and $(F^0,D^0)\in\mathfrak L_{min}$. Let
\[
I^f=\bigsqcup_{i=1}^n \pi^f_{D^0}(F^0,z^0_i) \sqcup \bigsqcup_{i\ne j} \pi^f_{D^0}(F^0,z^0_i,z^0_j).
\]
For a diagram $(F,D)\in\mathfrak L_{min}$ define a map $\iota^f\colon\V(D)\to I^f$ by the formula
\[
\iota^f(v)=\left\{\begin{array}{cl}
h^f_i(v), & \tau(v)=(i,i),\\
h^f_{ij}(v), & \tau(v)=(i,j),\ i\ne j.
\end{array}\right.
\]
Then $\iota^f$ is the universal index on the diagram category $\mathfrak L_{min}$.
\end{theorem}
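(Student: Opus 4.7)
The plan is to mirror the proof of Theorem~\ref{thm:universal_virtual_index} with three adaptations dictated by the flat setting: (i) use Theorem~\ref{thm:flat_Kuperberg} in place of Kuperberg's theorem to connect minimal-genus diagrams, (ii) replace the symmetry groups with their flat analogues from Definition~\ref{def:flat_symmetry_group}, and (iii) account for the involution $\sigma$ on self-crossing coefficients, which encodes the absence of a preferred half at a flat crossing.

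First I would verify that $\iota^f$ is well defined. Independence of $h^f_i(v)$ and $h^f_{ij}(v)$ from the choice of the diffeomorphism $\Psi_D$ and the connecting paths $\gamma_i,\gamma_j$ is by construction: any two admissible choices differ by an element of the flat symmetry group acting on $\pi_1(F^0,z^0_i)$ or $\pi_1(F^0,z^0_i,z^0_j)$, and Proposition~\ref{prop:flat_symmetry_group_property} ensures that the inner-monodromy ambiguity is absorbed as well. The additional quotient by $\sigma$ for self-crossings is needed because lifting a flat crossing to a tangle diagram by choosing an over/under yields one of the two classes $\delta^+_{v,z_i}$ or $\delta^-_{v,z_i}=\kappa_{z_i}(\delta^+_{v,z_i})^{-1}$ from Lemma~\ref{lem:homotopy_type_basepoint_change}, and these are exchanged by $\sigma$.

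Next I would check the axioms (I0) and (I2). Axiom (I2) and the portion of (I0) dealing with isotopies and Reidemeister moves descend from Theorem~\ref{thm:universal_flat_index}, since projection to the coarser quotient by $Sym^f$ only collapses fibres further. The genuinely new morphisms in $\mathfrak L_{min}$ are orientation-preserving diffeomorphisms $\Phi\colon(F,D)\to(F',D')$; for these, substituting $\Phi\circ\Psi_D$ for $\Psi_{D'}$ in~\eqref{eq:flat_closed_homotopy_index} and~\eqref{eq:flat_mixed_homotopy_index} leaves the orbit in $\pi^f_{D^0}$ invariant, so (I0) holds.

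For universality, given an index $\iota$ with coefficients in $I$, I would construct $\psi\colon I^f\to I$ by the sprout procedure of Theorem~\ref{thm:universal_virtual_index}: for each $\alpha\in\pi_1(F^0,z^0_i)$ produce a diagram ${D^0}'$ (obtained from $D^0$ by increasing second Reidemeister moves) carrying a self-crossing $v$ whose based half represents $\alpha$, and set $\psi(\bar\alpha)=\iota(v)$; mixed crossings are handled analogously. Well-definedness of $\psi$ then reduces to three checks: independence of the test crossing within a given homotopy class (Propositions~\ref{prop:universal_closed_index} and~\ref{prop:universal_mixed_index}), invariance under the flat symmetry group (applying any $\Phi\in\widetilde{Sym}^f(D^0)$ gives a diagram connected to $D^0$ in $\mathfrak L_{min}$ by Reidemeister moves realizing the component homotopies $\Phi(D^0_i)\sim D^0_i$, so (I0) forces equality of $\iota$-values), and invariance under $\sigma$ (the two possible tangle lifts of a flat sprout crossing have their halves swapped, producing the same flat datum). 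The identity $\iota=\psi\circ\iota^f$ on a general crossing then follows from the flat-tangle universality together with (I0) applied to the chosen diffeomorphism $\Psi_D$. The main obstacle I anticipate is the $Sym^f$-invariance step: since $\Phi\in\widetilde{Sym}^f(D^0)$ does not fix $D^0$ pointwise but only preserves free homotopy classes of components, one must compose $\Phi$ with homotopies taking $\Phi(D^0_i)$ back to $D^0_i$, realize these homotopies through a concrete sequence of Reidemeister moves, and track the image of the sprout crossing throughout, verifying that its resulting homotopy class is exactly $\Phi_{\gamma_i}(\alpha)$ as prescribed by Definition~\ref{def:flat_symmetry_group}.
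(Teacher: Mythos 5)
Your proposal is correct and follows essentially the same route as the paper, which proves this theorem by declaring it analogous to Theorem~\ref{thm:universal_virtual_index}; you supply exactly the adaptations that ``analogous'' is meant to cover (the flat Kuperberg theorem, the flat symmetry groups, and the involution $\sigma$), and you correctly flag the one genuine subtlety --- that $\Phi\in\widetilde{Sym}^f(D^0)$ only preserves components up to free homotopy, so the induced map on $\pi_1$ is only defined up to the inner monodromy ambiguity already absorbed in $Sym^f(D^0,z^0_i)$. No gaps.
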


The theorem is proved analogously to Theorem~\ref{thm:universal_virtual_index}.

\begin{example}
Consider the flat knot $K$ if a surface $F$ of genus $2$ presented in Fig.~\ref{pic:flat_knot_example}. The diagram has a $\Z_5$-symmetry, and any diffeomorphism $\Phi\in\widetilde{Sym}^f(K)$ is isotopic to a rotation of the diagram. The internal monodromy group is $IM(K)=C(\kappa)=\langle\kappa\rangle\subset \pi_1(F,z)$ where $\kappa=[K]$.
 \begin{figure}[h]
\centering\includegraphics[width=0.3\textwidth]{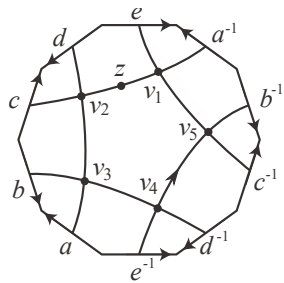} \caption{A flat knot in a surface of genus two}\label{pic:flat_knot_example}
 \end{figure}

The fundamental group has a presentation with generators $a,b,c,d,e$ (a generator is a loop which intersects the corresponding edge once and does not intersect other edges) and relations $abcde=1$ and $a^{-1}b^{-1}c^{-1}d^{-1}e^{-1}=1$. The class $\kappa$ is equal to $cebda$. The homotopy indices of the crossings are $h^f(v_1)=c$, $h^f(v_2)=a$, $h^f(v_3)=a^{-1}da$, $h^f(v_4)=a^{-1}d^{-1}bda$, $h^f(v_5)=cec^{-1}$.

Let $\Phi\in \widetilde{Sym}^f(K)$  be the counter clockwise rotation by the angle $\frac{2\pi}5$. Let $\gamma$ be the path in $K$ from $z$ to $\Phi(z)$ which has the opposite orientation to that of $K$. Then $\Phi_\gamma(a)=a^{-1}da$,  $\Phi_\gamma(b)=a^{-1}ea$,  $\Phi_\gamma(c)=a$,  $\Phi_\gamma(d)=a^{-1}ba$,  $\Phi_\gamma(e)=a^{-1}ca$.  Then $\Phi_\gamma(h^f(v_1))=h^f(v_2)$, $\Phi_\gamma(h^f(v_2))=h^f(v_3)$, $\Phi_\gamma(h^f(v_3))=h^f(v_4)$, $Ad_{\kappa}\Phi_\gamma(h^f(v_4))=h^f(v_5)$ and $\Phi_\gamma(h^f(v_5))=h^f(v_1)$. We see that the homotopy indices of all crossings coincide.
\end{example}

\end{document}